\numberwithin{equation}{section}
\newtheorem{theorem}{{\bf Theorem}}[section]
\newtheorem{proposition}{{\bf Proposition}}[section]
\newtheorem{definition}{{\bf Definition}}[section]
\newtheorem{lemma}{{\bf Lemma}}[section]
\newtheorem{corollary}{{\bf Corollary}}[section]
\newtheorem{remark}{{\bf Remark}}[section]
\begin{document}
\title[Correlation functions of charged free bosons and fermions]
{\Large Correlation functions of charged free boson and fermion systems}
\author{Naihuan Jing${^{1,2}}$, Zhijun Li$^{\dagger 1}$, Tommy Wuxing Cai$^{3}$}
\address{$^1$School of Mathematics, South China University of Technology, Guangzhou 510640, China}
\address{$^2$Department of Mathematics, North Carolina State University, Raleigh, NC 27695, USA}
\email{jing@ncsu.edu, zhijun1010@163.com, cait@myumanitoba.ca}
\address{$^3$Department of Mathematics, University of Manitoba, Winnipeg, MB R3T 2N2 Canada}
\thanks{{\scriptsize
\hskip -0.6 true cm MSC (2010): Primary: 17B37; Secondary: 58A17, 15A75, 15B33, 15A15, 05E05.
\newline Keywords: correlation functions, monodromy matrix, charged free bosons, tau functions, Schur functions, Schur's Q-functions,
\newline $^\dag$ Corresponding author: zhijun1010@163.com
\newline Supported by NSFC (grant no. 11531004), Simons Foundation (grant no. 523868).
}}

\maketitle
\begin{abstract} Using the idea of the quantum inverse scattering method, we
introduce the operators $\mathbf{B}(x), \mathbf{C}(x)$ and $\mathbf{\tilde{B}}(x), \mathbf{\tilde{C}}(x)$ corresponding to
the off-diagonal entries of the monodromy matrix $T$ for the phase model and $i$-boson model in terms of
bc fermions and neutral fermions respectively, thus giving alternative treatment of the KP and BKP hierarchies.
 We also introduce analogous operators $\mathbf{B}^{*}(x)$ and $\mathbf{C}^{*}(x)$ for the
charged free boson system and show that they are in complete analogy to those of $bc$ fermionic fields.
It is proved that the correlation function $\langle 0|\mathbf{C}(x_N)\cdots\mathbf{C}(x_1)\mathbf{B}(y_1)\cdots $ $\mathbf{B}(y_N)|0\rangle$ in the $bc$ fermionic fields is the inverse of the correlation function $\langle 0|\mathbf{C}^{*}(x_N)\cdots\mathbf{C}^{*}(x_1)\mathbf{B}^{*}(y_1)\cdots \mathbf{B}^{*}(y_N)|0\rangle$ in the charged free bosons.
\end{abstract}

\section{introduction}

The KP hierarchy is one of the fundamental examples of integrable systems that can be solved by many methods. Besides being
most interesting differential equations, the KP hierarchy and the KdV equation
are also one of the successful stories in Lie theory and mathematical physics. In a series
of papers \cite{DKM2, DJKM3, DJKM5, DJKM6, DJKM7},
the Kyoto school used infinite dimensional Lie algebras and the boson-fermion correspondence to study the KP and their
associated systems. They have shown that, among many things, the KP tau functions are expressed in terms of the Schur functions
from algebraic combinatorics
(see also \cite{OR2003}). Similarly the tau functions of the BKP hierarchy are determined by Schur's Q-functions \cite{You1990, Kv1990, Or2003, NO2017}.

The quantum inverse scattering method (QISM) \cite{FST1979} is
fundamental in studying various exactly solvable physical models
such as the XYZ model, six vertex model, eight vertex model, phase model, and lattice model etc. The most important aspect of the QISM is the algebraic Bethe Ansatz, which provides an effective
procedure to construct eigenvectors and calculate the eigenvalues for the Hamiltonian.

The main idea of the Bethe Ansatz in solving the phase model relies on two important operators
in the off-diagonals of the monodromy matrix. In the simplest situation, the monodromy matrix $T(x)$ is written as
\begin{equation}
T(x)=\begin{pmatrix} A(x) & B(x)\\ C(x) & D(x)\end{pmatrix}
\end{equation}
where the operators $B(x)$ and $C(x)$ act on the space of states $\mathcal V$ in the physical model. The $L$-matrix and therefore the monodromy matrices obey the famous RTT relation on $\mathcal V\otimes \mathcal V$:
\begin{align}\label{e:RLL}
R(x, y)L_1(x)L_2(y)=L_2(y)L_1(x)R(x, y),     
\end{align}
where the R-matrix satisfies the Yang-Baxter equation:
\begin{equation}
R(x, y)R(x, z)R(y, z)=R(y, z)R(x, z)R(x, y).
\end{equation}

Recently Bogoliubov \cite{Bog2005}, Foda, Wheeler and Zuparic \cite{Whe2012,FWZ2009,FWZ2009b, FWZ2009c}, Tsilevich\cite{Tsi2006} (see also \cite{BIK1997}) have established the relationship of the phase model and $i$-boson model with the KP and BKP hierarchies by the QISM and constructed
certain operators $\mathbb B(x),\mathbb C(x)$ and $\tilde{\mathbb B}(x),\tilde{\mathbb C}(x)$ using the
phase algebras and $i$-boson algebras. In the KP case with the R-matrix given in
\eqref{e:Rmat1}, they consider 
the $L$-matrix
$$\begin{matrix}L_m(x)=
 \begin{pmatrix}
x^{-\frac{1}{2}} & \psi^\dag_m \\
   \psi_m & x^{\frac{1}{2}}
\end{pmatrix}
\end{matrix}
$$
where $\psi_i, \psi_i^{\dag}$ obey the Heisenberg relation $[\psi_i,\psi^\dag_j]=\delta_{i,j}\pi_i$.
 Then the monodromy matrix
  $$ T(x)=L_M(x)\cdots L_1(x)L_0(x)=
 \begin{pmatrix}
   A(x) & B(x) \\
   C(x) & D(x)
  \end{pmatrix}$$
also satisfies the RTT equation \eqref{e:RLL} and
the operators
\begin{align}\label{in1}
\mathbb{B}(x)=x^{\frac{M}{2}}B(x), \text{and}~  \mathbb{C}(x)=x^{\frac{M}{2}}C(\frac{1}{x})
\end{align}
satisfy four distinguished properties \cite{Bog2005,FWZ2009,Whe2012} that can be used to characterize the KP system and compute the correlation functions. The exact meaning of the four properties will be presented in the sequel
(see \eqref{e:four}).

On one hand, the KP and its associate BKP can be formulated in the context of the boson-fermion correspondence (vertex operators realization
\cite{Jing1991,Jing1991a,KRR2013}) as special realizations of the basic representation of the affine Lie algebra \cite{Fr1981}, where the KP and BKP correspond the so-called bosonic/fermionic realization of the Lie algebra $\widehat{gl}_{\infty}$ and $\mathfrak{o}_\infty$\cite{DJKM6,
Kv1990, KRR2013}.

On the other hand, the boson-boson correspondence gives rise to interesting representations of the
$W_{1+\infty}$-algebra \cite{FKRW1995,Wang1998} including the Virasoro algebra
just as the boson-fermion correspondence. A bosonic KP hierarchy was proposed in \cite{Li2011}, and
in \cite{JL2019} we have shown there do not exist polynomial tau functions
in the bosonic system and computed some nontrivial tau functions in the completion of the Fock space of
the charged free bosons, which poses a question how to understand this new phenomenon.
The goal of this paper is to formulate the charged boson system using the idea of the QISM, specifically focusing on the operators that can be viewed as the off-diagonal entries of the monodromy matrix in the QISM and providing different perspective to understand the dynamic procedure.

In order to do this, we propose a QISM-like approach to the charged fermions by
generalizing the phase model in recognition that the atomic quadratic operators satisfy similar
bosonic relations. We define two new operators (see \eqref{B1}-\eqref{B4})
\begin{align*}
\mathbf{B}(x)=\mathop{\overrightarrow\prod}\limits_{i\in (-\infty,M]}\exp\left(xb(-(i-1))c(i-2)\right), \qquad
\mathbf{C}(x)=\mathop{\overleftarrow\prod}\limits_{i\in (-\infty,M]}\exp\left(xb(-i+2)c(i-1)\right),
\end{align*}
where the fermionic operators obey the commutation relation
\begin{align}
\{b(i),c(j)\}=\delta_{i,-j},~~~~\{b(i),b(j)\}=\{c(i),c(j)\}=0,
\end{align}
in $bc$ fermionic fields for any positive integer $M$.
 We show that these two operators
 satisfy similar commutation relations for the monodromy matrix using the Baker-Campbell-Hausdorff (BCH) formula. Namely, we shall prove that the operators $\mathbf{B}(x), \mathbf{C}(x)$ coming from the $bc$ fermionic fields enjoy the following properties:
\begin{enumerate}
 \item  $\left[\mathbf{B}(x),\mathbf{B}(y)\right]=[\mathbf{C}(x),\mathbf{C}(y)]=0$, and
 $\mathbf{B}(x_1)\cdots \mathbf{B}(x_N)|0\rangle$ is a symmetric function in $x_1,\cdots x_N$;
\item The limit of the operators are given by \begin{align}\label{e:four}
\lim_{M\rightarrow\infty}\mathbf{B}(x)=\exp\left(\sum^{\infty}_{n=1}\frac{x^n}{n}H_{-n}\right),~~\lim_{M\rightarrow\infty}\mathbf{C}(x)=\exp\left(\sum^{\infty}_{n=1}\frac{x^n}{n}H_n\right),
\end{align}
where $H_{-n},H_n$ satisfy the Heisenberg relation $[H_m, H_n]=m\delta_{m, -n}$ (see \eqref{bc8});
\item The Bethe eigenvector $\mathbf{B}(x_1)\cdots \mathbf{B}(x_N)|0\rangle$ is a KP tau function\footnote{In the paper the tau function is an algebraic tau function without bosonization.};
\item $\mathbf{B}(x)|\nu)=\sum\limits_{\nu\prec \mu\subseteq [l+1,M]}x^{|\mu|-|\nu|}|\mu)$ (see \eqref{bc7}).
\end{enumerate}
Then the correlation function $\langle 0|\mathbf{C}(x_N)\cdots\mathbf{C}(x_1)\mathbf{B}(y_1)\cdots $ $\mathbf{B}(y_N)|0\rangle$
can be computed (cf. \cite{FWZ2009,Whe2012}).

Our second main result is to formulate a QISM-like approach for the charged free bosonic system (or the bosonic $\beta\gamma$ system).
We will introduce two important operators (see \eqref{CB1}-\eqref{CB2})
\begin{align*}
\mathbf{B}^{*}(x)=\mathop{\overrightarrow\prod}\limits_{i\in (-\infty,M]}\exp\left(x\varphi_{-i}\varphi^*_{i-1}\right),\qquad \mathbf{C}^{*}(x)=\mathop{\overleftarrow\prod}\limits_{i\in (-\infty,M]}\exp\left(x\varphi_{-i+1}\varphi^*_{i}\right)
\end{align*}in terms of charged free bosons. Based on the BCH formula,  
we will show that\\
1. $[\mathbf{B}^{*}(x),\mathbf{B}^{*}(y)]=[\mathbf{C}^{*}(x),\mathbf{C}^{*}(y)]=0$, so $\mathbf{B}^{*}(x_1)\cdots\mathbf{B}^{*}(x_N)|0\rangle$
is a symmetric function;\\
2. For $M\rightarrow \infty$,
\begin{align}
\mathbf{B}^{*}(x)=\exp\left(\sum^{\infty}_{n=1}\frac{(-1)^{n+1}}{n}x^nh_{-n}\right),~~\mathbf{C}^{*}(x)=\exp\left(\sum^{\infty}_{n=1}\frac{(-1)^{n+1}}{n}x^nh_n\right),
\end{align}
where the $h_n$ satisfies the Heisenberg relation $[h_m, h_n]=-m\delta_{m, -n}$ (see \eqref{eq3});\\
3. The Bethe vector $\mathbf{B}^{*}(x_1)\cdots \mathbf{B}^{*}(x_N)|0\rangle$ is a bosonic KP tau function and expressible in Schur functions as well;\\
4. For any positive integer $M$, the correlation function $\langle 0|\mathbf{C}^{*}(x_N)\cdots\mathbf{C}^{*}(x_1)\mathbf{B}^{*}(y_1)\cdots \mathbf{B}^{*}(y_N)|0\rangle$ is equal to the inverse of $\langle 0|\mathbf{C}(x_N)\cdots\mathbf{C}(x_1)\mathbf{B}(y_1)\cdots \mathbf{B}(y_N)|0\rangle$.

While fulfilling the above two goals, we also obtain and prove several fundamental combinatorial identities such
as the Cauchy identity as well as combinatorial formulae for the Schur symmetric polynomials and Schur's Q-functions, which might
offer new insight on these symmetric functions.

\par The paper is organized as follows. In section 2 (with Appendix B), we formulate an alternative QISM approach to the charged free fermions
by introducing two operators $\mathbf{B}(x)$ and $\mathbf{C}(x)$ 
and deriving their important properties.
In particular, we give two commutative diagrams with
the operators $\mathbb{B}(x)$ and $\mathbb{C}(x)$ considered in \cite{Bog2005,FWZ2009,Whe2012}, which then gives a new method to
compute the correlation functions.
In section 3, we introduce the bosonic analog $\mathbf{B}^{*}(x)$ and $\mathbf{C}^{*}(x)$ in terms of
charged free bosons, and use them to compute correlation functions
$\langle 0|\mathbf{C}^{*}(x_N)\cdots\mathbf{C}^{*}(x_1)\mathbf{B}^{*}(y_1)\cdots \mathbf{B}^{*}(y_N)|0\rangle$
and relate them to their fermionic counterparts. In section 4 (with Appendix C), we define two operators $\mathbf{\tilde{B}}(x)$ and $\mathbf{\tilde{C}}(x)$ from neutral fermions and establish the connection with $\mathbb{\tilde{B}}(x)$ and $\mathbb{\tilde{C}}(x)$
as well as computing the correlation functions.

\section{$\mathbf{B}(x)$ and $\mathbf{C}(x)$ of $bc$ fermionic fields and correlation functions}\label{se1}
In this section we introduce operators $\mathbf{B}(x)$ and $\mathbf{C}(x)$ of the $bc$ fermionic fields and obtain their relations based on the Baker-Campbell-Hausdorff (BCH) formula. They correspond to the off-diagonal operators of the monodromy matrix in the phase model (cf. Appendix B).
\subsection{Operators $\mathbf{B}(x)$ and $\mathbf{C}(x)$}\label{sub1}
Let the $bc$ fermionic fields be
\begin{align}
b(z)=\sum_{i\in \mathbb{Z}}b(i)z^{-i},~~~~c(z)=\sum_{i\in \mathbb{Z}}c(i)z^{-i-1}
\end{align}
with the commutation relations
\begin{align}\label{bc1}
\{b(i),c(j)\}=\delta_{i,-j},~~~~\{b(i),b(j)\}=\{c(i),c(j)\}=0,
\end{align}
where $\{A,B\}=AB+BA$. In particular, $b^2(i)=c^2(i)=0$ for all $i\in \mathbb{Z}$.
\par For any fixed $M\in\mathbb N$ we define two operators $\mathbf{B}(x)$ and $\mathbf{C}(x)$ in terms of the $bc$ fermionic fields:
\begin{align}\notag
\label{B1}\mathbf{B}(x)&=\exp\left(xb(-M+1)c(M-2)\right)\exp\left(xb(-M+2)c(M-3)\right)\cdots\\
&=\mathop{\overrightarrow\prod}\limits_{i\in (-\infty,M]}\exp\left(xb(-(i-1))c(i-2)\right),\\
\label{B4}\mathbf{C}(x)&=\cdots \exp\left(xb(-M+3)c(M-2)\right)\exp\left(xb(-M+2)c(M-1)\right) \notag \\
&=\mathop{\overleftarrow\prod}\limits_{i\in (-\infty,M]}\exp\left(xb(-i+2)c(i-1)\right),
\end{align}
where the product $\mathop{\overrightarrow\prod}\limits_{i\in [a,M]}$ (resp.$\mathop{\overleftarrow\prod}\limits_{i\in [a,M]}$) runs from left to right (resp. right to left) as $i$ goes down from $M$ to $a$. Here we always use $a<M$ and if $a=-\infty, [a, M]=(-\infty, M]$.
\begin{remark} Though the operator $\mathbf{B}(x)$ (resp. $\mathbf{C}(x)$) involves an infinite sum,  its action on our concerned space $\mathcal{F}^{(0)}$ (resp. $\mathcal{F}^{*(0)}$) is finite (see Sect. \ref{sub2}).
\end{remark}
\begin{proposition}\label{pro1} For any fixed $M$, one has that
\begin{align}\label{bc9}
\mathbf{B}(x)=\exp\left(\sum^{\infty}_{n=1}\frac{1}{n}x^n\wedge_n\right),~~\mathbf{C}(x)=\exp\left(\sum^{\infty}_{n=1}\frac{1}{n}x^n\wedge^*_n\right),
\end{align}
where $\wedge_n=\sum^{M-1}_{-\infty}b(-i)c(i-n),~\wedge^*_n=\sum^{M-1}_{-\infty}b(-i+n)c(i).$
\end{proposition}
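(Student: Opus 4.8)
The plan is to absorb the exponential factors of \eqref{B1} one at a time, inducting on the parameter $M$. Write $\mathbf{B}_M(x)$ for the operator \eqref{B1} with parameter $M$, set $X_k=b(-k)c(k-1)$ and $X_k^{(n)}=b(-k)c(k-n)$, so that $\wedge_n=\wedge_n^{(M)}=\sum_{k\le M-1}X_k^{(n)}$ and the $k$-th factor of \eqref{B1} is $\exp(xX_k)$. Three preliminary facts follow routinely from \eqref{bc1}: (a) $X_k^{(n)}X_k^{(m)}=0$ (two copies of $b(-k)$ with a $c$ between them); in particular $X_k^2=0$, so $\exp(xX_k)=1+xX_k$, and on each $|\nu)\in\mathcal F^{(0)}$ only finitely many $X_k^{(n)}$ act nontrivially (the factor $c(k-n)$ annihilates $|\nu)$ once $k$ is small enough), so both sides of \eqref{bc9} are well defined on $\mathcal F^{(0)}$; (b) the $\widehat{gl}_\infty$-type bracket $[b(-i)c(i-n),b(-j)c(j-m)]=\delta_{j,i-n}\,b(-i)c(i-n-m)-\delta_{i,j-m}\,b(-j)c(j-n-m)$, which after summation over $i,j\le M-1$ gives $[\wedge_n,\wedge_m]=0$; (c) peeling off the leftmost factor, $\mathbf{B}_M(x)=(1+xX_{M-1})\,\mathbf{B}_{M-1}(x)$.

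The main step is the \emph{absorption identity}: if $\mathbf{B}_{M-1}(x)=e^{A}$ with $A=\sum_{n\ge1}\frac{x^n}{n}\wedge_n^{(M-1)}$, then $\mathbf{B}_M(x)=e^{A+B}$ with $B=\sum_{n\ge1}\frac{x^n}{n}X_{M-1}^{(n)}$; since $\wedge_n^{(M)}=\wedge_n^{(M-1)}+X_{M-1}^{(n)}$, this is precisely \eqref{bc9} for $M$. By (c) it suffices to prove the operator identity $e^{A+B}e^{-A}=1+xX_{M-1}$. The operators $X_{M-1}^{(p)}$, $p\ge1$, span an abelian subalgebra by (a), and by (b) one computes $[\wedge_n^{(M-1)},X_{M-1}^{(m)}]=-X_{M-1}^{(n+m)}$, so $\mathrm{ad}_A$ preserves this subalgebra (raising the index). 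A variation-of-constants argument applied to $\phi(t)=e^{t(A+B)}e^{-tA}$ — legitimate on $\mathcal F^{(0)}$ because the operators $e^{s\,\mathrm{ad}_A}B$ all lie in the abelian part and hence commute — gives $e^{A+B}e^{-A}=\exp\bigl(\frac{e^{\mathrm{ad}_A}-1}{\mathrm{ad}_A}B\bigr)$. Writing $\frac{e^{\mathrm{ad}_A}-1}{\mathrm{ad}_A}B=\sum_{k\ge0}\frac{1}{(k+1)!}(\mathrm{ad}_A)^kB$ and using $\sum_{j\ge1}t^j/j=-\log(1-t)$, the coefficient of $X_{M-1}^{(p)}$ comes out to be
\[
x^p\sum_{k\ge0}\frac{(-1)^k}{(k+1)!}\bigl[t^p\bigr]\bigl(-\log(1-t)\bigr)^{k+1}=x^p\bigl[t^p\bigr]\Bigl(1-e^{\log(1-t)}\Bigr)=x^p\bigl[t^p\bigr]\,t=x^p\,\delta_{p,1},
\]
so $\frac{e^{\mathrm{ad}_A}-1}{\mathrm{ad}_A}B=xX_{M-1}$ and, as $X_{M-1}^2=0$, $e^{A+B}e^{-A}=\exp(xX_{M-1})=1+xX_{M-1}$.

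To anchor the induction, fix $|\nu)\in\mathcal F^{(0)}$: for all $M$ sufficiently negative every $X_k$ with $k\le M-1$ and every $\wedge_n^{(M)}$ annihilates $|\nu)$ (the trailing $c(k-1)$, resp. $c(i-n)$, is then an annihilation operator applied to a finite-energy state), so both sides of \eqref{bc9} equal $|\nu)$; the recursion (c) then propagates the identity upward to all $M\in\mathbb Z$, in particular to all $M\in\mathbb N$. As $|\nu)$ was arbitrary, \eqref{bc9} holds on $\mathcal F^{(0)}$. The identity for $\mathbf{C}(x)$ is the mirror image: one has $\mathbf{C}_M(x)=\mathbf{C}_{M-1}(x)\exp(x\tilde X_{M-1})$ with $\tilde X_k=b(-k+1)c(k)$, the corresponding bracket carries the opposite sign, and the same coefficient computation gives $e^{-A'}e^{A'+B'}=1+x\tilde X_{M-1}$.

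I expect the delicate point to be the absorption identity — in particular, recognizing that the $X_{M-1}^{(p)}$ generate an abelian subalgebra stable under $\mathrm{ad}_A$ (which is exactly what legitimizes the variation-of-constants formula and confines all iterated commutators), and then identifying the alternating combination of the coefficients $\sum_{j_0+\cdots+j_k=p}(j_0\cdots j_k)^{-1}$ as the Taylor coefficients of $1-(1-t)=t$. The remaining ingredients — $X_k^2=0$, the $\widehat{gl}_\infty$ bracket, and the "annihilation at infinity" base case — are straightforward consequences of \eqref{bc1}.
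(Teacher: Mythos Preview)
Your argument is correct. Both proofs absorb one exponential factor at a time into a single exponential, but the execution differs in a way worth noting. The paper inducts downward on the lower cutoff $m$ of the finite product \eqref{bc5}, adding the new factor $\exp(xb(-k)c(k-1))$ on the \emph{right}; it then applies the full Dynkin form of BCH, observes that only the commutators $[X^{(s)},Y]$ survive, and identifies their coefficients via the Bernoulli-number identity $\sum_{j}C_jF^j_n=\tfrac1{n+1}$ of Lemma~\ref{le2}. You instead induct upward on $M$, adding the factor on the \emph{left}, and replace the full BCH expansion by the Duhamel/variation-of-constants formula $e^{A+B}e^{-A}=\exp\bigl(\frac{e^{\mathrm{ad}_A}-1}{\mathrm{ad}_A}B\bigr)$, which is valid precisely because $\mathrm{ad}_A$ maps $B$ into the abelian span of the $X_{M-1}^{(p)}$. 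This trades Lemma~\ref{le2} for the one-line identity $1-e^{\log(1-t)}=t$, so your route is a bit shorter and avoids Bernoulli numbers altogether; the paper's route, on the other hand, isolates Lemma~\ref{le2} as a reusable combinatorial fact and makes the connection to the classical BCH coefficients explicit (the same template is invoked again for the neutral-fermion operators in Section~4). Your base case is also handled slightly differently---you anchor at $M$ sufficiently negative on each fixed $|\nu)$ (using $b(-i)|\nu)=0$ for $i\le -l-1$), whereas the paper proves the finite-product identity \eqref{bc5} outright and then lets $m\to-\infty$---but both are sound.
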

\begin{proof} We divide the proof into two steps.

\textbf{Step 1.}
Let $X$ and $Y$ be operators on a Hilbert space. The \textit{Baker-Campbell-Hausdorff (BCH) formula}\cite[pp.162-173]{Jac1962}\cite{Tho1982} says that $\exp(X)\exp(Y)= \exp(Z)$, where $Z$ is a formal series in iterated commutators of $X$ and $Y$ with rational coefficients.
The first few terms are
\begin{align*}
Z=X+Y+\frac{1}{2}[X,Y]+\frac{1}{12}\left[X,[X,Y]\right]+\frac{1}{12}\left[[X,Y],Y\right]+\cdots
\end{align*}

Write $[X^{(n)},Y]$ as $[X,[X^{(n-1)},Y]]$ inductively, thus
$[X^{(3)},Y]=\left[X,[X,[X,Y]]\right]$.
Similarly, we denote $[X,Y^{(n)}]=[[X,Y^{(n-1)}],Y],~[X,Y^{(n)}X]=[[X,Y^{(n)}],X]$.
Let $C_n$ (resp. $D_n$) be the coefficients of $[X^{(n)},Y]$ (resp.$[X,Y^{(n)}]$) in $Z$, it is known that
\begin{align}\label{BCH1}
C_n=D_n=\frac{(-1)^{n}}{n!}B_{n},
\end{align}
where $B_{n}$ are the Bernoulli numbers\cite{Le1935} defined by
\begin{align}\label{BCH2}
\frac{z}{e^{z}-1}=\sum^{\infty}_{n=0}B_{n}\frac{z^n}{n!},~~~~|z|<2\pi.
\end{align}

Let $f(x)=\sum_{n\geq 1}\frac{1}{n}x^{n}=-\text{ln}(1-x)$ and consider the Taylor expansion of $(f(x))^{s}~(s\geq 1)$
\begin{align}\label{eq4}
\left(f(x)\right)^{s}=\sum_{t\in \mathbb{Z}_+}F^{s}_{t}x^{t},
\end{align}
then $F^{s}_{t}=0$ for $t<s$ and $F^{s}_{s}=1$.
We have the following technical lemma.
\begin{lemma}\label{le2}
The coefficients $C_{j}$ and $F^j_n$ satisfy the following identities:
\begin{align}\label{BCH5}
\sum^{n}_{j=1}C_{j}F^{j}_{n}=\frac{1}{n+1},\qquad\qquad \sum^{t}_{s=1}\frac{F^{s}_{t}}{s!}=1.
\end{align}
\end{lemma}
\begin{proof}
Setting $z=f(x)=-\text{ln}(1-x)$, we have
\begin{align}\label{BCH3}
\sum_{j\geq 1}C_{j}z^{j}=\frac{z}{1-e^{-z}}-1=-\frac{\text{ln}(1-x)}{x}-1=\sum_{n\geq 1}\frac{x^n}{n+1}
\end{align}
by combining (\ref{BCH1}) and (\ref{BCH2}).
And by (\ref{eq4}),
\begin{align}\label{BCH4}
\sum_{j\geq 1}C_{j}z^{j}=\sum_{j\geq 1}C_j\sum_{n\geq j}F^j_nx^n=\sum_{n\geq 1}x^n\sum^n_{j=1}C_jF^j_n.
\end{align}

Comparing (\ref{BCH3}) and (\ref{BCH4}), we get the first identity. 
The second one can be checked similarly. 
\end{proof}
\textbf{Step 2.} We claim that for $m\leq M-1$,
\begin{align}\label{bc5}
\mathop{\overrightarrow\prod}\limits_{i\in [m,M-1]}\exp\left(xb(-i)c(i-1)\right)
=\exp\left(\sum^{M-m}_{n=1}\frac{x^{n}}{n}\sum^{M-1}_{j=n+m-1}b(-j)c(j-n)\right).
\end{align}

We use induction on $m$. First, for $m=M-2$, set $X=xb(-M+1)c(M-2),~Y=xb(-M+2)c(M-3)$. As $[X, Y]$ commutes with $X$ and $Y$, so
\begin{align*}
Z=&X+Y+\frac{1}{2}[X,Y]
=\sum^{2}_{n=1}\frac{x^{n}}{n}\sum^{M-1}_{j=n+M-2-1}b(-j)c(j-n).
\end{align*}

Assume (\ref{bc5}) holds for $m=k+1$, set $X=\sum^{M-k-1}_{n=1}\frac{x^{n}}{n}\sum^{M-1}_{j=n+k}b(-j)c(j-n),~Y=xb(-k)c(k-1)$, then the only nontrivial iterated commutators of $X$ and $Y$ in $Z$ are
\begin{align}
[X^{(s)},Y]=\sum^{M-1-k-s}_{j=0}F^s_{s+j}x^{s+j+1}b(-k-s-j)c(k-1),~~~~~1\leq s\leq M-1-k,
\end{align}
so
\begin{align*}
Z&   
=X+Y+\sum^{M-1-k}_{s=1}C_s\sum^{M-1-k-s}_{j=0}F^s_{s+j}x^{s+j+1}b(-k-s-j)c(k-1)\\
&=X+Y+\sum^{M-1-k}_{t=1}\sum^t_{s=1}C_sF^s_tx^{t+1}b(-k-t)c(k-1)\\
&=X+Y+\sum^{M-1-k}_{t=1}\frac{x^{t+1}}{t+1}b(-k-t)c(k-1)\\
&=\sum^{M-k}_{n=1}\frac{x^{n}}{n}\sum^{M-1}_{j=n+k-1}b(-j)c(j-n),
\end{align*}
where we have used Lemma \ref{le2} (also recalled X and Y). Therefore \eqref{bc5} is true.
The proposition follows by taking $m\rightarrow -\infty$ in \eqref{bc5}.
\end{proof}
\begin{proposition} For fixed $M$, one has that
\begin{align}
\mathbf{B}(x)\mathbf{B}(y)=\mathbf{B}(y)\mathbf{B}(x),~~~~\mathbf{C}(x)\mathbf{C}(y)=\mathbf{C}(y)\mathbf{C}(x).
\end{align}
\end{proposition}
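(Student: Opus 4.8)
The plan is to reduce the statement, by means of Proposition \ref{pro1}, to a commutation relation among the exponents, and then to verify that relation directly from the fermion anticommutators \eqref{bc1}. By \eqref{bc9} we have $\mathbf{B}(x)=\exp\bigl(\sum_{n\ge1}\tfrac1n x^n\wedge_n\bigr)$ and $\mathbf{C}(x)=\exp\bigl(\sum_{n\ge1}\tfrac1n x^n\wedge_n^*\bigr)$, so it suffices to show $[\wedge_m,\wedge_n]=0$ and $[\wedge_m^*,\wedge_n^*]=0$ for all $m,n\ge1$: indeed, if the $\wedge_n$ pairwise commute then $\sum_n\tfrac1n x^n\wedge_n$ commutes with $\sum_n\tfrac1n y^n\wedge_n$, whence $\mathbf{B}(x)\mathbf{B}(y)=\mathbf{B}(y)\mathbf{B}(x)$, and similarly for $\mathbf{C}$.

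To compute these brackets I would first record the elementary identity for a commutator of two quadratic fermion monomials, which follows from \eqref{bc1} together with $\{b(i),b(j)\}=\{c(i),c(j)\}=0$:
\[
[b(p)c(q),\,b(r)c(s)]=\{c(q),b(r)\}\,b(p)c(s)-\{b(p),c(s)\}\,b(r)c(q).
\]
Taking $(p,q,r,s)=(-i,\,i-m,\,-j,\,j-n)$ gives $\{c(i-m),b(-j)\}=\delta_{j,i-m}$ and $\{b(-i),c(j-n)\}=\delta_{i,j-n}$, and summing over $i,j\le M-1$ yields
\[
[\wedge_m,\wedge_n]=\sum_{i\le M-1}b(-i)c(i-m-n)-\sum_{j\le M-1}b(-j)c(j-m-n)=0,
\]
the two sums cancelling because for $m,n\ge1$ the side conditions $i-m\le M-1$ and $j-n\le M-1$ are automatic. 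The computation for $\wedge_m^*$ is parallel: with $(p,q,r,s)=(-i+m,\,i,\,-j+n,\,j)$ one finds $[\wedge_m^*,\wedge_n^*]=\wedge_{m+n}^*-\wedge_{m+n}^*=0$ after the same index shift. Feeding these back into the exponentials gives both asserted identities.

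The one technical point is that $\wedge_n$ and $\wedge_n^*$ are infinite sums, so that the bracket manipulations above and the rearrangement of the product of exponential series need justification. This is supplied by the finiteness already mentioned in the Remark and detailed in Section \ref{sub2}: on any vector of $\mathcal{F}^{(0)}$ (resp. $\mathcal{F}^{*(0)}$) all but finitely many terms of each $\wedge_n$, and all but finitely many of the $\wedge_n$ themselves, act as zero, so every equality above may be checked on a finite-dimensional subspace where the formal computations are rigorous. I do not anticipate a genuine obstacle here; the substance of the argument is the telescoping cancellation of the two index sums, and the only thing requiring care is the sign bookkeeping in the anticommutators.
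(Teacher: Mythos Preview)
Your proof is correct and follows essentially the same approach as the paper: reduce via Proposition~\ref{pro1} to showing $[\wedge_m,\wedge_n]=0$ (and likewise for $\wedge_n^*$), then verify this by a direct commutator computation that telescopes. You supply more detail than the paper does---the explicit identity $[b(p)c(q),b(r)c(s)]=\{c(q),b(r)\}b(p)c(s)-\{b(p),c(s)\}b(r)c(q)$ and the remark on well-definedness of the infinite sums---but the substance is identical.
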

\begin{proof}
Since
\begin{align}\label{eq5}
[\wedge_m,\wedge_n]=\sum^{M-1}_{-\infty}b(-i)c(i-m-n)-\sum^{M-1}_{-\infty}b(-i)c(i-m-n)=\wedge_{m+n}-\wedge_{m+n}=0,
\end{align}
we have $[\sum^{\infty}_{n=1}\frac{1}{n}x^n\wedge_n,\sum^{\infty}_{n=1}\frac{1}{n}y^n\wedge_n]=0$, then
the commutation relations hold.
\end{proof}
\begin{proposition}
Setting $\lim\limits_{M\rightarrow \infty}\wedge_n=H_{-n},~\lim\limits_{M\rightarrow \infty}\wedge^*_n=H_n$, then we have that
\begin{align}\label{bc8}
[H_m,H_n]=m\delta_{m,-n}.
\end{align}
\end{proposition}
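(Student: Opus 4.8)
The plan is to reduce \eqref{bc8} to the single bracket $[H_{-m},H_n]$ with $m,n\ge 1$ and then to compute that bracket on the limiting operators, keeping careful track of the normal-ordering constant that produces the central term. To begin, observe that for $n\ne 0$ the summands of $\wedge_n$ and of $\wedge^*_n$ pairwise anticommute, since $\{b(-i),c(i-n)\}=\delta_{n,0}=0$; hence the limits
\[
H_{-n}=\sum_{i\in\mathbb Z}b(-i)c(i-n),\qquad H_{n}=\sum_{i\in\mathbb Z}b(n-i)c(i)\qquad(n\ge 1)
\]
are well-defined operators on $\mathcal{F}^{(0)}$, with only finitely many terms acting nontrivially on any given vector.

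Next, for brackets of the same sign I would repeat the argument that produced \eqref{eq5}: expanding by the bilinear commutator identity
\[
[ab,cd]=\{b,c\}ad-\{b,d\}ac+\{a,c\}db-\{a,d\}cb
\]
together with \eqref{bc1}, the double sum for $[H_{-m},H_{-n}]$ reduces, after a shift of the summation index, to $\sum_i b(-i)c(i-m-n)-\sum_i b(-i)c(i-m-n)=0$, and this shift is legitimate because the summand is normal ordered when $m+n\ne 0$; the same computation gives $[H_m,H_n]=0$. Together with the antisymmetry of the bracket, \eqref{bc8} is thus already verified whenever $m$ and $n$ have the same sign, and it remains only to treat $[H_{-m},H_n]$ with $m,n\ge 1$.

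The same expansion yields
\[
[H_{-m},H_n]=\sum_{i\in\mathbb Z}\bigl(b(-i)c(i+n-m)-b(n-i)c(i-m)\bigr),\qquad m,n\ge 1 .
\]
If $n\ne m$, each summand is already normal ordered, so the sum is an honest operator and the shift $i\mapsto i+n$ in the second sum makes it coincide with the first; hence $[H_{-m},H_n]=0=(-m)\delta_{-m,-n}$. If $n=m$, however, $\sum_i b(-i)c(i)$ fails to converge as an operator on $\mathcal{F}^{(0)}$ and the naive cancellation is illegitimate; one must first split $b(-i)c(i)={:}b(-i)c(i){:}+\langle 0|b(-i)c(i)|0\rangle$ and likewise $b(m-i)c(i-m)={:}b(m-i)c(i-m){:}+\langle 0|b(m-i)c(i-m)|0\rangle$. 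Using \eqref{bc1} and the vacuum conditions $b(i)|0\rangle=0$ for $i\ge 1$ and $c(i)|0\rangle=0$ for $i\ge 0$, one computes $\langle 0|b(-i)c(i)|0\rangle$ to be $1$ for $i<0$ and $0$ otherwise, and $\langle 0|b(m-i)c(i-m)|0\rangle$ to be $1$ for $i<m$ and $0$ otherwise. The normal-ordered pieces now do converge and cancel after $i\mapsto i+m$, while the constants contribute $\sum_{i\in\mathbb Z}\bigl(\langle 0|b(-i)c(i)|0\rangle-\langle 0|b(m-i)c(i-m)|0\rangle\bigr)=-m$. Therefore $[H_{-m},H_m]=-m=(-m)\delta_{-m,-m}$, and combining the three cases gives \eqref{bc8}.

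The main obstacle is precisely this last step: one must resist the temptation to write ``$\sum_i b(-i)c(i)-\sum_i b(-i)c(i)=0$'' and instead extract the central term from the vacuum expectation values. Put differently, the central extension is an artifact of the Dirac-sea structure of $|0\rangle$, which the truncated operators $\wedge_m$ (finite $M$) cannot detect --- indeed one checks that $\lim_{M\to\infty}[\wedge_m,\wedge^*_m]=0$ on $\mathcal{F}^{(0)}$ --- so the limit $M\to\infty$ must be taken before forming the commutator. (Alternatively, \eqref{bc8} is nothing but the standard Heisenberg realization inside the $bc$ fermionic fields provided by the boson-fermion correspondence; cf. \cite{KRR2013}.)
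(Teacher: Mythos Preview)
Your argument is correct. In the paper itself this proposition is stated without proof --- it is simply recorded as the standard Heisenberg relation for the $bc$ system and then used in Proposition~\ref{pro2} --- so there is no paper proof to compare against. What you have written supplies exactly the details one would expect: the same-sign brackets vanish by the index shift used for \eqref{eq5}, and for $[H_{-m},H_m]$ you correctly isolate the central term by splitting off the vacuum expectation values before shifting. Your closing observation that $\lim_{M\to\infty}[\wedge_m,\wedge^*_m]=0$ on $\mathcal F^{(0)}$, so that the limit must precede the bracket, is accurate and worth keeping; indeed for finite $M$ one finds $[\wedge_m,\wedge^*_m]=\sum_{i=M-m}^{M-1}b(-i)c(i)$, which kills any fixed vector once $M$ is large.

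One small wording issue: in your first paragraph the phrase ``the summands of $\wedge_n$ and of $\wedge^*_n$ pairwise anticommute'' is slightly misleading, since the summands $b(-i)c(i-n)$ are even and hence commute with one another. What you actually need (and what the computation $\{b(-i),c(i-n)\}=\delta_{n,0}$ shows) is that within each summand the two factors have vanishing anticommutator, so the bilinear is already normal ordered for $n\ne 0$ and the infinite sum acts by finitely many terms on any vector. This does not affect the validity of the proof.
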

Thus we have the following identities.
\begin{proposition}\label{pro2} The limit of the operators satisfy that
\begin{align}
&\label{bc10}\lim\limits_{M\rightarrow \infty}\mathbf{B}(x)=\exp\left(\sum^{\infty}_{n=1}\frac{x^n}{n}H_{-n}\right),~~\lim\limits_{M\rightarrow \infty}\mathbf{C}(x)=\exp\left(\sum^{\infty}_{n=1}\frac{x^n}{n}H_n\right),\\
&\label{bc6}\lim\limits_{M\rightarrow \infty}\mathbf{C}(x)\mathbf{B}(y)=\frac{1}{1-xy}\lim\limits_{M\rightarrow \infty}\mathbf{B}(y)\mathbf{C}(x).
\end{align}
\end{proposition}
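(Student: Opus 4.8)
The plan is to derive both identities from Proposition~\ref{pro1} and the Heisenberg relation~\eqref{bc8}. Identity~\eqref{bc10} comes out immediately: in the formula $\mathbf{B}(x)=\exp\bigl(\sum_{n\ge1}\tfrac1n x^n\wedge_n\bigr)$ of Proposition~\ref{pro1}, each summand $\wedge_n$ has, by definition, the limit $H_{-n}$ as an operator on $\mathcal{F}^{(0)}$, and since on any fixed vector only finitely many of the terms $b(-j)c(j-n)$ act nontrivially, the exponential series stabilizes termwise as $M\to\infty$, giving $\lim_{M\to\infty}\mathbf{B}(x)=\exp\bigl(\sum_{n\ge1}\tfrac{x^n}{n}H_{-n}\bigr)$. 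The same argument with $\wedge^*_n\to H_n$ yields the formula for $\mathbf{C}(x)$.

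For~\eqref{bc6}, I would set $E_+=\sum_{n\ge1}\tfrac{x^n}{n}H_n$ and $E_-=\sum_{n\ge1}\tfrac{y^n}{n}H_{-n}$, so that by the first part $\lim_{M\to\infty}\mathbf{C}(x)=\exp(E_+)$ and $\lim_{M\to\infty}\mathbf{B}(y)=\exp(E_-)$. Using~\eqref{bc8}, i.e.\ $[H_m,H_{-n}]=m\delta_{m,n}$ for $m,n\ge1$, one computes
\begin{align*}
[E_+,E_-]=\sum_{m,n\ge1}\frac{x^m y^n}{mn}\,[H_m,H_{-n}]=\sum_{n\ge1}\frac{(xy)^n}{n}=-\ln(1-xy),
\end{align*}
which is a scalar, hence central. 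Therefore the BCH expansion of $\exp(E_+)\exp(E_-)$ truncates after the bracket term, giving $\exp(E_+)\exp(E_-)=\exp\bigl([E_+,E_-]\bigr)\exp(E_-)\exp(E_+)=\tfrac{1}{1-xy}\exp(E_-)\exp(E_+)$, which is precisely~\eqref{bc6}.

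The only step needing real attention is the interchange of the limit $M\to\infty$ with the operator products $\mathbf{C}(x)\mathbf{B}(y)$ and $\mathbf{B}(y)\mathbf{C}(x)$. As in the proof of Proposition~\ref{pro1}, this is legitimate because on each fixed vector of $\mathcal{F}^{(0)}$ only finitely many exponential factors are active, so the identities above are finite manipulations once a vector is fixed, and the scalar $\tfrac{1}{1-xy}=e^{-\ln(1-xy)}$ is to be read as a formal power series in $xy$. I do not expect any substantive obstacle beyond this bookkeeping, since the combinatorial content reduces to $\sum_{n\ge1}\tfrac{(xy)^n}{n}=-\ln(1-xy)$, the same kind of logarithmic identity already used in Lemma~\ref{le2}.
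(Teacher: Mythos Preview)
Your proposal is correct and matches the paper's approach exactly: the paper presents Proposition~\ref{pro2} without an explicit proof, simply writing ``Thus we have the following identities'' after establishing the Heisenberg relation~\eqref{bc8}, so your argument is precisely the routine verification the authors left implicit. The only content is the computation $[E_+,E_-]=\sum_{n\ge1}(xy)^n/n=-\ln(1-xy)$ and the central-commutator BCH identity $e^{E_+}e^{E_-}=e^{[E_+,E_-]}e^{E_-}e^{E_+}$, both of which you carry out correctly.
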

\subsection{Action of $\mathbf{B}(x)$ on $\mathcal{F}^{(0)}$ and $\mathbf{C}(x)$ on $\mathcal{F}^{*(0)}$}\label{sub2}
The Fock space $\mathcal{F}$ is spanned by negative (resp. non-positive) modes of $c(z)$ (resp. $b(z)$) acting on the vacuum vector $|0\rangle$ satisfying the relations 
\begin{align}\label{bc2}
b(m+1)|0\rangle=0,~~~~c(m)|0\rangle=0,~~~~m\geq 0.
\end{align}
Then $\mathcal F$ is spanned by $b(-m_1)\cdots b(-m_s)c(-n_1)\cdots c(-n_t)|0\rangle$,
and decomposes as follows. 
\begin{align*}
\mathcal{F}=\bigoplus_{i\in \mathbb{Z}}\mathcal{F}^{(i)},
\end{align*}
where the subspace $\mathcal{F}^{(i)}$ has a basis consisting of monomials $ b(-m_1)\cdots b(-m_s)c(-n_1)\cdots c(-n_t)|0\rangle$,
$m_1>\cdots >m_s\geq 0$ and $n_1>\cdots >n_t>0$ such that $s-t=i$. Let's focus on the subspace $\mathcal{F}^{(0)}$.
\par The dual Fock space $\mathcal{F}^{*}$ is generated by the right action of $b(n), c(n)$ on
 the dual vacuum vector $\langle 0|$ satisfying the relations 
\begin{align}
\langle 0|b(i+1)=0,~~~~\langle 0|c(i)=0,~~~~i< 0.
\end{align}

We are only interested in the subspace $\mathcal{F}^{*(0)}$ with a basis
$\langle 0|b(m_k)\cdots b(m_1)c(n_k)\cdots c(n_1)$, \newline
$m_1>\cdots >m_k> 0,~n_1>\cdots >n_k\geq0$.

\par A vector $\tau\in \mathcal{F}$ is called a KP tau function\cite{KRR2013,Whe2012} if $\tau$ obeys
\begin{align}
\mathrm{Res}_{z}b(z)\otimes c(z)(\tau\otimes \tau)=0,
\end{align}
where $\mathrm{Res}_{z}f(z)$ denotes the coefficient of $z^{-1}$ in $f(z)$.

By the vacuum condition, $\mathrm{Res}_{z}b(z)\otimes c(z)\left(|0\rangle\otimes |0\rangle\right)=0$. Further, it can be shown that
\cite{JL2019}
\begin{align*}
\left[\mathrm{Res}_{z}b(z)\otimes c(z),\mathbf{B}(x)\otimes \mathbf{B}(x)\right]=0,
\end{align*}
thus $\mathbf{B}(x_1)\mathbf{B}(x_2)\cdots \mathbf{B}(x_N)|0\rangle$ is a KP tau function, and can be called a Bethe eigenvector
(cf. \cite{Whe2012}). 

A \textit{partition} $\mu=(\mu_1\ldots\mu_l)$   
is a weakly decreasing non-negative integers. Its weight
is $|\mu|=\sum^{l}_{j=1}\mu_{j}$ and the length is $l(\mu)=l$.
Pick a rectangle $[N,M]$ containing the Young diagram of $\mu$, i.e., $\mu_1\leq M,~l(\mu)\leq N$.
For any partition $\mu=(\mu_1\ldots\mu_l)$, we define
\begin{align}
\label{bc14}|\mu)&=|\mu_1,\dots,\mu_l)=b(-m_1)b(-m_2)\cdots b(-m_l)c(-l)c(-l+1)\cdots c(-1)|0\rangle,\\
\label{bc15}(\mu|&=(\mu_1,\dots,\mu_l|=\langle 0|b(1)b(2)\cdots b(l)c(m_l)c(m_{l-1})\cdots c(m_1),
\end{align}
where $m_i=\mu_i-i$ for all $1\leq i\leq l$, so $m_{1}>\cdots >m_{l}>-l$.
We also define $deg\left(|\mu)\right)=deg\left((\mu|\right)=|\mu|$.
\begin{definition}
The partition $\mu=(\mu_{1},\dots,\mu_{l+1})$ is said to interlace the partition $\nu=(\nu_{1},\dots,\nu_{l})$, written as
$\mu\succ \nu$, if for all $1\leq i\leq l$ 
\begin{align}
\mu_{i}\geq\nu_{i}\geq\mu_{i+1}.
\end{align}\end{definition}

\begin{theorem}\label{th4}
For fixed $M\in\mathbb N$ and vector $|\nu)=|\nu_{1},\dots,\nu_{l})$ such that $M\geq\nu_1$ 
we have that
\begin{align}\label{bc7}
\mathbf{B}(x)|\nu)=\sum_{\nu\prec \mu\subseteq [l+1,M]}x^{|\mu|-|\nu|}|\mu).
\end{align}
\end{theorem}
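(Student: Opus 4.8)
The plan is to compute directly with the product form of $\mathbf{B}(x)$ rather than with the exponential form of Proposition~\ref{pro1}. Put $M_i:=b(-(i-1))c(i-2)$. From \eqref{bc1} we get $\{b(-(i-1)),c(i-2)\}=\delta_{-(i-1),-(i-2)}=0$ together with $b(j)^2=c(j)^2=0$, so $M_i^2=0$, hence $\exp(xM_i)=1+xM_i$ and
\[
\mathbf{B}(x)=\mathop{\overrightarrow\prod}\limits_{i\in(-\infty,M]}(1+xM_i)=\sum_{S}x^{|S|}\mathop{\overrightarrow\prod}\limits_{i\in S}M_i ,
\]
the sum over finite subsets $S\subset(-\infty,M]\cap\mathbb Z$, each inner product ordered by decreasing $i$. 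For a fixed $|\nu)\in\mathcal F^{(0)}$ only finitely many $S$ contribute, since $b(-(i-1))$ with $i\ll0$ annihilates all of $\mathcal F^{(0)}$ (as noted in the Remark above). For orientation: in the limit $M\to\infty$ this is the standard action of the half-vertex operator $\exp(\sum_{n\ge1}\frac{x^n}{n}H_{-n})$, i.e.\ multiplication by $\sum_{k\ge0}h_kx^k$, and the Pieri rule yields the interlacing sum with no bound on $\mu_1$; the truncation of $\wedge_n$ to $i\le M-1$ is exactly what will impose the constraint $\mu_1\le M$.

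The technical core is the action of one factor on a partition vector. Using \eqref{bc1} and the vacuum relations \eqref{bc2} one checks: with $m_k=\mu_k-k$, $M_i\,|\mu)=|\mu^{[k]})$ when $i-2=m_k$ for some $k$ at which $\mu$ has an addable box (i.e.\ $\mu_{k-1}>\mu_k$, with the convention $\mu_0=\infty$, so row $1$ is always addable), where $\mu^{[k]}$ is $\mu$ with that box added; and $M_i\,|\mu)=0$ otherwise. Equivalently, if $|\mu)$ is viewed as the charge-zero free-fermion (Maya) diagram of $\mu$, then $M_i$ transports the particle on site $i-\tfrac32$ to site $i-\tfrac12$, killing the state unless site $i-\tfrac32$ is occupied and site $i-\tfrac12$ is empty; since the two sites are adjacent no reordering sign occurs, so the coefficient is exactly $+1$, and the result is always $\mu$ enlarged by one box with $\mu_1$ still $\le M$.

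Granting this, a term $x^{|S|}\mathop{\overrightarrow\prod}_{i\in S}M_i\,|\nu)$ is evaluated by applying the $M_i$ in decreasing order of $i$: each step raises one particle one site or annihilates the state, and a maximal block $\{i_0,i_0+1,\dots,i_1\}\subseteq S$ slides a single particle from site $i_0-\tfrac32$ up to site $i_1-\tfrac12$, nonzero exactly when all sites it traverses were empty in $|\nu)$. Thus $\mathop{\overrightarrow\prod}_{i\in S}M_i\,|\nu)$ equals $0$ or $|\mu)$, where $\mu$ arises from $\nu$ by raising a subset of its particles strictly upward without collision; this is precisely the condition that $\mu/\nu$ be a horizontal strip, i.e.\ $\nu\prec\mu$, and $l(\mu)\le l+1$ holds by the definition of the interlacing relation (and indeed is forced by the operators, since a second sea particle cannot be raised). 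The requirement $S\subseteq(-\infty,M]$ turns into $\mu_1\le M$; the slides, hence $S$, are uniquely recovered from $\mu$; a block of length $t$ adds $t$ boxes, so $|S|=|\mu|-|\nu|$; and every $\nu\prec\mu\subseteq[l+1,M]$ is obtained from exactly one $S$. Summing the surviving terms gives $\mathbf{B}(x)|\nu)=\sum_{\nu\prec\mu\subseteq[l+1,M]}x^{|\mu|-|\nu|}|\mu)$. (The hypothesis $M\ge\nu_1$ is what makes $\mu=\nu$ admissible and the bound $\mu_1\le M$ nonvacuous.)

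The step I expect to be the main obstacle is the combinatorics of the ordered product: showing that ``decreasing $i$'' is exactly the ordering under which the particle slides do not interfere, that each interlacing $\mu$ with $\mu_1\le M$ is produced once and only once, and that non-interlacing $\mu$ never appear (a slide violating $\mu_i\le\nu_{i-1}$ produces a collision, hence a zero); this goes hand in hand with the sign computation in the single-factor lemma, which is most transparent in the semi-infinite wedge language. A more computational alternative is induction on $M\ge\nu_1$ via $\mathbf{B}_M(x)=(1+xM_M)\mathbf{B}_{M-1}(x)$: the lemma shows $M_M$ acts only on the first row, and only when $\mu_1=M-1$, raising it to $M$, which upgrades the bound $\mu_1\le M-1$ in the inductive hypothesis to $\mu_1\le M$; but the base case $M=\nu_1$ still has to be verified directly, so the wedge argument seems preferable.
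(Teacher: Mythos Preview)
Your approach is essentially the paper's own: both expand $\mathbf{B}(x)=\prod_i(1+xM_i)$ using nilpotency, truncate to the finitely many factors that act nontrivially on $|\nu)$, and set up a bijection between the surviving monomials and partitions $\mu$ interlacing $\nu$; the paper carries out the single-factor computation directly in the partition-vector notation and exhibits the inverse map via explicit operators $E_1\cdots E_{l+1}$, while you phrase the same calculation in Maya-diagram language. One slip to correct: since the rightmost factor in $\mathop{\overrightarrow\prod}_{i\in S}M_i$ carries the smallest $i$ and acts on $|\nu)$ first, the $M_i$ are applied in \emph{increasing} order of $i$, not decreasing---your block-sliding description (a maximal block $\{i_0,\dots,i_1\}$ transports one particle from $i_0-\tfrac32$ to $i_1-\tfrac12$) is correct precisely for that order, whereas under decreasing order the same block would instead shift a run of adjacent particles each up by one.
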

\begin{proof} It follows from (\ref{bc1}) that 
\begin{align}
\exp\left(xb(-j)c(j-1)\right)=1+xb(-j)c(j-1).
\end{align}

As $b(-k)c(k-1)|\nu)=0$ for $k\leq -l-1$, we have that
\begin{align*}
\mathbf{B}(x)|\nu)&=\mathop{\overrightarrow\prod}\limits_{j\in [-l,M-1]}\exp\left(xb(-j)c(j-1)\right)|\nu)=\mathop{\overrightarrow\prod}\limits_{j\in [-l,M-1]}\left(1+xb(-j)c(j-1)\right)|\nu)\\
&=\sum_{1\leq s\leq M+l} x^{s}b(-a_1)c(a_1-1)b(-a_2)c(a_2-1)\cdots b(-a_s)c(a_s-1)|\nu)+|\nu),
\end{align*}
where $a_{M+l}< a_{M+l-1}<\cdots< a_2< a_1\leq M-1$ and $b(-a_i)c(a_i-1)$ are bundled together to act.\\
For simplicity, we denote $|-l\rangle=c(-l)c(-l+1)\cdots c(-1)|0\rangle$ in the following.
\par On one hand, for
$ |\lambda)=b(-s_1)\cdots b(-s_l)|-l\rangle,~~\lambda_{i}=s_i+i ~\text{and}~s_1>\cdots>s_{l}\geq -l+1$,
we have
$$\begin{cases}
b(-s_i-1)c(s_i)(|\lambda))=|\lambda_1,\cdots,\lambda_{i-1},\lambda_{i}+1,\lambda_{i+1},\cdots, \lambda_{l}), \lambda_{i-1}-\lambda_i> 0,\\
b(l)c(-l-1)(|\lambda))=|\lambda_1,\cdots,\lambda_{l},1),\\
deg(b(-s_i-1)c(s_i)(|\lambda)))=deg(b(l)c(-l-1)(|\lambda)))=deg(|\lambda))+1.
\end{cases}
$$
due to the fact that $[b(-j)c(j-1),b(-i)]=\delta_{i+1,j}b(-j)$ and $c(j)|0\rangle=0,~j\geq 0$.
Note that if $l_{i-1}=s_{i-1}-s_{i}>1$, we have
\begin{align*}
&\mathop{\overrightarrow\prod}\limits_{k\in [0,l_{i-1}-2]}b(-s_i-k-1)c(s_i+k)(|\lambda))=|\lambda_1,\cdots,\lambda_{i-1},\lambda_{i}+l_{i-1}-1,\lambda_{i+1},\cdots, \lambda_{l}),\\
&\mathop{\overrightarrow\prod}\limits_{k\in [0,l_{i-1}-1]}b(-s_i-k-1)c(s_i+k)(|\lambda))=0.
\end{align*}

These relations imply that the new element $|\mu)$ generated by $b(-j)c(j-1)$ satisfies that
\begin{align}
\mu_{i}\geq\nu_{i}\geq\mu_{i+1},~~1\leq i\leq l.
\end{align}
In other words, for $|\mu)=b(-a_1)c(a_1-1)b(-a_2)c(a_2-1)\cdots b(-a_s)c(a_s-1)|\nu)$, we have $\nu\prec \mu$ and $|\mu|-|\nu|=s$.

On the other hand, given an element $|\mu)$ with $\nu\prec \mu$, say
\begin{align}
&|\mu)=b(-m_1)b(-m_2)\cdots b(-m_l)b(-m_{l+1})c(-l-1)c(-l)c(-l+1)\cdots c(-1)|0\rangle,\\
&|\nu)=b(-n_1)b(-n_2)\cdots b(-n_l)b(-n_{l+1})c(-l-1)c(-l)c(-l+1)\cdots c(-1)|0\rangle,
\end{align}
where $m_{i}=\mu_i-i,~ n_{i}=\nu_i-i,~ -l+1\leq n_{l},~n_{l+1}=l+1\geq -m_{l+1}$, and $m_{i}\geq n_{i}> m_{i+1}$.

For $1\leq i\leq l$, we set
$$E_{i}=\begin{cases}
1,& m_i=n_i\\
b(-m_i)c(m_i-1)b(-m_i+1)c(m_i-2)\cdots b(-n_i-1)c(n_i), & m_i>n_i,
\end{cases}$$
then
\begin{align*}
E_{1}E_{2}\cdots E_{l+1}|\nu)=|\mu).
\end{align*}
Note that $E_{1}E_{2}\cdots E_{l+1}$ is generated by $b(-j)c(j-1),~ -l\leq j\leq M-1$. This completes the proof.
\end{proof}
Using the same method, we have the following result.
\begin{corollary}\label{cor2}
For arbitrary positive integer $M$ and vector $(\nu|=(\nu_1,\dots,\nu_l|$
we have
\begin{align}
(\nu|\mathbf{C}(x)=\sum_{\nu\prec \mu\subseteq [l+1,M]}x^{|\mu|-|\nu|}(\mu|.
\end{align}
\end{corollary}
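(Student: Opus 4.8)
The plan is to obtain the corollary from Theorem~\ref{th4} by a \emph{transpose}, rather than by rerunning the argument. Let $\tau$ be the involutive anti-automorphism of the $bc$ Clifford algebra determined by $\tau(b(i))=c(-i)$ and $\tau(c(i))=b(-i)$; it preserves \eqref{bc1}, since $\tau(\{b(i),c(j)\})=\{b(-j),c(-i)\}=\delta_{i,-j}$, and it swaps the left and right vacuum conditions, so that $|0\rangle^{\tau}=\langle 0|$ and $(A|0\rangle)^{\tau}=\langle 0|\tau(A)$ for every $A$. Comparing the product orders in the definitions of $\mathbf{B}(x)$ and $\mathbf{C}(x)$, one checks that $\tau$ reverses the $i$-indexed product and carries the factor $\exp(xb(-i+2)c(i-1))$ of $\mathbf{C}(x)$ to $\exp(xb(-i+1)c(i-2))$, hence $\tau(\mathbf{C}(x))=\mathbf{B}(x)$; the same bookkeeping with \eqref{bc14}--\eqref{bc15} gives $(\mu|^{\tau}=|\mu)$ and $|\mu)^{\tau}=(\mu|$. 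I would also record the two standard facts that the pairing $\mathcal{F}^{*(0)}\times\mathcal{F}^{(0)}\to\mathbb{C}$ is nondegenerate with $\langle(\mu|,|\nu)\rangle=\delta_{\mu\nu}$, and that the vacuum expectation---and therefore this pairing---is $\tau$-invariant, so that $\langle\langle u|,A|v\rangle\rangle=\langle|v\rangle^{\tau}\tau(A),\langle u|^{\tau}\rangle$.

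Granting these, the proof is immediate. Write $(\nu|\mathbf{C}(x)=\sum_{\mu}c_{\mu}(\mu|$; pairing on the right with $|\mu)$ and using orthonormality, $c_{\mu}=\langle(\nu|,\mathbf{C}(x)|\mu)\rangle$. Applying $\tau$-invariance with $\langle u|=(\nu|$, $A=\mathbf{C}(x)$, $|v\rangle=|\mu)$ converts this to $\langle|\mu)^{\tau}\tau(\mathbf{C}(x)),(\nu|^{\tau}\rangle=\langle(\mu|\mathbf{B}(x),|\nu)\rangle=\langle(\mu|,\mathbf{B}(x)|\nu)\rangle$. By Theorem~\ref{th4} we have $\mathbf{B}(x)|\nu)=\sum_{\nu\prec\lambda\subseteq[l+1,M]}x^{|\lambda|-|\nu|}|\lambda)$, so this pairing equals $x^{|\mu|-|\nu|}$ when $\nu\prec\mu\subseteq[l+1,M]$ and $0$ otherwise. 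Substituting back gives the asserted formula.

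An alternative---and presumably the sense of ``the same method''---is to mimic the proof of Theorem~\ref{th4} for the right action: expand each exponential of $\mathbf{C}(x)$ as $1+xb(-i+2)c(i-1)$ by nilpotency, observe that $(\nu|b(-i+2)c(i-1)=0$ once $i$ is small enough so that the product on $(\nu|$ truncates, expand into ordered compositions of blocks $b(-a+2)c(a-1)$, verify that one block either annihilates a standard covector or adds a single box (the boundary factor playing the role of $b(l)c(-l-1)$ and opening the $(l+1)$-st row), and match the outcomes with the interlacing $\mu\subseteq[l+1,M]$, the converse being supplied by the ``staircase'' operator $E_{1}\cdots E_{l+1}$. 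In this route I expect the only real effort to be the index bookkeeping: the factors of $\mathbf{C}(x)$ carry the shift $b(-i+2)c(i-1)$ in place of $b(-i+1)c(i-2)$, so the box-adding rule on covectors is offset from the ket case, and one must confirm that the fixed prefix $b(1)\cdots b(l)$ of $(\nu|$ stays inert while the inequalities $\mu_{i}\ge\nu_{i}\ge\mu_{i+1}$ again fall out of the ``overfilled column'' nilpotency of Theorem~\ref{th4}. Since the transpose makes it clear that the answer must coincide with the ket case, I would present the short proof and leave the bookkeeping to a remark.
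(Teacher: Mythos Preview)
Your proof is correct, and it takes a genuinely different route from the paper. The paper simply says ``Using the same method, we have the following result,'' meaning one reruns the combinatorial argument of Theorem~\ref{th4} on the right action: expand each exponential as $1+xb(-i+2)c(i-1)$, truncate the product using the dual vacuum conditions, and identify the surviving terms with the interlacing condition $\nu\prec\mu\subseteq[l+1,M]$. This is exactly the bookkeeping you outline in your second paragraph.

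Your primary argument instead introduces the Clifford anti-automorphism $\tau$ with $\tau(b(i))=c(-i)$, $\tau(c(i))=b(-i)$, checks that it swaps the two vacua, sends $\mathbf{C}(x)$ to $\mathbf{B}(x)$, and intertwines $|\mu)\leftrightarrow(\mu|$; then the corollary follows from Theorem~\ref{th4} by a single application of $\tau$-invariance of the vacuum pairing and orthonormality~\eqref{bc16}. The verifications you sketch (that $\tau$ reverses the product order and carries each factor of $\mathbf{C}(x)$ to the corresponding factor of $\mathbf{B}(x)$, and that $|\mu)^{\tau}=(\mu|$ with the indices in~\eqref{bc14}--\eqref{bc15}) are straightforward and correct. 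What your approach buys is a conceptual explanation of \emph{why} the two results match and a proof that avoids repeating any combinatorics; what the paper's approach buys is that it requires no new structure and is self-contained once Theorem~\ref{th4} is in hand. Either is perfectly acceptable here.
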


Following \cite{Mac1995}, the \textit{complete symmetric function} $h_{k}(x)$ in the variables $x_1,x_2,\cdots$ is given by
\begin{align*}
\sum_{k=0}^{\infty}h_{k}(x)z^{k}=\prod^{\infty}_{i=1}\frac{1}{1-x_iz}.
\end{align*}

To each partition $\lambda=(\lambda_1,\lambda_2,\dots,\lambda_k)$ we associate the \textit{Schur function} $s_\lambda(x)$ defined by
\begin{align*}
s_{\lambda}(x)=\det\left(h_{\lambda_{i}-i+j}(x)\right)_{1\leq i,j\leq k}.
\end{align*}

For the rest of the paper, we usually consider the Schur function $s_\lambda\{x\}$ in finitely many variables $\{x\}=\{x_1,x_2,\cdots ,x_N\}$,
which is obtained by letting $x_{N+1}=x_{N+2}=\cdots=0$ in $s_{\lambda}(x)$.
The following identities are well-known \cite{Mac1995}:
\begin{align}\label{bc3}
&s_\mu\{x\}=\sum_{\nu\prec \mu}s_\nu\{\bar{x}\}x^{|\mu|-|\nu|}_N, \\
\label{bc11}&s_\mu\{x\}=0, \qquad l(\mu)>N.
\end{align}
where  $\{\bar{x}\}=\{x\}\backslash \{x_N\}=\{x_1,\cdots, x_{N-1}\},  $.
\par Using Proposition \ref{pro1} and \cite{Whe2012}, we have
\begin{proposition}\label{e:genSchur}
For a fixed positive integer $M$ and $\{x\}=\{x_1,\cdots, x_N\}$,
\begin{align}
\label{bc12}\mathbf{B}(x_1)\cdots \mathbf{B}(x_N)|0\rangle=&\exp\left(\sum_{\substack{1\leq m\leq M \\ 1\leq n\leq N}}(-1)^{n-1}s_{(m,1^{n-1})}\{x\}b(-m+1)c(-n)\right)|0\rangle\\
\notag=&\sum_{\mu\subseteq[N,M]}s_\mu\{x\}|\mu),\\
\label{bc13}\langle 0|\mathbf{C}(x_N)\cdots\mathbf{C}(x_1)=&\langle 0|\exp\left(\sum_{\substack{1\leq m\leq M \\ 1\leq n\leq N}}(-1)^{n-1}s_{(m,1^{n-1})}\{x\}b(n)c(m-1)\right)\\
\notag=&\sum_{\mu\subseteq[N,M]}s_\mu\{x\}(\mu|.
\end{align}

\end{proposition}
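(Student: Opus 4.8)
The plan is to establish the two ``Schur-function'' identities (for $\mathbf B(x_1)\cdots\mathbf B(x_N)|0\rangle$ and for $\langle0|\mathbf C(x_N)\cdots\mathbf C(x_1)$) by an induction on $N$ resting on Theorem~\ref{th4} and Corollary~\ref{cor2}, and then to deduce the two ``exponential'' identities from Proposition~\ref{pro1} together with the normal-ordering computation of \cite{Whe2012}. I will describe the $\mathbf B$-case; the $\mathbf C$-case is word-for-word the same with $(\nu|\mathbf C(x)$ replacing $\mathbf B(x)|\nu)$ and the dual vacuum replacing $|0\rangle$.

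\emph{Schur form.} For $N=1$, Theorem~\ref{th4} applied to $|\nu)=|0\rangle=|\emptyset)$ gives $\mathbf B(x_1)|0\rangle=\sum_{0\le m\le M}x_1^m|(m))=\sum_m s_{(m)}\{x_1\}|(m))$, since $\emptyset\prec\mu$ forces $\mu$ to be a single row and $s_{(m)}\{x_1\}=x_1^m$. Assuming $\mathbf B(x_2)\cdots\mathbf B(x_N)|0\rangle=\sum_{\nu\subseteq[N-1,M]}s_\nu\{\bar x\}|\nu)$ with $\{\bar x\}=\{x_2,\dots,x_N\}$, I would apply $\mathbf B(x_1)$, use Theorem~\ref{th4} termwise and interchange the two summations to obtain
\begin{align*}
\mathbf B(x_1)\cdots\mathbf B(x_N)|0\rangle=\sum_{\mu}\Big(\sum_{\nu\prec\mu}s_\nu\{\bar x\}\,x_1^{|\mu|-|\nu|}\Big)|\mu).
\end{align*}
The inner sum equals $s_\mu\{x\}$ by the branching rule \eqref{bc3}, and \eqref{bc11} kills all partitions with $l(\mu)>N$, so the result collapses to $\sum_{\mu\subseteq[N,M]}s_\mu\{x\}|\mu)$. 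The only point to check is that $\nu\subseteq[N-1,M]$ together with $\nu\prec\mu$ forces $\mu\subseteq[N,M]$, which is exactly the range appearing in Theorem~\ref{th4}.

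\emph{Exponential form.} By Proposition~\ref{pro1}, $\mathbf B(x_i)=\exp\big(\sum_{n\ge1}\tfrac1n x_i^n\wedge_n\big)$, and since the $\wedge_n$ mutually commute the product telescopes to $\mathbf B(x_1)\cdots\mathbf B(x_N)=\exp\big(\sum_{n\ge1}\tfrac1n p_n\{x\}\wedge_n\big)$ with $p_n\{x\}=x_1^n+\cdots+x_N^n$. This is the exponential of a fermion bilinear, so by the standard Wick/Bogoliubov normal ordering — precisely the computation carried out for $\mathbb B(x)$ in \cite{Whe2012}, transported to $\mathbf B$ via the commutative diagrams relating $\mathbf B$ and $\mathbb B$ — its action on $|0\rangle$ is a ``squeezed state'' $\exp\big(\sum_{m,n\ge1}R_{mn}\,b(-m+1)c(-n)\big)|0\rangle$. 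To identify the coefficients $R_{mn}$: the operators $b(-m+1)c(-n)$ ($m,n\ge1$) are pure creation operators, they mutually commute and each squares to $0$, so the exponential expands as $\prod_{m,n}\big(1+R_{mn}b(-m+1)c(-n)\big)$; the single-factor terms contribute $\sum_{m,n}R_{mn}b(-m+1)c(-n)|0\rangle=\sum_{m,n}(-1)^{n-1}R_{mn}|(m,1^{n-1}))$, because a short Wick computation gives $b(-m+1)c(-n)|0\rangle=(-1)^{n-1}|(m,1^{n-1}))$ (the $n-1$ modes $b(1),\dots,b(n-1)$ occurring in $|(m,1^{n-1}))$ contract with $c(-1),\dots,c(-(n-1))$ at the cost of the sign $(-1)^{n-1}$). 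Comparing the coefficient of each hook $|(m,1^{n-1}))$ with the Schur form already proved forces $R_{mn}=(-1)^{n-1}s_{(m,1^{n-1})}\{x\}$, which is the first displayed identity. Finally, the equality of the two forms for a general $\mu$ — that the multi-factor terms reproduce $s_\mu\{x\}|\mu)$ — is exactly Giambelli's expansion of $s_\mu$ as a determinant of hook Schur functions in Frobenius coordinates.

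\emph{Main obstacle.} The inductive Schur-form step is routine once Theorem~\ref{th4} and \eqref{bc3} are in hand. The substantive part is the exponential form: justifying that $\exp\big(\sum_{n\ge1}\tfrac1n p_n\{x\}\wedge_n\big)|0\rangle$ normal-orders into an exponential of a pure-creation fermion bilinear, and pinning down \emph{every} coefficient — including the sign $(-1)^{n-1}$ — as a hook Schur function. This is where one leans on the normal-ordering machinery of \cite{Whe2012} together with the Frobenius/Giambelli combinatorics, and where keeping track of fermionic signs is essential.
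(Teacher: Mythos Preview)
Your proposal is correct and follows essentially the paper's route: the paper's own proof is the single clause ``Using Proposition~\ref{pro1} and \cite{Whe2012}'', and your exponential-form argument (rewrite $\prod_i\mathbf B(x_i)$ as one exponential of commuting $\wedge_n$'s via Proposition~\ref{pro1}, then normal-order as in \cite{Whe2012}) invokes exactly these ingredients. Your self-contained induction for the Schur form via Theorem~\ref{th4} and the branching rule~\eqref{bc3}, together with the hook-coefficient identification $b(-m+1)c(-n)|0\rangle=(-1)^{n-1}|(m,1^{n-1}))$ and the Giambelli consistency check, supplies more detail than the paper's bare citation but is not a genuinely different strategy.
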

\subsection{Correlation function $\langle 0|\mathbf{C}(x_N)\cdots\mathbf{C}(x_1)\mathbf{B}(y_1)\cdots \mathbf{B}(y_N)|0\rangle$}\label{sub4}
For $(\lambda|\in \mathcal{F}^{(0)}$ and $|\mu)\in \mathcal{F}^{*(0)}$, we define the bilinear pairing $\langle\ |\ \rangle$ by
\begin{align}
\label{bc16} 
(\lambda|\mu)=\delta_{\lambda,\mu},
\end{align}
where it is assumed that $\langle0|1|0\rangle=1$.
\begin{theorem}\label{th3}
For a fixed positive integer $M$,
\begin{align}
\label{eq1}&\langle 0|\mathbf{C}(x_N)\cdots\mathbf{C}(x_1)\mathbf{B}(y_1)\cdots \mathbf{B}(y_N)|0\rangle =\sum_{\mu\subseteq[N,M]}s_\mu\{x\}s_\mu\{y\},
\end{align}
where $\{x\}=\{x_1,\cdots, x_N\}$, $\{y\}=\{y_1,\cdots, y_N\}$, and $\mu$ runs through all Young diagrams inside the rectangle
$[N, M]$ of height $N$ and width $M$. In particular, when $M\rightarrow\infty$, we get the Cauchy identity
\begin{align}
\label{eq1b}&\langle 0|\mathbf{C}(x_N)\cdots\mathbf{C}(x_1)\mathbf{B}(y_1)\cdots \mathbf{B}(y_N)|0\rangle =\prod_{i, j=1}^N\frac1{1-x_iy_j}=\sum_{l(\mu)\leq N}s_\mu\{x\}s_\mu\{y\}.
\end{align}
\end{theorem}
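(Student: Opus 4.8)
The plan is to read the theorem off from the Schur-function expansions already recorded in Proposition~\ref{e:genSchur}, together with the orthonormality of the two distinguished bases under the pairing \eqref{bc16}. Concretely, by \eqref{bc12} the Bethe vector is $\mathbf{B}(y_1)\cdots\mathbf{B}(y_N)|0\rangle=\sum_{\mu\subseteq[N,M]}s_\mu\{y\}\,|\mu)$, a finite sum in $\mathcal{F}^{(0)}$, and by \eqref{bc13} the dual Bethe vector is $\langle 0|\mathbf{C}(x_N)\cdots\mathbf{C}(x_1)=\sum_{\lambda\subseteq[N,M]}s_\lambda\{x\}\,(\lambda|$, a finite sum in $\mathcal{F}^{*(0)}$; that these sums are genuinely finite is exactly what the Remark following \eqref{B4} provides. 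Pairing the two expansions and using $(\lambda|\mu)=\delta_{\lambda,\mu}$ collapses the resulting double sum to its diagonal, which is precisely \eqref{eq1}.

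For \eqref{eq1b} I would let $M\to\infty$ in both members of \eqref{eq1}. On the right the rectangular constraint $\mu\subseteq[N,M]$ relaxes to $l(\mu)\le N$, and by \eqref{bc11} the sum is unchanged even if that restriction is dropped. On the left I would evaluate the limit independently: iterating the rational commutation relation \eqref{bc6} moves each $\mathbf{C}(x_i)$ rightward past each $\mathbf{B}(y_j)$, producing a scalar $(1-x_iy_j)^{-1}$ at each of the $N^2$ crossings, so that $\lim_{M\to\infty}\langle 0|\mathbf{C}(x_N)\cdots\mathbf{C}(x_1)\mathbf{B}(y_1)\cdots\mathbf{B}(y_N)|0\rangle=\prod_{i,j=1}^N(1-x_iy_j)^{-1}\cdot\lim_{M\to\infty}\langle 0|\mathbf{B}(y_1)\cdots\mathbf{B}(y_N)\mathbf{C}(x_N)\cdots\mathbf{C}(x_1)|0\rangle$. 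In this reordered form both ends act trivially: by the vacuum conditions \eqref{bc2} one has $\wedge^*_n|0\rangle=0$ and $\langle 0|\wedge_n=0$, hence by Proposition~\ref{pro1} $\mathbf{C}(x)|0\rangle=|0\rangle$ and $\langle 0|\mathbf{B}(y)=\langle 0|$ for every $M$, so the trailing factor reduces to $\langle 0|0\rangle=1$. Comparing the two computations of the limit yields the Cauchy identity \eqref{eq1b} as a by-product.

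The genuinely substantial input has already been delivered: it is Proposition~\ref{e:genSchur}, which rests in turn on Proposition~\ref{pro1}, Theorem~\ref{th4} and Corollary~\ref{cor2}. Within the present argument the only point requiring care is the interchange of $M\to\infty$ with the infinite products of exponentials defining $\mathbf{B}(x)$ and $\mathbf{C}(x)$ and with the repeated use of \eqref{bc6}. This is legitimate because only finitely many factors of each product act nontrivially on any fixed graded piece of $\mathcal{F}^{(0)}$ (resp. $\mathcal{F}^{*(0)}$), and, by the degree grading $\deg|\mu)=|\mu|$, every matrix coefficient in play is a polynomial in the $x_i$ and $y_j$; hence both sides are well-defined formal power series and the stabilizations $\wedge_n\to H_{-n}$, $\wedge^*_n\to H_n$ are uniform on those pieces. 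Everything else is bookkeeping.
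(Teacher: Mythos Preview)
Your proposal is correct and follows essentially the same route as the paper: the identity \eqref{eq1} is obtained by pairing the Schur expansions \eqref{bc12} and \eqref{bc13} from Proposition~\ref{e:genSchur} against the orthonormality \eqref{bc16}, and the $M\to\infty$ statement \eqref{eq1b} is derived by iterating the commutation relation \eqref{bc6} exactly as the paper does (cf.\ Remark~\ref{RE1}). The paper leaves the first step implicit and records the second in a remark; your write-up simply makes both steps explicit, including the observation that $\mathbf{C}(x)|0\rangle=|0\rangle$ and $\langle 0|\mathbf{B}(y)=\langle 0|$ via Proposition~\ref{pro1}.
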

\begin{remark}\label{RE3}
The correlation function \eqref{eq1} is also known as an example of hypergeometric tau function 
in \cite{OS2001} (see also \cite{HO2015}). In fact, \eqref{eq5} implies that 
\begin{align*}
\langle 0|\mathbf{C}(x_N)\cdots\mathbf{C}(x_1)\mathbf{B}(y_1)\cdots \mathbf{B}(y_N)|0\rangle=\langle 0|\exp\left(\sum^{\infty}_{n=1}\frac{\sum^N_{i=1}x^n_i}{n}\wedge^*_n\right)\exp\left(\sum^{\infty}_{n=1}\frac{\sum^N_{i=1}y^n_i}{n}\wedge_n\right)|0\rangle,
\end{align*}
which agrees with \cite[Eq.(3.1.12)]{OS2001} by setting $r(x)$ to unity in the range of arguments from $-\infty$ to $M$ and zero elsewhere.
Then one can get the function in \cite[Eq. (3.1.4)]{OS2001} (cf.\cite{HO2015,NO2017}), which can be used to give another proof of \eqref{eq1}.
\end{remark}
\begin{remark}\label{RE1} Using \eqref{bc6} we get the following
\begin{align}
\lim_{M, N\rightarrow \infty}\langle 0|\mathbf{C}(x_N)\cdots\mathbf{C}(x_1)\mathbf{B}(y_1)\cdots \mathbf{B}(y_N)|0\rangle=\prod^{\infty}_{i,j=1}\frac{1}{1-x_iy_j}=\sum_{\mu}s_{\mu}(x)s_{\mu}(y).
\end{align}
In particular, for $x_i=y_i=z^{i-\frac{1}{2}}$, we have
\begin{align*}
\lim_{M\rightarrow \infty,N\rightarrow \infty}\langle 0|\mathbf{C}(z^{N-\frac{1}{2}})\cdots\mathbf{C}(z^{1-\frac{1}{2}})\mathbf{B}(z^{1-\frac{1}{2}})\cdots \mathbf{B}(z^{N-\frac{1}{2}})|0\rangle=\prod^{\infty}_{i=1}
\frac{1}{(1-z^i)^i},
\end{align*}
which is the generating function of plane partitions \cite{Mac1995} (cf. \cite{Whe2012}).
\end{remark}
For $1\leq m\leq M$ and variables $\{x\}=\{x_1,\cdots, x_N\}$, $\{y\}=\{y_1,\cdots, y_N\}$, let 
\begin{align*}
&a_m=\sum_{1\leq n\leq N}(-1)^{n-1}s_{(m,1^{n-1})}\{x\}b(n), \quad
a^*_m=\sum_{1\leq n\leq N}(-1)^{n-1}s_{(m,1^{n-1})}\{y\}c(-n), 
\end{align*}
then
\begin{align}\label{bc4}
T_{kt}=\{a_k,a^*_t\}=\sum_{1\leq n\leq N}s_{(k,1^{n-1})}\{x\}s_{(t,1^{n-1})}\{y\}. 
\end{align}
\par In view of Proposition \ref{e:genSchur} we have that 
\begin{align*}
&\mathbf{B}(y_1)\cdots \mathbf{B}(y_N)|0\rangle=\mathop{\overrightarrow\prod}\limits_{m\in [1,M]}(1+b(-m+1)a^*_m)|0\rangle,\\
&\langle 0|\mathbf{C}(x_N)\cdots\mathbf{C}(x_1)=\langle 0|\mathop{\overleftarrow\prod}\limits_{m\in [1,M]}(1+a_mc(m-1)).
\end{align*}
Therefore, we get that
\begin{proposition} For any positive integer $M$, let $T=(T_{ij})_{1\leq i,j\leq M}$, then
\begin{align}
\langle 0|\mathbf{C}(x_N)\cdots\mathbf{C}(x_1)\mathbf{B}(y_1)\cdots \mathbf{B}(y_N)|0\rangle=\det(I+T)=\sum_{\mu\subseteq[N,M]}s_\mu\{x\}s_\mu\{x\}.
\end{align}
\end{proposition}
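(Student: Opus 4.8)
The plan is to expand the two operator products and read off the free--fermion pairing structure. By the two identities displayed just above,
\[
\langle 0|\mathbf{C}(x_N)\cdots\mathbf{C}(x_1)\mathbf{B}(y_1)\cdots \mathbf{B}(y_N)|0\rangle
=\langle 0|\Bigl(\mathop{\overleftarrow\prod}\limits_{m\in[1,M]}(1+a_m c(m-1))\Bigr)\Bigl(\mathop{\overrightarrow\prod}\limits_{m\in[1,M]}(1+b(-m+1)a^*_m)\Bigr)|0\rangle ,
\]
so I would first expand each product over the set of factors that contribute their nontrivial term, producing a double sum indexed by subsets $I,J\subseteq[1,M]$.

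The combinatorial core is to decide which terms survive. For this I would record, from $\{b(i),c(j)\}=\delta_{i,-j}$ and the vacuum conditions \eqref{bc2}, the following: the standalone operator $b(-m+1)$ has nonpositive mode and $c(m-1)$ nonnegative mode, they anticommute with every $a_k,\,a^*_k$ and with every $b(-m'+1),\,c(m'-1)$ for $m'\ne m$, while $\{b(-m+1),c(m-1)\}=1$; moreover $a_k|0\rangle=0$, $\langle 0|a^*_k=0$, $\langle 0|a_k a^*_t|0\rangle=\{a_k,a^*_t\}=T_{kt}$ and $\langle 0|a_k a_t|0\rangle=\langle 0|a^*_k a^*_t|0\rangle=0$. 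Applying Wick's theorem for the $bc$--system, a nonvanishing term must contract each standalone $c(m-1)$ (present when $m\in I$) against the standalone $b(-m+1)$, which exists only if $m\in J$; hence $I\subseteq J$, and the dual argument gives $J\subseteq I$, so $I=J=:S$. The surviving $a$'s and $a^*$'s can then only be contracted with one another, through the scalars $T_{kt}$.

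Next I would fix $S=\{m_1<\cdots<m_k\}$ and compute its contribution. After performing the $k$ forced contractions $\{c(m_p-1),b(-m_p+1)\}$ (each equal to $1$), the $a$'s and $a^*$'s stand in the order $a_{m_1},\dots,a_{m_k},a^*_{m_k},\dots,a^*_{m_1}$, with matched indices at mirror positions. One checks that in the full operator sequence the arc closing each standalone pair is nested with respect to every $a$--$a^*$ arc, so these arcs contribute sign $+1$, and Wick's theorem evaluates the $S$--contribution as $\langle 0|a_{m_1}\cdots a_{m_k}a^*_{m_k}\cdots a^*_{m_1}|0\rangle=\det\bigl(\{a_{m_i},a^*_{m_j}\}\bigr)_{1\le i,j\le k}=\det\bigl(T_{m_i m_j}\bigr)_{1\le i,j\le k}$. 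Summing over all $S\subseteq[1,M]$ and using the principal--minor expansion $\det(I+T)=\sum_{S\subseteq[1,M]}\det(T|_{S\times S})$ then gives $\langle 0|\mathbf{C}(x_N)\cdots\mathbf{C}(x_1)\mathbf{B}(y_1)\cdots \mathbf{B}(y_N)|0\rangle=\det(I+T)$. Finally, the equality $\det(I+T)=\sum_{\mu\subseteq[N,M]}s_\mu\{x\}s_\mu\{y\}$ is Theorem \ref{th3} (equivalently, it follows by combining Proposition \ref{e:genSchur} with the pairing \eqref{bc16}; or one can expand each principal minor by the Cauchy--Binet formula using $T_{kt}=\sum_{n=1}^N s_{(k,1^{n-1})}\{x\}s_{(t,1^{n-1})}\{y\}$ together with Giambelli's hook identity).

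The step I expect to be most delicate is the sign bookkeeping in the previous paragraph: one must show that every subset $S$ contributes exactly $+\det(T|_{S\times S})$, uniformly in $S$, rather than $\pm$ times it. The subtlety is that in the expanded product the standalone $b$-- and $c$--modes are interleaved with the $a$-- and $a^*$--modes in the operator sequence, so one has to verify carefully that the arcs pairing the standalone modes are nested around (and not crossing) the palindromic $a$--$a^*$ block; it is exactly this nesting that collapses the Wick sign to the determinant sign and removes the would--be dependence on $S$.
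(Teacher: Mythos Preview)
Your proposal is correct and follows essentially the same route as the paper: expand the two ordered products over subsets, observe that only equal subsets $I=J=S$ survive, strip off the nested $c(m-1)\,b(-m+1)$ pairs with sign $+1$, and evaluate the remaining palindromic $a,a^*$ block as the principal minor $\det(T|_{S\times S})$. The paper compresses all of this into the single line
\[
\langle 0|a_{\mu_{l}}c(\mu_{l}-1)\cdots a_{\mu_{1}}c(\mu_{1}-1)\,b(-\mu_{1}+1)a^*_{\mu_{1}}\cdots b(-\mu_{l}+1)a^*_{\mu_{l}}|0\rangle
=\langle 0|a_{\mu_{l}}\cdots a_{\mu_{1}}a^*_{\mu_{1}}\cdots a^*_{\mu_{l}}|0\rangle=\det(T_{\mu}),
\]
leaving both the ``$I=J$'' selection and the sign check implicit; your write-up simply makes those two steps explicit, and your caution about the sign is well placed but the nesting is exactly as you describe.
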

\begin{proof} The result follows from the following simple calculation.
For strict partition $\mu=(\mu_1>\cdots >\mu_{l})$ inside the rectangle $[N, M]$,
let $T_{\mu}=(T_{\mu_i\mu_j})_{l\times l}$, then
\begin{align*}
&\langle 0|a_{\mu_{l}}c(\mu_{l}-1)\cdots a_{\mu_{1}}c(\mu_{1}-1)b(-\mu_{1}+1)a^*_{\mu_{1}}\cdots b(-\mu_{l}+1)a^*_{\mu_{l}}|0\rangle\\
=&\langle 0|a_{\mu_{l}}\cdots a_{\mu_{1}}a^*_{\mu_{1}}\cdots a^*_{\mu_{l}}|0\rangle=\det(T_{\mu}).
\end{align*}
\end{proof}

\section{Correlation functions of charged free bosons} 
In this section we define the charged free bosonic system and
 introduce two new operators $\mathbf{B}^{*}(x)$ and $\mathbf{C}^{*}(x)$. We will see that their correlation functions enjoy similar but distinct properties in view of the previous section. 
\subsection{Operators $\mathbf{B}^{*}(x)$ and $\mathbf{C}^{*}(x)$}\label{sub5}
Recall that the charged free bosons \cite{Wang1998,Li2011} are given by
\begin{align}
\varphi(z)=\sum_{i\in \mathbb{Z}}\varphi_{i}z^{-i-1},~~~~\varphi^{*}(z)=\sum_{i\in \mathbb{Z}}\varphi^{*}_{i}z^{-i}
\end{align}
with the commutation relations
\begin{align}\label{B3}
[\varphi_{i},\varphi^{*}_{j}]=\delta_{i,-j},~~~~[\varphi_{i},\varphi_{j}]=[\varphi^{*}_{i},\varphi^{*}_{j}]=0.
\end{align}
\par For a positive integer $M$, we define
\begin{align}
&\label{CB1}\mathbf{B}^{*}(x)=\exp\left(x\varphi_{-M}\varphi^*_{M-1}\right)\exp\left(x\varphi_{-M+1}\varphi^*_{M-2}\right)\cdots=\mathop{\overrightarrow\prod}\limits_{i\in (-\infty,M]}\exp\left(x\varphi_{-i}\varphi^*_{i-1}\right),\\
&\label{CB2}\mathbf{C}^{*}(x)=\cdots \exp\left(x\varphi_{-M+2}\varphi^*_{M-1}\right)\exp\left(x\varphi_{-M+1}\varphi^*_{M}\right)=\mathop{\overleftarrow\prod}\limits_{i\in (-\infty,M]}\exp\left(x\varphi_{-i+1}\varphi^*_{i}\right).
\end{align}

We first consider their basic properties and then study the actions on the Fock space and its dual in the next subsection.

The following, similar to Proposition \ref{pro1}, can be proved in the same manner.
\begin{proposition} For a fixed $M\in\mathbb N$, we have that
\begin{align}
\mathbf{B}^{*}(x)=\exp\left(\sum^{\infty}_{n=1}\frac{(-1)^{n+1}}{n}x^n\bar{\wedge}_n\right),~~\mathbf{C}^{*}(x)=\exp\left(\sum^{\infty}_{n=1}\frac{(-1)^{n+1}}{n}x^n\bar{\wedge}^*_n\right),
\end{align}
where $\bar{\wedge}_n=\sum^M_{-\infty}\varphi_{-i}\varphi^*_{i-n}$ and $\bar{\wedge}^*_n=\sum^M_{-\infty}\varphi_{-i+n}\varphi^*_{i}.$
\end{proposition}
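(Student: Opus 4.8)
The statement to prove is the bosonic analogue of Proposition~\ref{pro1}, namely that the product
$\mathbf{B}^{*}(x)=\mathop{\overrightarrow\prod}_{i\in(-\infty,M]}\exp(x\varphi_{-i}\varphi^*_{i-1})$ collapses to a single exponential $\exp\bigl(\sum_{n\geq1}\tfrac{(-1)^{n+1}}{n}x^n\bar{\wedge}_n\bigr)$, and likewise for $\mathbf{C}^{*}(x)$.

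\textbf{Plan of proof.} The plan is to mimic the two-step argument used for Proposition~\ref{pro1}, keeping Step~1 (the BCH combinatorics of Lemma~\ref{le2}) verbatim and re-running Step~2 with the bosonic commutator $[\varphi_i,\varphi^*_j]=\delta_{i,-j}$ in place of the fermionic anticommutator. First I would record the elementary bracket relations
$[\varphi_{-i}\varphi^*_{i-1},\varphi_{-j}\varphi^*_{j-1}]=\delta_{i,j+1}\varphi_{-j-1}\varphi^*_{j-1}-\delta_{j,i+1}\varphi_{-i-1}\varphi^*_{i-1}$,
which is the direct bosonic counterpart of $[b(-i)c(i-1),b(-j)c(j-1)]$, and more generally the shifted commutators $[\varphi_{-a}\varphi^*_{b},\varphi_{-c}\varphi^*_{d}]=\delta_{a,d}\varphi_{-c}\varphi^*_{b}-\delta_{b,c}\varphi_{-a}\varphi^*_{d}$. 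The only structural difference from the fermionic case is that $\varphi_{-i}\varphi^*_{i-1}$ is no longer nilpotent, so the factor $\exp(x\varphi_{-i}\varphi^*_{i-1})$ is a genuine infinite series rather than $1+x\varphi_{-i}\varphi^*_{i-1}$; but the partial products still telescope because each elementary bracket stays inside the span of the operators $\varphi_{-j}\varphi^*_{k}$ with $j\geq$ (previous lower bound), so the relevant iterated-commutator algebra is again a nilpotent Lie algebra of the same shape.

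\textbf{Key steps in order.} (1) State and use Lemma~\ref{le2} unchanged (it is purely about the Bernoulli/logarithm coefficients). (2) Prove by induction on $m$ the bosonic analogue of \eqref{bc5}:
$\mathop{\overrightarrow\prod}_{i\in[m,M-1]}\exp(x\varphi_{-i}\varphi^*_{i-1})=\exp\bigl(\sum_{n=1}^{M-m}\tfrac{(-1)^{n+1}x^n}{n}\sum_{j=n+m-1}^{M-1}\varphi_{-j}\varphi^*_{j-n}\bigr)$;
the base case $m=M-2$ uses that $[X,Y]$ is central for $X=x\varphi_{-M+1}\varphi^*_{M-2}$, $Y=x\varphi_{-M+2}\varphi^*_{M-3}$, so $Z=X+Y+\tfrac12[X,Y]$, and the sign $(-1)^{n+1}$ appears because the single bracket produces a minus sign that the fermionic computation did not (there $\{b,b\}=0$ forces the analogous term to have the opposite combinatorial origin). (3) For the inductive step, with $X=\sum_{n=1}^{M-k-1}\tfrac{(-1)^{n+1}x^n}{n}\sum_{j=n+k}^{M-1}\varphi_{-j}\varphi^*_{j-n}$ and $Y=x\varphi_{-k}\varphi^*_{k-1}$, compute that the only surviving iterated brackets are $[X^{(s)},Y]=\sum_{j=0}^{M-1-k-s}(-1)^{s}F^s_{s+j}x^{s+j+1}\varphi_{-k-s-j}\varphi^*_{k-1}$, feed this into $Z=X+Y+\sum_s C_s[X^{(s)},Y]$, collect coefficients via Lemma~\ref{le2}, and reindex to get the claimed formula with $m=k$. (4) Let $m\to-\infty$ to obtain the Proposition; and (5) repeat verbatim for $\mathbf{C}^{*}(x)$ by the obvious relabelling $i\mapsto i-1$ in the exponents.

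\textbf{Main obstacle.} The genuine work is bookkeeping the sign $(-1)^{n+1}$ and the alternating factor $(-1)^s$ in $[X^{(s)},Y]$ consistently through the induction — one has to check that each new application of $\mathrm{ad}_X$ multiplies by $-1$ (because the central term in $[\varphi_{-a}\varphi^*_{b},\varphi_{-c}\varphi^*_{d}]$ that survives is always the one with the minus sign, given the index pattern), so that $C_s(-1)^s$ combines with $F^s_t$ exactly as $C_sF^s_t$ did in the fermionic case up to the global alternation, and Lemma~\ref{le2}'s identity $\sum_{s=1}^{t}C_sF^s_t=\tfrac1{t+1}$ still closes the recursion. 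Once the sign pattern is pinned down the rest is formally identical to the proof of Proposition~\ref{pro1}, so I would simply say ``the proof is the same as that of Proposition~\ref{pro1} with $b(-i)c(i-1)$ replaced by $\varphi_{-i}\varphi^*_{i-1}$, the anticommutator replaced by the commutator, and an extra sign $(-1)^{n+1}$ tracked through \eqref{bc5}'' rather than reproducing every line.
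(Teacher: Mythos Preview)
Your plan is exactly what the paper has in mind (it simply says the proof is ``in the same manner'' as Proposition~\ref{pro1}), and steps (1), (2), (4), (5) are fine; in particular your base case is correct.

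However, the formula you wrote in step (3),
\[
[X^{(s)},Y]=\sum_{j\geq 0}(-1)^{s}F^s_{s+j}\,x^{s+j+1}\varphi_{-k-s-j}\varphi^*_{k-1},
\]
has the wrong sign: the exponent should be $s+j$ (equivalently $t$), not $s$. Your heuristic ``each application of $\mathrm{ad}_X$ multiplies by $-1$'' overlooks that the inductive $X$ already carries $(-1)^{n+1}$ in its $n$th summand, so one application of $\mathrm{ad}_X$ contributes $(-1)^{n+1}\cdot(-1)=(-1)^n$ to the $n$th mode, not a uniform $-1$. In other words $\mathrm{ad}_{X_{\rm boson}}$ is $\mathrm{ad}_{X_{\rm fermion}}$ under $x\mapsto -x$, so $(\mathrm{ad}_X)^sY$ has coefficient $(-1)^tF^s_t$ (the coefficient of $x^t$ in $f(-x)^s$). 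This matters: with your $(-1)^s$ one gets $\sum_s(-1)^sC_sF^s_t=-\tfrac{1}{t(t+1)}$, and the induction does not close. With the correct $(-1)^t$ the sign factors out of $\sum_sC_sF^s_t=\tfrac{1}{t+1}$ and, reindexing $t+1=n$, you recover precisely the missing terms $\tfrac{(-1)^{n+1}}{n}x^n\varphi_{-(k+n-1)}\varphi^*_{k-1}$ needed to go from $m=k+1$ to $m=k$. Fix this one sign and the rest of your argument goes through verbatim.
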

\begin{proposition} For any fixed $M$, we also have that
\begin{align}\label{boson1}
\mathbf{B}^{*}(x)\mathbf{B}^{*}(y)=\mathbf{B}^{*}(y)\mathbf{B}^{*}(x),~~~~\mathbf{C}^{*}(x)\mathbf{C}^{*}(y)=\mathbf{C}^{*}(y)\mathbf{C}^{*}(x).
\end{align}
\end{proposition}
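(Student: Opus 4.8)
The plan is to mimic the two-line argument used in the $bc$ fermionic case (Proposition with equation \eqref{eq5}). By the previous proposition we have already reduced $\mathbf{B}^{*}(x)$ and $\mathbf{C}^{*}(x)$ to exponentials of the single-variable generating series $\sum_{n\geq 1}\frac{(-1)^{n+1}}{n}x^n\bar{\wedge}_n$ and $\sum_{n\geq 1}\frac{(-1)^{n+1}}{n}x^n\bar{\wedge}^*_n$ respectively. So it suffices to show that the modes $\bar{\wedge}_n$ mutually commute, and likewise the $\bar{\wedge}^*_n$: once $[\bar{\wedge}_m,\bar{\wedge}_n]=0$ for all $m,n\geq 1$, the two exponentials in $x$ and $y$ commute termwise and hence $\mathbf{B}^{*}(x)\mathbf{B}^{*}(y)=\mathbf{B}^{*}(y)\mathbf{B}^{*}(x)$; the same for $\mathbf{C}^{*}$.

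First I would compute $[\bar{\wedge}_m,\bar{\wedge}_n]$ directly from the bosonic relations \eqref{B3}. Writing
\begin{align*}
[\bar{\wedge}_m,\bar{\wedge}_n]=\Bigl[\sum_{i\leq M}\varphi_{-i}\varphi^*_{i-m},\ \sum_{j\leq M}\varphi_{-j}\varphi^*_{j-n}\Bigr],
\end{align*}
only the cross-contractions $[\varphi^*_{i-m},\varphi_{-j}]$ and $[\varphi_{-i},\varphi^*_{j-n}]$ survive since the $\varphi$'s commute among themselves and the $\varphi^*$'s do too. Using $[\varphi_a,\varphi^*_b]=\delta_{a,-b}$ one finds the first sum collapses to $\sum_{i\leq M}\varphi_{-i}\varphi^*_{i-m-n}=\bar{\wedge}_{m+n}$ and the second to $-\bar{\wedge}_{m+n}$, so the commutator telescopes to zero — exactly parallel to \eqref{eq5}. (One must check that the index shifts keep the running index in the range $(-\infty,M]$; since $m,n\geq 1$ the shift $i\mapsto i$ with the partner index moved down by $m+n$ stays inside the same half-line, so no boundary term appears.) The computation for $\bar{\wedge}^*_n$ is identical with the roles of the raising/lowering shifts swapped.

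The only genuinely delicate point — and the place I would be most careful — is the convergence/well-definedness of these infinite sums and the legitimacy of rearranging them: $\bar{\wedge}_n$ is an infinite sum of quadratic monomials, so the manipulation of the commutator as a termwise telescoping sum needs the same justification as in Section~\ref{se1}, namely that on the relevant Fock space $\mathcal F^{(0)}$ (and its dual) the action is locally finite, so only finitely many terms contribute to any given vector and all rearrangements are harmless. Granting this (it is the bosonic analogue of the remark following \eqref{B4} and is handled the same way in the next subsection), the argument is complete: $[\bar{\wedge}_m,\bar{\wedge}_n]=0$ gives $\bigl[\sum_n\frac{(-1)^{n+1}}{n}x^n\bar{\wedge}_n,\ \sum_n\frac{(-1)^{n+1}}{n}y^n\bar{\wedge}_n\bigr]=0$, hence the two exponentials commute, and symmetrically for $\mathbf{C}^{*}$. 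This establishes \eqref{boson1}.
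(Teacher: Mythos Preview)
Your proposal is correct and follows essentially the same approach as the paper, which does not spell out a proof for this proposition but treats it as the direct bosonic analogue of the fermionic computation \eqref{eq5}. One cosmetic remark: the relevant Fock space in the bosonic setting is $\widetilde{\mathcal{M}}^{(0)}$ (and its dual $\widetilde{\mathcal{M}}^{*(0)}$), not $\mathcal{F}^{(0)}$; the local-finiteness justification you invoke is otherwise unchanged.
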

\begin{proposition}
Setting $\lim\limits_{M\rightarrow \infty}\bar{\wedge}_n=h_{-n},~\lim\limits_{M\rightarrow \infty}\bar{\wedge}^*_n=h_n$, then
\begin{gather}\label{eq3}
[h_m,h_n]=-m\delta_{m,-n},\\
\lim\limits_{M\rightarrow \infty}\mathbf{B}^{*}(x)=\exp\left(\sum^{\infty}_{n=1}\frac{(-1)^{n+1}}{n}x^nh_{-n}\right),~~\lim\limits_{M\rightarrow \infty}\mathbf{C}^{*}(x)=\exp\left(\sum^{\infty}_{n=1}\frac{(-1)^{n+1}}{n}x^nh_n\right),\\
\lim\limits_{M\rightarrow \infty}\mathbf{C}^{*}(x)\mathbf{B}^{*}(y)=(1-xy)\lim\limits_{M\rightarrow \infty}\mathbf{B}^{*}(y)\mathbf{C}^{*}(x).
\end{gather}
\end{proposition}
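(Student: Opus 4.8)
The plan is to mirror, in the bosonic setting, the three-fold structure that the fermionic Proposition established via BCH, the vanishing of the bracket $[\wedge_m,\wedge_n]$, and the limit computation in \eqref{bc6}. First I would establish the analog of Proposition \ref{pro1}, i.e. that for fixed $M$ one has $\mathbf{B}^{*}(x)=\exp\left(\sum_{n\geq 1}\frac{(-1)^{n+1}}{n}x^n\bar{\wedge}_n\right)$ and similarly for $\mathbf{C}^{*}(x)$: the only change from the fermionic calculation is that $[\varphi_{-j}\varphi^*_{j-1},\varphi_{-i}\varphi^*_{i-1}]$ carries a sign opposite to the fermionic bracket of $b(-j)c(j-1)$ with $b(-i)c(i-1)$, which is exactly what produces the alternating factor $(-1)^{n+1}$ after resumming with the same Bernoulli-number coefficients $C_n$ and Lemma \ref{le2}. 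This is the step labelled "The following, similar to Proposition \ref{pro1}, can be proved in the same manner" already in the excerpt, so I would simply invoke it.

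Next, for the commutativity \eqref{boson1}, the key is the bosonic analog of \eqref{eq5}: one computes $[\bar{\wedge}_m,\bar{\wedge}_n]=0$ because the two telescoping sums $\sum_{-\infty}^{M}\varphi_{-i}\varphi^*_{i-m-n}$ cancel (note that now $[\varphi_i,\varphi^*_j]$ is a scalar, so there is no normal-ordering subtlety, and the infinite sums formally cancel termwise just as in the fermionic case, with convergence guaranteed by the local finiteness on $\mathcal{F}^{*(0)}$). Then $[\sum_n\frac{(-1)^{n+1}}{n}x^n\bar{\wedge}_n,\sum_n\frac{(-1)^{n+1}}{n}y^n\bar{\wedge}_n]=0$ and the two generating series of $\mathbf{B}^{*}$ commute; likewise for $\mathbf{C}^{*}$. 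Symmetry of $\mathbf{B}^{*}(x_1)\cdots\mathbf{B}^{*}(x_N)|0\rangle$ follows formally.

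For the limit $M\to\infty$, I would set $h_{-n}=\lim_M\bar{\wedge}_n$, $h_n=\lim_M\bar{\wedge}^*_n$ and compute $[h_m,h_n]$. The cross bracket $[\bar{\wedge}_m,\bar{\wedge}^*_n]=[\sum_{i\leq M}\varphi_{-i}\varphi^*_{i-m},\sum_{j\leq M}\varphi_{-j+n}\varphi^*_j]$ expands using $[\varphi_i,\varphi^*_j]=\delta_{i,-j}$ into two sums that no longer fully cancel after the $M\to\infty$ truncation is removed; the surviving boundary term gives $-m\delta_{m,-n}$ — the sign being opposite to the fermionic $[H_m,H_n]=m\delta_{m,-n}$ precisely because $[\varphi_i,\varphi^*_j]$ has the opposite symmetry to $\{b(i),c(j)\}$. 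Having the Heisenberg relation \eqref{eq3}, the limit formulas for $\mathbf{B}^{*}(x)$ and $\mathbf{C}^{*}(x)$ are immediate from the already-established finite-$M$ exponential presentation by passing to the limit in each $\bar{\wedge}_n,\bar{\wedge}^*_n$.

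Finally, for $\lim_M\mathbf{C}^{*}(x)\mathbf{B}^{*}(y)=(1-xy)\lim_M\mathbf{B}^{*}(y)\mathbf{C}^{*}(x)$, I would apply the Baker–Campbell–Hausdorff (or the standard $e^Ae^B=e^{[A,B]}e^Be^A$ when $[A,B]$ is central) to $A=\sum_n\frac{(-1)^{n+1}}{n}x^nh_n$ and $B=\sum_n\frac{(-1)^{n+1}}{n}y^nh_{-n}$. Using $[h_m,h_{-m}]=-m$ one gets $[A,B]=\sum_{n\geq 1}\frac{1}{n}(x^n)(y^n)(-1)^{2n+2}(-n)\cdot\frac{1}{n}\cdot n$ — more carefully, $[A,B]=\sum_{n\geq1}\frac{(-1)^{n+1}}{n}\frac{(-1)^{n+1}}{n}x^ny^n[h_n,h_{-n}]=-\sum_{n\geq1}\frac{x^ny^n}{n}=\ln(1-xy)$, so the scalar prefactor is $e^{[A,B]}=1-xy$. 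This contrasts with the fermionic $\frac{1}{1-xy}$, the discrepancy coming entirely from the sign in $[h_m,h_n]$ versus $[H_m,H_n]$. The main obstacle is purely bookkeeping: making the cancellation of the telescoping sums rigorous in the $M\to\infty$ limit (i.e. justifying that the only surviving contribution to $[h_m,h_n]$ is the boundary term) and keeping track of the alternating signs throughout; no conceptually new idea beyond the fermionic case is required.
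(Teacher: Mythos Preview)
Your proposal is correct and follows precisely the route the paper intends: the paper states this proposition (together with the preceding two bosonic propositions) without explicit proof, deferring to the fermionic computations in Propositions \ref{pro1}--\ref{pro2} with the remark that the only change is the sign coming from $[\varphi_i,\varphi^*_j]$ versus $\{b(i),c(j)\}$, and your write-up is exactly that elaboration. One small slip: the local-finiteness remark should refer to $\widetilde{\mathcal{M}}^{(0)}$ rather than $\mathcal{F}^{*(0)}$, but this does not affect the argument.
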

\subsection{Actions of $\mathbf{B}^{*}(x)$ on $\widetilde{\mathcal{M}}^{(0)}$ and $\mathbf{C}^{*}(x)$ on $\widetilde{\mathcal{M}}^{*(0)}$}\label{sub6}
Let $\mathcal{M}$ (resp. $\mathcal{M}^*$) be the Fock space of the charged free bosons generated by monomials in the bosons
$\varphi_i, \varphi^*_i$ with their actions on the vacuum vector $|0\rangle$ (resp. $\langle0|$) defined by
\begin{align}\label{vac1}
\varphi_i|0\rangle=0,~~\varphi^{*}_{i+1}|0\rangle=0,~~i\geq 0 ~~(\text{resp.}~\langle0|\varphi_i=\langle0|\varphi^{*}_{i+1}=0,~~i<0).
\end{align}
\par As in section \ref{sub2}, we define the subspace $\mathcal{M}^{(0)}$ of $\mathcal{M}$ and subspace $\mathcal{M}^{*(0)}$ of $\mathcal{M}^*$ with bases
\begin{align*}
&\text{Basis}(\mathcal{M}^{(0)})=\left\{\varphi^{n_l}_{-l}\dots\varphi^{n_1}_{-1}\varphi^{*n_{-k}}_{-k}\dots\varphi^{*n_0}_{0}|0\rangle|\sum^{k}_{i=0}n_{-i}=\sum^{l}_{j=1}n_j, n_i\geq 0\right\},\\
&\text{Basis}(\mathcal{M}^{*(0)})=\left\{\langle0|\varphi^{m_0}_{0}\dots\varphi^{m_{-k}}_{k}\varphi^{*m_1}_{1}\dots\varphi^{*m_l}_{l}|
\sum^{k}_{i=0}m_{-i}=\sum^{l}_{j=1}m_j, m_i\geq 0\right\}.
\end{align*}
\par Let $\widetilde{\mathcal{M}},\widetilde{\mathcal{M}}^{*},\widetilde{\mathcal{M}}^{(0)},\widetilde{\mathcal{M}}^{*(0)}$ be the completion of $\mathcal{M},\mathcal{M}^{*},\mathcal{M}^{(0)},\mathcal{M}^{*(0)}$ (by gradation) respectively.
\par  An element $\tau\in \widetilde{\mathcal{M}}$ is called a bosonic KP tau function\cite{Kv1990,Li2011} if $\tau$ satisfies
\begin{align}\label{hirota1}
\mathrm{Res}_{z}\varphi(z)\otimes \varphi^{*}(z)(\tau\otimes \tau)=0.
\end{align}
It is known that \eqref{hirota1} has only trivial solutions in $\mathcal{M}$ \cite{JL2019}.

\begin{remark} The bosonic CKP tau functions are also related to bosonic KP tau functions
\cite{VOS2012} (see also \cite{DJKM6,Kv1990}).
\end{remark}

It follows from \eqref{vac1} that $\mathrm{Res}_{z}\varphi(z)\otimes \varphi^{*}(z)(|0\rangle\otimes |0\rangle)=0$, and
by \cite{JL2019,Wang1998} we have that
\begin{align*}
[\mathrm{Res}_{z}\varphi(z)\otimes \varphi^{*}(z),\mathbf{B}^{*}(x)\otimes \mathbf{B}^{*}(x)]=0,
\end{align*}
thus $\mathbf{B}^{*}(x_1)\mathbf{B}^{*}(x_2)\cdots \mathbf{B}^{*}(x_N)|0\rangle$ are bosonic KP tau functions for all
$x_i$, whose coefficients are KP tau functions in the completion $\widetilde{\mathcal{M}}$.
\begin{theorem}\label{th1}
For any $M$ and $\{x\}=\{x_1,\cdots, x_N\}$, we have
\begin{align}\label{e:taufcn}
\mathbf{B}^{*}(x_1)\cdots \mathbf{B}^{*}(x_N)|0\rangle=\exp\left(\sum_{\substack{1\leq m\leq M \\ 1\leq n\leq N}}(-1)^{m-1}s_{(m,1^{n-1})}\{x\}\varphi_{-m}\varphi^*_{1-n}\right)|0\rangle.
\end{align}
\end{theorem}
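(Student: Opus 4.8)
\textbf{Proof proposal for Theorem \ref{th1}.}

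The plan is to mimic exactly the argument that established the fermionic analogue \eqref{bc12} in Proposition \ref{e:genSchur}, replacing the $bc$-fermion operators with the bosonic pair $\varphi_{-i}\varphi^*_{i-1}$. The first step is to rewrite $\mathbf{B}^{*}(x)$ in a form amenable to iteration. By the proposition preceding this theorem we have $\mathbf{B}^{*}(x)=\exp\bigl(\sum_{n\geq 1}\frac{(-1)^{n+1}}{n}x^n\bar{\wedge}_n\bigr)$ with $\bar{\wedge}_n=\sum_{i\leq M}\varphi_{-i}\varphi^*_{i-n}$, and since the $\bar{\wedge}_n$ commute among themselves (just as the $\wedge_n$ do, by the bosonic analogue of \eqref{eq5}), the product $\mathbf{B}^{*}(x_1)\cdots \mathbf{B}^{*}(x_N)$ equals $\exp\bigl(\sum_{n\geq 1}\frac{(-1)^{n+1}}{n}p_n\{x\}\,\bar{\wedge}_n\bigr)$ where $p_n\{x\}=\sum_{i=1}^N x_i^n$ is the power sum. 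So the whole computation reduces to understanding the single exponential $\exp\bigl(\sum_{n\geq 1}\frac{(-1)^{n+1}}{n}p_n\{x\}\bar{\wedge}_n\bigr)$ acting on $|0\rangle$.

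Next I would use the vacuum conditions \eqref{vac1}: acting on $|0\rangle$, only the summands $\varphi_{-i}\varphi^*_{i-n}$ with $i\geq 1$ and $i-n\leq 0$, i.e. $1\leq i\leq n$, survive nontrivially, and moreover among all words in the $\varphi_{-i}\varphi^*_{i-n}$ only those whose net effect is a single creation operator $\varphi_{-m}\varphi^*_{1-n}$ on the vacuum contribute to the stated linear-in-creation form. The key point is that, because $[\varphi_a\varphi^*_b,\varphi_c\varphi^*_d]$ again lies in the span of such quadratics and because acting on $|0\rangle$ all ``higher'' terms annihilate, the exponential collapses to $\exp\bigl(\sum_{m,n}\alpha_{m,n}\varphi_{-m}\varphi^*_{1-n}\bigr)|0\rangle$ for suitable scalars $\alpha_{m,n}$; this is the bosonic shadow of the fermionic computation in \cite{Whe2012} quoted for \eqref{bc12}. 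The coefficient extraction: $\alpha_{m,n}$ is obtained by collecting, over all ways of composing the elementary moves $\varphi_{-i}\varphi^*_{i-1}$ (one per factor in the ordered product defining $\mathbf{B}^{*}(x_j)$), the monomials that build $\varphi_{-m}$ from $\varphi^*_{-(n-1)}$ through a chain $\varphi^*_{1-n}\to \varphi^*_{2-n}\to\cdots$; this is precisely the Jacobi–Trudi-type lattice-path count that yields $s_{(m,1^{n-1})}\{x\}$, up to the sign bookkeeping. The sign $(-1)^{m-1}$ (rather than $(-1)^{n-1}$ as in the fermionic case) comes from the alternating signs $(-1)^{n+1}$ in the bosonic exponent combined with the number of elementary steps needed to raise the $\varphi$-index from $-M$-type data up to $-m$; I would verify this by a direct small-$M$ check and then an induction on $m$ exactly paralleling Step 2 in the proof of Proposition \ref{pro1}.

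The main obstacle I anticipate is the precise sign and index bookkeeping: unlike the fermionic case, where $\varphi_i\varphi^*_j$ squares to zero and the combinatorics is ``multilinear,'' here the bosonic quadratics do not square to zero, so one must argue that the non-multilinear contributions (repeated use of the same elementary move) cancel or fail to reach the target state $\varphi_{-m}\varphi^*_{1-n}|0\rangle$ — this is where the vacuum conditions \eqref{vac1} and the degree grading on $\widetilde{\mathcal M}$ must be invoked carefully to truncate the infinite product to a finite computation (as flagged in Remark \ref{pro1}'s companion remark). Once the reduction to the finite window $1\leq i\leq M$, $1\leq n\leq N$ is in place and the recursion $\sum_{s}C_s F^s_t = \frac{1}{t+1}$ of Lemma \ref{le2} is applied with the alternating-sign version of $f(x)=-\log(1-x)$ replaced by $\log(1+x)=\sum_{n\geq 1}\frac{(-1)^{n+1}}{n}x^n$, the identification of the coefficient as $s_{(m,1^{n-1})}\{x\}$ follows from the generating-function identity $\sum_k h_k\{x\}z^k=\prod_i(1-x_iz)^{-1}$ together with \eqref{bc3}, just as in the fermionic proof. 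I would therefore structure the write-up as: (i) reduce to a single exponential in the commuting $\bar{\wedge}_n$; (ii) prove the bosonic analogue of \eqref{bc5} by induction using Lemma \ref{le2}; (iii) apply the vacuum conditions to project onto the linear-in-creation part; (iv) recognize the resulting coefficients via the Schur-function hook formula, tracking the sign to obtain $(-1)^{m-1}$.
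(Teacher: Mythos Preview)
Your plan diverges from the paper's proof and has a real gap at the decisive step.

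The paper argues by induction on $N$. For $N=1$ it applies the special BCH identity $e^Ae^B=e^{B+[A,B]}e^A$ (valid when $[A,B]$ commutes with $A$ and $B$) to slide the factors $\exp(x_1\varphi_{-i}\varphi^*_{i-1})$ one at a time to the right, where each isolated factor annihilates the vacuum by \eqref{vac1}; the accumulated commutator terms give $\exp\bigl(\sum_{m\le M}(-1)^{m-1}x_1^m\varphi_{-m}\varphi^*_0\bigr)|0\rangle$. For the inductive step the paper uses \eqref{boson1} to bring $\mathbf{B}^*(x_N)$ to the front, applies the hypothesis to the remaining $N-1$ factors, and then pushes the elementary factors of $\mathbf{B}^*(x_N)$ through the resulting exponential by the same BCH move. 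The Schur hook functions arise from the one-variable branching rule \eqref{bc3}, not from any lattice-path or Jacobi--Trudi count.

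Your gap is the ``collapse'' of step (iii). After step (i) you have $\exp\bigl(\sum_n c_n\bar\wedge_n\bigr)|0\rangle$. Each $\bar\wedge_n$ splits into a creation--creation piece ($1\le i\le n$) and normal-ordered creation--annihilation pieces ($i>n$ or $i\le 0$), and these two pieces do \emph{not} commute; for instance $\varphi_{-(i+n)}\varphi^*_{i}\cdot\varphi_{-i}\varphi^*_{i-n}|0\rangle=-\varphi_{-(i+n)}\varphi^*_{i-n}|0\rangle\neq 0$ for $1\le i\le n$. So the annihilation pieces cannot simply be dropped: they feed back nontrivially at every order, and indeed if you kept only the creation--creation part of $\sum_n c_n\bar\wedge_n$ the coefficient of $\varphi_{-m}\varphi^*_{1-n}$ would be $\frac{(-1)^{m+n}}{m+n-1}p_{m+n-1}\{x\}$, not $(-1)^{m-1}s_{(m,1^{n-1})}\{x\}$. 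What is actually required is a Gauss (LDU-type) factorization of the group element inside the completed $GL$ generated by the bosonic bilinears, and you do not supply one; ``vacuum conditions and degree grading'' handle only the truncation to $1\le i\le M$, not this factorization. The paper's iterated BCH move is precisely the device that performs this factorization one elementary generator at a time. Finally, your step (ii) is the Proposition immediately preceding the theorem and does no new work toward \eqref{e:taufcn}.
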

\begin{proof}
A special case of the BCH theorem says that if $[A,B]$ commutes with both $A$ and $B$, then
\begin{align*}
\exp(A)\exp(B)=\exp(B+[A, B])\exp(A).
\end{align*}
\par Using this formula to successively move $e^{x_1\varphi_{-M}\varphi^*_{M-1}}, e^{x_1\varphi_{-M+1}\varphi^*_{M-2}}, \dots$ to the right and
noting that $\varphi^*_i|0\rangle=0 (i>0)$, we have that 
\begin{align*}
\mathbf{B}^{*}(x_1)|0\rangle&=\exp\left(x_1\varphi_{-M}\varphi^*_{M-1}\right)\exp\left(x_1\varphi_{-(M-1)}\varphi^*_{M-2}\right)\mathop{\overrightarrow\prod}\limits_{i\in [1,M-2]}\exp\left(x_1\varphi_{-i}\varphi^*_{i-1}\right)|0\rangle\\
&=\exp\left(x_1(\varphi_{-(M-1)}-x_1\varphi_{-M})\varphi^*_{M-2}\right)\exp\left(x_1\varphi_{-M}\varphi^*_{M-1}\right)\mathop{\overrightarrow\prod}\limits_{i\in [1,M-2]}\exp\left(x_1\varphi_{-i}\varphi^*_{i-1}\right)|0\rangle\\
&=\exp\left((x_1\varphi_{-(M-1)}-x^2_1\varphi_{-M})\varphi^*_{M-2}\right)\mathop{\overrightarrow\prod}\limits_{i\in [1,M-2]}\exp\left(x_1\varphi_{-i}\varphi^*_{i-1}\right)|0\rangle\\
&=\exp\left((x_1\varphi_{-(M-2)}-x^2_1\varphi_{-(M-1)}+x^3_1\varphi_{-M})\varphi^*_{M-3}\right)\mathop{\overrightarrow\prod}\limits_{i\in [1,M-3]}\exp\left(x_1\varphi_{-i}\varphi^*_{i-1}\right)|0\rangle=\cdots\\
&=\exp\left(\sum_{1\leq m\leq M }(-1)^{m-1}s_{m}\{x_1\}\varphi_{-m}\varphi^*_0\right)|0\rangle,
\end{align*}
\par Assuming \eqref{e:taufcn} holds for $N-1$ and $\bar{x}=\{x_1, \dots, x_{N-1}\}$, we have that
\begin{align*}
\mathbf{B}^{*}(x_1)&\cdots \mathbf{B}^{*}(x_N)|0\rangle=\mathbf{B}^{*}(x_N)\mathbf{B}^{*}(x_1)\cdots \mathbf{B}^{*}(x_{N-1})|0\rangle\\
=&\mathop{\overrightarrow\prod}\limits_{i\in [2-N,M]}\exp\left(x_N\varphi_{-i}\varphi^*_{i-1}\right)\exp\left(\sum_{\substack{1\leq m\leq M \\ 1\leq n\leq N-1}}(-1)^{m-1}s_{(m,1^{n-1})}\{\bar{x}\}\varphi_{-m}\varphi^*_{1-n}\right)|0\rangle\\
=&\mathop{\overrightarrow\prod}\limits_{i\in [3-N,M]}\exp\left(x_N\varphi_{-i}\varphi^*_{i-1}\right)\exp\left(\sum_{\substack{1\leq m\leq M \\ 1\leq n\leq N-2}}(-1)^{m-1}s_{(m,1^{n-1})}\{\bar{x}\}\varphi_{-m}\varphi^*_{1-n}\right)\\
&\exp\left(x_N\varphi_{N-2}\varphi^*_{1-N}\right)\exp\left(\sum_{1\leq m\leq M }(-1)^{m-1}s_{(m,1^{N-2})}\{\bar{x}\}\varphi_{-m}\varphi^*_{2-N}\right)|0\rangle\\
=&\mathop{\overrightarrow\prod}\limits_{i\in [3-N,M]}\exp\left(x_N\varphi_{-i}\varphi^*_{i-1}\right)\exp\left(\sum_{\substack{1\leq m\leq M \\ 1\leq n\leq N-2}}(-1)^{m-1}s_{(m,1^{n-1})}\{\bar{x}\}\varphi_{-m}\varphi^*_{1-n}\right)\\
&\exp\left(\sum_{1\leq m\leq M }(-1)^{m-1}s_{(m,1^{N-2})}\{\bar{x}\}\varphi_{-m}(\varphi^*_{2-N}+x_N\varphi^*_{1-N})\right)|0\rangle\\
=&\mathop{\overrightarrow\prod}\limits_{i\in [1,M]}\exp\left(x_N\varphi_{-i}\varphi^*_{i-1}\right)\exp\left(\sum_{\substack{1\leq m\leq M \\ 1\leq n\leq N-1}}(-1)^{m-1}s_{(m,1^{n-1})}\{\bar{x}\}\varphi_{-m}(\varphi^*_{1-n}+x_N\varphi^*_{-n})\right)|0\rangle.
\end{align*}
\par Denote the argument of the second exp by $P=\sum_{1\leq m\leq M}P_m$, where
\begin{align*}
&P_m=\sum_{1\leq n\leq N-1}(-1)^{m-1}s_{(m,1^{n-1})}\{\bar{x}\}\varphi_{-m}(\varphi^*_{1-n}+x_N\varphi^*_{-n}).
\end{align*}

We consider $Q_1=x_N\varphi_{-1}\varphi^*_0+P_1$ and $Q_j=[x_N\varphi_{-j}\varphi^*_{j-1},Q_{j-1}]+P_j,~2\leq j\leq M$. Then by (\ref{bc3}) and (\ref{bc11}), we have
\begin{align*}
Q_1=&\left(s_{(1)}\{\bar{x}\}+x_N\right)\varphi_{-1}\varphi^*_{0}+\sum_{2\leq n\leq N-1}\left(s_{(1^n)}\{\bar{x}\}+s_{(1^{n-1})}\{\bar{x}\}x_N\right)\varphi_{-1}\varphi^*_{1-n}+s_{(1^{N-1})}\{\bar{x}\}x_N\varphi_{-1}\varphi^*_{1-N}\\
=&\sum_{1\leq n\leq N}(s_{(1^n)}\{\bar{x}\}+s_{(1^{n-1})}\{\bar{x}\}x_N)\varphi_{-1}\varphi^*_{1-n}\\
=&\sum_{1\leq n\leq N}s_{(1,1^{n-1})}\{x\}\varphi_{-1}\varphi^*_{1-n}.
\end{align*}
Similarly we have
\begin{align*}
Q_2&=[x_N\varphi_{-2}\varphi^*_1,Q_1]+P_2=-\sum_{1\leq n\leq N}\left(s_{(1^n)}\{\bar{x}\}x_N+s_{(1^{n-1})}\{\bar{x}\}x^2_N\right)\varphi_{-2}\varphi^*_{1-n}\\
&-\sum_{1\leq n\leq N-1}s_{(2,1^{n-1})}\{\bar{x}\}\varphi_{-2}\varphi^*_{1-n}
-\sum_{2\leq n\leq N}s_{(2,1^{n-2})}\{\bar{x}\}x_N\varphi_{-2}\varphi^*_{1-n}\\
=&-\left(s_0\{\bar{x}\}x^2_N+s_1\{\bar{x}\}x_N+s_2\{\bar{x}\}\right)\varphi_{-2}\varphi^*_0\\
&-\sum_{2\leq n\leq N-1}\left(s_{(1^{n-1})}\{\bar{x}\}x^2_N+s_{(1^n)}\{\bar{x}\}x_N+s_{(2,1^{n-2})}\{\bar{x}\}x_N+s_{(2,1^{n-1})}\{\bar{x}\}\right)\varphi_{-2}\varphi^*_{1-n}\\
&-\left(s_{(1^{N-1})}\{\bar{x}\}x^2_N+s_{(1^N)}\{\bar{x}\}x_N+s_{(2,1^{N-2})}\{\bar{x}\}x_N\right)\varphi_{-2}\varphi^*_{1-N}\\
=&-\sum_{1\leq n\leq N}s_{(2,1^{n-1})}\{x\}\varphi_{-2}\varphi^*_{1-n}.
\end{align*}

Continuing in this way, we have that 
\begin{align*}
Q_j=(-1)^{j-1}\sum_{1\leq n\leq N}s_{(j,1^{n-1})}\{x\}\varphi_{-j}\varphi^*_{1-n},~~~1\leq j\leq M.
\end{align*}
\par Now we compute by using the BCH formula:
\begin{align*}
\mathbf{B}^{*}(x_1)&\cdots \mathbf{B}^{*}(x_N)|0\rangle=\mathop{\overrightarrow\prod}\limits_{i\in [1,M]}\exp\left(x_N\varphi_{-i}\varphi^*_{i-1}\right)\exp\left(\sum_{1\leq j\leq M}P_j\right)|0\rangle\\
=&\mathop{\overrightarrow\prod}\limits_{i\in [2,M]}\exp\left(x_N\varphi_{-i}\varphi^*_{i-1}\right)\exp\left(\sum_{1\leq j\leq M}P_j+x_N\varphi_{-1}\varphi^*_{0}\right)|0\rangle\\
=&\mathop{\overrightarrow\prod}\limits_{i\in [3,M]}\exp\left(x_N\varphi_{-i}\varphi^*_{i-1}\right)\exp\left(x_N\varphi_{-2}\varphi^*_1\right)\exp\left(Q_1+\sum_{2\leq j\leq M}P_j\right)|0\rangle\\
=&\mathop{\overrightarrow\prod}\limits_{i\in [3,M]}\exp\left(x_N\varphi_{-i}\varphi^*_{i-1}\right)\exp\left(Q_1+Q_2+\sum_{3\leq j\leq M}P_j\right)|0\rangle=\cdots\\
=&\exp\left(\sum_{1\leq j\leq M}Q_j\right)|0\rangle=\exp\left(\sum_{\substack{1\leq m\leq M \\ 1\leq n\leq N}}(-1)^{m-1}s_{(m,1^{n-1})}\{x\}\varphi_{-m}\varphi^*_{1-n}\right)|0\rangle.
\end{align*}
\end{proof}
\begin{corollary}\label{cor1} For $\{x\}=\{x_1,\cdots, x_{N}\}$ and fixed $M$,  we also have the expansion
\begin{align}
\langle 0|\mathbf{C}^{*}(x_N)\cdots\mathbf{C}^{*}(x_1)=\langle 0|\exp\left(\sum_{\substack{1\leq m\leq M \\ 1\leq n\leq N}}(-1)^{m-1}s_{(m,1^{n-1})}\{x\}\varphi_{n-1}\varphi^*_{m}\right).
\end{align}
\end{corollary}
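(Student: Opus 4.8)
The plan is to mirror the proof of Theorem \ref{th1} (equation \eqref{e:taufcn}) rather than redo the full induction. Since $\mathbf{C}^{*}(x)$ is built from the same quadratic atoms $\varphi_{-i+1}\varphi^*_i$ with a shifted index, and since the dual vacuum condition \eqref{vac1} reads $\langle0|\varphi_i=\langle0|\varphi^{*}_{i+1}=0$ for $i<0$, the right action of $\mathbf{C}^{*}(x_N)\cdots\mathbf{C}^{*}(x_1)$ on $\langle0|$ is formally the ``transpose/adjoint'' of the left action of $\mathbf{B}^{*}(x_1)\cdots\mathbf{B}^{*}(x_N)$ on $|0\rangle$. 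So the first step is to set up the correct anti-automorphism (or simply argue directly): moving each factor $\exp(x_k\varphi_{-i+1}\varphi^*_i)$ leftward past $\langle0|$, the terms with $i>0$ give $\varphi^*_i$ acting on the right of $\langle0|$, which vanishes once $i\geq 1$ — wait, we need $\langle0|\varphi^*_{i+1}=0$ for $i<0$, i.e. $\langle0|\varphi^*_j=0$ for $j\leq0$; so it is the factors with $-i+1\le 0$, equivalently $i\ge 1$, whose $\varphi_{-i+1}$-part kills $\langle0|$ from the right (since $\langle0|\varphi_m=0$ for $m<0$, and for $m=0$ we have $\langle0|\varphi_0$; here $-i+1=0$ means $i=1$). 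I would carefully pin down which half of the product acts trivially and which half survives, exactly paralleling the ``$\varphi^*_i|0\rangle=0$ for $i>0$'' step used for $\mathbf{B}^{*}$.

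Second, I would run the same telescoping BCH computation. Define the partial quantities $Q_j$ by the identical recursion $Q_1 = x_N\varphi_0\varphi^*_1 + P_1$, $Q_j = [x_N\varphi_{-j+1}\varphi^*_j, Q_{j-1}] + P_j$ (with the index-shifted $P_m$ appropriate to $\mathbf{C}^{*}$), using $\exp(A)\exp(B)=\exp(B+[A,B])\exp(A)$ whenever $[A,B]$ is central — which holds here because any two of our atoms have a commutator that is again a single atom (or zero), and iterated commutators terminate. By the same Schur-function identities \eqref{bc3} and \eqref{bc11} one gets $Q_j = (-1)^{j-1}\sum_{1\le n\le N} s_{(j,1^{n-1})}\{x\}\,\varphi_{n-1}\varphi^*_j$, where the shift $\varphi_{-m}\mapsto\varphi_{m}$ in the $\varphi$-slot and $\varphi^*_{1-n}\mapsto\varphi^*_{n}$... more precisely the bookkeeping sends $\varphi_{-m}\varphi^*_{1-n}$ in the $\mathbf{B}^{*}$ answer to $\varphi_{n-1}\varphi^*_{m}$ in the $\mathbf{C}^{*}$ answer. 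I'd verify this index dictionary once at the level of the atoms and then the whole induction transcribes verbatim.

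Alternatively — and this is probably the cleaner writeup — I would introduce the linear anti-involution $\sigma$ on the Heisenberg--Clifford-type algebra generated by the $\varphi_i,\varphi^*_i$ determined by $\sigma(\varphi_i)=\varphi^*_{-i}$... let me instead use the map $\varphi_i\leftrightarrow\varphi^*_{?}$ that (i) reverses products, (ii) exchanges $|0\rangle\leftrightarrow\langle0|$ compatibly with \eqref{vac1}, and (iii) sends $\mathbf{B}^{*}(x)$ to $\mathbf{C}^{*}(x)$ with the product order reversed. Applying $\sigma$ to \eqref{e:taufcn} then yields the Corollary immediately, since $\sigma$ turns $\exp(\sum(-1)^{m-1}s_{(m,1^{n-1})}\{x\}\varphi_{-m}\varphi^*_{1-n})|0\rangle$ into $\langle0|\exp(\sum(-1)^{m-1}s_{(m,1^{n-1})}\{x\}\varphi_{n-1}\varphi^*_{m})$ (the exponent's argument is a sum of commuting terms, so the reversal of the product inside the exp costs nothing, and the scalars $s_{(m,1^{n-1})}\{x\}$ are fixed by $\sigma$).

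The main obstacle I anticipate is purely bookkeeping: getting the index shifts right in all three places simultaneously — the definition \eqref{CB2} of $\mathbf{C}^{*}$, the dual vacuum annihilation conditions \eqref{vac1}, and the final pairing of $\varphi_{n-1}$ with $\varphi^*_m$. In particular one must check that the atoms appearing in $\mathbf{C}^{*}$ still have the ``central commutator'' property (so that $\exp(A)\exp(B)=\exp(B+[A,B])\exp(A)$ applies and the BCH series truncates), and that the same Schur recursion \eqref{bc3} drives the collapse of the $P_j$'s into the $Q_j$'s. There is no new analytic or algebraic difficulty beyond Theorem \ref{th1}; the content is confirming that the anti-automorphism argument is legitimate on the completion $\widetilde{\mathcal M}^{*(0)}$, i.e. that all infinite products and sums involved act with only finitely many nonzero contributions on each graded piece, which follows from the grading argument already used for $\mathbf{B}(x)$ and $\mathbf{B}^{*}(x)$.
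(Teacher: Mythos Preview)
Your proposal is correct and matches the paper's intent: the paper states Corollary~\ref{cor1} without proof, implicitly relying on the fact that the computation for $\mathbf{C}^{*}$ on $\langle0|$ is formally dual to that for $\mathbf{B}^{*}$ on $|0\rangle$ in Theorem~\ref{th1}. Your anti-involution route is the clean way to make this precise---with $\sigma(\varphi_j)=\varphi^*_{-j}$, $\sigma(\varphi^*_j)=\varphi_{-j}$ one checks $\sigma$ is an anti-automorphism sending the atoms $\varphi_{-i}\varphi^*_{i-1}$ of $\mathbf{B}^{*}$ to the atoms $\varphi_{-i+1}\varphi^*_i$ of $\mathbf{C}^{*}$, intertwines the vacuum conditions \eqref{vac1}, and carries $\varphi_{-m}\varphi^*_{1-n}$ to $\varphi_{n-1}\varphi^*_m$---so applying $\sigma$ to \eqref{e:taufcn} gives the corollary immediately.
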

\subsection{Correlation function $\langle 0|\mathbf{C}^{*}(x_N)\cdots\mathbf{C}^{*}(x_1)\mathbf{B}^{*}(y_1)\cdots \mathbf{B}^{*}(y_N)|0\rangle$}\label{sub7}
Setting $\langle0|1|0\rangle=1$, for $\langle \uline{m}|=\langle0|\varphi^{m_0}_{0}\dots\varphi^{m_{-k}}_{k}\varphi^{*m_1}_{1}\dots\varphi^{*m_l}_{l}$ and $|\uline{n}\rangle=\varphi^{n_l}_{-l}\dots\varphi^{n_1}_{-1}\varphi^{*n_{-k}}_{-k}\dots\varphi^{*n_0}_{0}|0\rangle$, we have
\begin{align}\label{B2}
\langle \uline{m}|\uline{n}\rangle=(-1)^{\sum^l_{i=1}n_i}\prod^l_{s=-k}\delta_{m_s,n_s}m_s!.
\end{align}
\begin{theorem}\label{th2}
For any positive integer $M$ and $\{x\}=\{x_1,\cdots, x_{N}\}$, $\{y\}=\{y_1,\cdots, y_{N}\}$, one has that
\begin{align}\label{eq2}
\langle 0|\mathbf{C}^{*}(x_N)\cdots\mathbf{C}^{*}(x_1)\mathbf{B}^{*}(y_1)\cdots \mathbf{B}^{*}(y_N)|0\rangle=\frac{1}{\sum\limits_{\mu\subseteq[N,M]}s_\mu\{x\}s_\mu\{y\}}.
\end{align}
In particular, when $M\rightarrow\infty$, we get another proof of the Cauchy identity:
\begin{align}\label{cauchy2}
\langle 0|\mathbf{C}^{*}(x_N)\cdots\mathbf{C}^{*}(x_1)\mathbf{B}^{*}(y_1)\cdots \mathbf{B}^{*}(y_N)|0\rangle=\prod_{i, j=1}^N(1-x_iy_j)=\frac{1}{\sum\limits_{l(\mu)\leq N}s_\mu\{x\}s_\mu\{y\}}.
\end{align}
\end{theorem}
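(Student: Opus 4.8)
The plan is to turn the correlation function into a bosonic vacuum expectation value $\langle 0|e^{X}e^{Y}|0\rangle$ and evaluate it by Wick's theorem. By Corollary \ref{cor1} and Theorem \ref{th1} (the latter applied with variables $y_1,\dots,y_N$),
\[
\langle 0|\mathbf{C}^{*}(x_N)\cdots\mathbf{C}^{*}(x_1)=\langle 0|e^{X},\qquad
\mathbf{B}^{*}(y_1)\cdots\mathbf{B}^{*}(y_N)|0\rangle=e^{Y}|0\rangle,
\]
where $X=\sum_{1\le m\le M,\,1\le n\le N}(-1)^{m-1}s_{(m,1^{n-1})}\{x\}\,\varphi_{n-1}\varphi^*_{m}$ and $Y=\sum_{1\le m\le M,\,1\le n\le N}(-1)^{m-1}s_{(m,1^{n-1})}\{y\}\,\varphi_{-m}\varphi^*_{1-n}$. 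First I would record the elementary commutator consequences of \eqref{B3} and \eqref{vac1}: with $a_n:=\varphi_{n-1}$, $b_m:=\varphi^*_m$, $a^\dagger_n:=\varphi^*_{1-n}$, $b^\dagger_m:=\varphi_{-m}$ ($1\le n\le N$, $1\le m\le M$) one has $a_n|0\rangle=b_m|0\rangle=0$, $\langle 0|a^\dagger_n=\langle 0|b^\dagger_m=0$, all commutators between an $a$-mode and a $b$-mode vanish, and $[a_n,a^\dagger_{n'}]=\delta_{n,n'}$, $[b_m,b^\dagger_{m'}]=-\delta_{m,m'}$; thus these operators act as two independent Weyl algebras with $|0\rangle$ behaving as a tensor-product vacuum for Wick's theorem, the sign $-\delta_{m,m'}$ being exactly the origin of the factor $(-1)^{\sum n_i}$ in the pairing \eqref{B2}. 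Setting $S^x_{nm}=(-1)^{m-1}s_{(m,1^{n-1})}\{x\}$, $S^y_{nm}=(-1)^{m-1}s_{(m,1^{n-1})}\{y\}$, we have $X=\sum_{n,m}S^x_{nm}a_nb_m$ and $Y=\sum_{n,m}S^y_{nm}a^\dagger_nb^\dagger_m$.

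Since $X$ lowers, and $Y$ raises, both the $a$-particle number and the $b$-particle number by one, only equal powers survive: $\langle 0|e^Xe^Y|0\rangle=\sum_{p\ge 0}\frac{1}{(p!)^2}\langle 0|X^pY^p|0\rangle$. Expanding $X^p$, $Y^p$ (all the $a$'s and $b$'s commuting among themselves and with each other) and applying Wick's theorem separately in the $a$- and the $b$-sectors — the vacuum expectations of the $a$'s/$a^\dagger$'s and of the $b$'s/$b^\dagger$'s becoming permanents, the latter carrying an extra $(-1)^p$ — and then reindexing to collapse the double permutation sum into a single one, I expect
\[
\langle 0|X^pY^p|0\rangle=(-1)^p\,p!\sum_{\rho\in S_p}\ \prod_{C}\operatorname{tr}\bigl(U^{|C|}\bigr),\qquad
U:=(S^x)^{T}S^y,\quad U_{kl}=(-1)^{k+l}\sum_{n=1}^{N}s_{(k,1^{n-1})}\{x\}\,s_{(l,1^{n-1})}\{y\},
\]
where $C$ runs over the cycles of $\rho$. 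Summing over $p$ and invoking the classical cycle-index identity $\sum_{p\ge 0}\frac{(-1)^p}{p!}\sum_{\rho\in S_p}\prod_{C}\operatorname{tr}\bigl(A^{|C|}\bigr)=\exp\bigl(\sum_{l\ge 1}\tfrac1l\operatorname{tr}((-A)^l)\bigr)=\det(I+A)^{-1}$ with $A=U$ then yields $\langle 0|e^Xe^Y|0\rangle=\det(I+U)^{-1}$.

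It remains to identify $\det(I+U)$ with the claimed denominator. Writing $U=DTD$ with $D=\operatorname{diag}\bigl((-1)^k\bigr)_{1\le k\le M}$ and $T$ the matrix of \eqref{bc4}, and using $D^2=I$, we get $\det(I+U)=\det(I+T)$; by the results of Section \ref{se1} (Theorem \ref{th3} and the final Proposition there) this equals $\sum_{\mu\subseteq[N,M]}s_\mu\{x\}s_\mu\{y\}$, which proves \eqref{eq2}. For \eqref{cauchy2} one lets $M\to\infty$: the denominator tends to $\sum_{l(\mu)\le N}s_\mu\{x\}s_\mu\{y\}=\prod_{i,j=1}^{N}(1-x_iy_j)^{-1}$ by the Cauchy identity already obtained in \eqref{eq1b}, giving the product formula. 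The argument is mostly bookkeeping; the one delicate point is to carry the sign in $[b_m,b^\dagger_{m'}]=-\delta_{m,m'}$ correctly through the permanent expansions, so that the cycle sums assemble into $\det(I+U)^{-1}$ rather than $\det(I\mp U)^{\pm1}$, after which the diagonal conjugation $U=DTD$ and the computations of Section \ref{se1} close the proof.
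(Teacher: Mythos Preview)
Your argument is correct and reaches the same intermediate identity $\langle 0|e^{X}e^{Y}|0\rangle=\det(I+T)^{-1}$ as the paper, but by a genuinely different route. The paper's proof (Appendix~A) introduces auxiliary variables $z_1,\dots,z_M$, rewrites the vacuum expectation as a constant-term extraction $CT\bigl(\prod_{j}Z_j^{-1}\bigr)$ with $Z_j=\sum_i(\delta_{ij}+T_{ij})z_j/z_i$, and then invokes a lemma (a special case of the MacMahon Master Theorem, proved there by induction on~$M$) identifying this constant term with $\det(I+T)^{-1}$. You instead expand $\langle 0|X^pY^p|0\rangle$ by Wick's theorem in the two independent Heisenberg sectors, collapse the double permutation sum to a single one, recognise the cycle-index form $\sum_{p}\frac{(-1)^p}{p!}\sum_{\rho\in S_p}\prod_{C}\operatorname{tr}(U^{|C|})=\det(I+U)^{-1}$, and finish with the diagonal conjugation $U=DTD$. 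Your approach is shorter and more transparent to anyone fluent with bosonic Wick calculus and the exponential formula; the paper's approach has the virtue of making the MacMahon Master Theorem explicit and avoids tracking the sign $[b_m,b^{\dagger}_{m'}]=-\delta_{m,m'}$ through permanent expansions (a point you correctly flag as the only delicate step). Both proofs conclude identically by citing the fermionic identity $\det(I+T)=\sum_{\mu\subseteq[N,M]}s_\mu\{x\}s_\mu\{y\}$ from Section~\ref{se1}.
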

The proof is contained in Appendix A.

Comparing the right sides of (\ref{eq1}) and (\ref{eq2}), we have
\begin{corollary}
For a positive integer $M$,
\begin{align}
\langle 0|\mathbf{C}^{*}(x_N)\cdots\mathbf{C}^{*}(x_1)\mathbf{B}^{*}(y_1)\cdots \mathbf{B}^{*}(y_N)|0\rangle=\frac{1}{\langle 0|\mathbf{C}(x_N)\cdots\mathbf{C}(x_1)\mathbf{B}(y_1)\cdots \mathbf{B}(y_N)|0\rangle} .
\end{align}
\end{corollary}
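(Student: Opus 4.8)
\textbf{Proof plan for Theorem \ref{th2}.}
The plan is to reduce the bosonic correlation function to a determinant, just as in the fermionic case, and then recognize that the relevant matrix is the inverse (up to the identity) of the matrix $T$ appearing in the $bc$ computation. First I would use Corollary \ref{cor1} and Theorem \ref{th1} to write both bra and ket in closed form: the vector $\mathbf{B}^{*}(y_1)\cdots\mathbf{B}^{*}(y_N)|0\rangle$ equals $\exp\bigl(\sum_{m,n}(-1)^{m-1}s_{(m,1^{n-1})}\{y\}\varphi_{-m}\varphi^{*}_{1-n}\bigr)|0\rangle$, and similarly for the dual vector in terms of $\varphi_{n-1}\varphi^{*}_{m}$. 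Introducing the abbreviations
\begin{align*}
\alpha_m=\sum_{1\leq n\leq N}(-1)^{n-1}s_{(m,1^{n-1})}\{x\}\varphi_{n-1},\qquad
\alpha^{*}_m=\sum_{1\leq n\leq N}(-1)^{n-1}s_{(m,1^{n-1})}\{y\}\varphi^{*}_{1-n},
\end{align*}
one has $[\alpha_k,\alpha^{*}_t]=\sum_{1\leq n\leq N}s_{(k,1^{n-1})}\{x\}s_{(t,1^{n-1})}\{y\}=T_{kt}$, the same matrix entries as in \eqref{bc4}. Then the correlation function becomes $\langle 0|\exp\bigl(\sum_m (-1)^{m-1}\alpha_m\varphi^{*}_m\bigr)\exp\bigl(\sum_m(-1)^{m-1}\varphi_{-m}\alpha^{*}_m\bigr)|0\rangle$ (after absorbing the signs, which recombine since $(-1)^{m-1}(-1)^{m-1}=1$).

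The next step is the purely Heisenberg-algebra computation: moving the annihilation half to the right past the creation half. Because all the $\varphi_{-m},\varphi_{n-1},\varphi^{*}_m,\varphi^{*}_{1-n}$ involved are finitely many oscillators with $[\varphi_i,\varphi^{*}_j]=\delta_{i,-j}$, this is a finite-dimensional Gaussian (Wick) computation. Concretely, writing $U=\sum_m\varphi_{-m}\otimes(\text{coefficients in }y)$ and $V=\sum_m(\text{coefficients in }x)\otimes\varphi^{*}_m$, one gets a vacuum expectation of the form $\det(I-T)^{-1}$ by the standard bosonic contraction formula: each contraction of a $\varphi$ with a $\varphi^{*}$ produces a factor of an entry of $T$, and summing over all ways of pairing (which for bosons are counted with multiplicities, producing a geometric-type series rather than a finite sum) yields $\prod$-type resummation equal to $1/\det(I-T)$ — equivalently $\sum_k \mathrm{tr}(T^k)/k$ in the exponent via $-\log\det(I-T)=\sum_k\mathrm{tr}(T^k)/k$. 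This is exactly the reciprocal of the fermionic answer $\det(I+T)$; note the sign discrepancy $I-T$ versus $I+T$ is an artifact of the $(-1)^{n-1}$ weights and the $(-1)^{m-1}$ factors — I would track these carefully so that the bosonic matrix is genuinely $-T$ relative to the fermionic one, giving $\det(I-(-T))^{-1}=\det(I+T)^{-1}$. Combined with the Proposition right before this subsection, $\det(I+T)=\sum_{\mu\subseteq[N,M]}s_\mu\{x\}s_\mu\{y\}$, this produces \eqref{eq2}. The limit \eqref{cauchy2} then follows from the Cauchy identity \eqref{eq1b} already established for the fermionic side.

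An alternative, more combinatorial route — which is probably what the Appendix does — avoids determinants entirely: expand $\mathbf{B}^{*}(y_1)\cdots\mathbf{B}^{*}(y_N)|0\rangle$ over the monomial basis of $\widetilde{\mathcal M}^{(0)}$ using Theorem \ref{th1}, pair against the dual expansion using \eqref{B2} (which carries the factorials $m_s!$ and the sign $(-1)^{\sum n_i}$), and observe that these factorials are exactly what converts the would-be ``Schur times Schur'' sum into its reciprocal. In effect $\langle\uline m|\uline n\rangle$ weights a bosonic monomial $\prod\varphi_{-i}^{n_i}$ by $\prod n_i!$, so the expansion of the exponential $\exp(\sum c_i\varphi_{-i}\varphi^{*}_{-i})$ against its dual gives $\exp(\sum c_i c_i^{*})$-type behaviour rather than $\prod(1+c_ic_i^{*})$; resumming the resulting multiple series over partitions, with the Jacobi–Trudi expression for $s_\mu$, collapses to $\bigl(\sum_{\mu\subseteq[N,M]}s_\mu\{x\}s_\mu\{y\}\bigr)^{-1}$ by a formal-power-series identity (essentially $\prod(1-x_iy_j)=1/\prod\frac1{1-x_iy_j}$ truncated to the rectangle). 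The main obstacle in either approach is bookkeeping: keeping the signs $(-1)^{m-1}$ and $(-1)^{n-1}$, the factorials from \eqref{B2}, and the index shifts $\varphi^{*}_{1-n}$ versus $\varphi^{*}_{-n}$ all consistent, so that the final matrix is exactly $-T$ and not something off by a transpose or a shift; once the algebra is correctly aligned with the fermionic computation of the preceding subsection, the reciprocity is forced.
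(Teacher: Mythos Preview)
Your first approach is exactly the paper's strategy: package the two exponentials via operators $D_m=\sum_n s_{(m,1^{n-1})}\{x\}\varphi_{n-1}$ and $F_m=\sum_n s_{(m,1^{n-1})}\{y\}\varphi^{*}_{1-n}$ (your $\alpha_m,\alpha^{*}_m$ up to the harmless $(-1)^{n-1}$ redistribution), observe $[D_i,F_j]=T_{ij}$ with the same matrix $T$ as in \eqref{bc4}, and aim for $\det(I+T)^{-1}$, after which the Proposition giving $\det(I+T)=\sum_{\mu\subseteq[N,M]}s_\mu\{x\}s_\mu\{y\}$ finishes the job.

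Where you diverge from the paper is in the execution of the ``purely Heisenberg-algebra computation''. You invoke an unspecified bosonic Wick/Gaussian identity and promise to fix the sign to land on $\det(I+T)^{-1}$. The paper (Appendix~A) does not do this: it introduces auxiliary variables $z_1,\dots,z_M$, rewrites the vacuum expectation as the constant term
\[
CT\Bigl(\langle 0|\prod_{i=1}^{M}\exp(-D_i/z_i)\prod_{j=1}^{M}\frac{1}{1-z_jF_j}|0\rangle\Bigr)=CT\Bigl(\prod_{j=1}^{M}\frac{1}{Z_j}\Bigr),\qquad Z_j=\sum_{i}(\delta_{ij}+T_{ij})\frac{z_j}{z_i},
\]
and then proves, as a separate self-contained proposition (a special case of the MacMahon Master Theorem), that this constant term equals $\det(I+T)^{-1}$. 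The sign $(-1)^{\sum k_m}$ coming from the pairing \eqref{B2} is what produces the $-D_i$ in the exponent, so there is no $I-T$ versus $I+T$ ambiguity to resolve by hand. Your black-box Wick argument would ultimately amount to the same determinant identity, but the constant-term lemma is the actual content the paper supplies. Your second guess --- that the appendix avoids determinants and resums a partition sum combinatorially --- is not what happens.
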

\begin{corollary}
Taking the limit $M\rightarrow \infty,N\rightarrow \infty$ we get that
\begin{align}
\lim_{M, N\rightarrow \infty}\langle 0|\mathbf{C}^{*}(x_N)\cdots\mathbf{C}^{*}(x_1)\mathbf{B}^{*}(y_1)\cdots \mathbf{B}^{*}(y_N)|0\rangle=\prod^{\infty}_{i,j=1}(1-x_iy_i)=\frac{1}{\sum\limits_\mu s_\mu(x)s_\mu(y)},
\end{align}
where the sum is over all partitions $\mu$. 
In particular,
\begin{align*}
\lim_{M, N\rightarrow \infty}\langle 0|\mathbf{C}^{*}(z^{N-\frac{1}{2}})\cdots\mathbf{C}^{*}(z^{1-\frac{1}{2}})\mathbf{B}^{*}(z^{1-\frac{1}{2}})\cdots \mathbf{B}^{*}(z^{N-\frac{1}{2}})|0\rangle=\prod^{\infty}_{i=1}
(1-z^i)^i.
\end{align*}
\end{corollary}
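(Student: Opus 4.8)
The plan is to bring the correlation function into the shape $\langle 0|e^{U}e^{V}|0\rangle$ for two explicit bilinear operators $U,V$ in the charged bosons, and then to evaluate this vacuum expectation by the bosonic Wick theorem together with MacMahon's master theorem; a reciprocal determinant will appear precisely because one of the relevant oscillator pairs has a commutator with the ``wrong'' sign.

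First I would invoke Theorem~\ref{th1} and Corollary~\ref{cor1} to write
\[
\langle 0|\mathbf{C}^{*}(x_N)\cdots\mathbf{C}^{*}(x_1)\mathbf{B}^{*}(y_1)\cdots \mathbf{B}^{*}(y_N)|0\rangle=\langle 0|e^{U}e^{V}|0\rangle ,
\]
with $U=\sum_{1\le m\le M,\ 1\le n\le N}(-1)^{m-1}s_{(m,1^{n-1})}\{x\}\,\varphi_{n-1}\varphi^{*}_{m}$ and $V=\sum_{1\le m\le M,\ 1\le n\le N}(-1)^{m-1}s_{(m,1^{n-1})}\{y\}\,\varphi_{-m}\varphi^{*}_{1-n}$. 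A direct check from \eqref{B3} shows that all the bilinears occurring in $U$ commute among themselves, and likewise for $V$; grouping the summands by $m$ and setting $\Phi^{x}_{m}=\sum_{n}s_{(m,1^{n-1})}\{x\}\varphi_{n-1}$, $\Phi^{y}_{m}=\sum_{n}s_{(m,1^{n-1})}\{y\}\varphi^{*}_{1-n}$, one obtains $e^{U}=\prod_{m}\exp\!\big((-1)^{m-1}\varphi^{*}_{m}\Phi^{x}_{m}\big)$ and $e^{V}|0\rangle=\prod_{m}\exp\!\big((-1)^{m-1}\varphi_{-m}\Phi^{y}_{m}\big)|0\rangle$, and since $\varphi^{*}_{m}$ commutes with $\Phi^{x}_{m}$ (and $\varphi_{-m}$ with $\Phi^{y}_{m}$), each factor expands as $\sum_{k\ge0}\tfrac{(-1)^{(m-1)k}}{k!}(\varphi^{*}_{m})^{k}(\Phi^{x}_{m})^{k}$, and similarly for $V$.

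Next I would exploit that the modes $\{\varphi^{*}_{m},\varphi_{-m}:m\ge1\}$ are disjoint from, and commute with, the modes $\{\varphi_{n-1},\varphi^{*}_{1-n}:1\le n\le N\}$, so the relevant Fock space factors and $\langle 0|e^{U}e^{V}|0\rangle$ splits into two independent channels. In the $\varphi^{(*)}$-channel, governed by $[\varphi^{*}_{m},\varphi_{-m'}]=-\delta_{m,m'}$, Wick's theorem forces the two exponents to carry equal powers $l_{m}$ and contributes $\prod_{m}(-1)^{l_{m}}l_{m}!$ --- the sign arising from the negative commutator. In the $\Phi$-channel the commutator is the \emph{scalar}
\[
[\Phi^{x}_{m},\Phi^{y}_{m'}]=\sum_{1\le n\le N}s_{(m,1^{n-1})}\{x\}\,s_{(m',1^{n-1})}\{y\}=T_{mm'},
\]
$T$ being exactly the $M\times M$ matrix of \S\ref{sub4}, and Wick's theorem gives $\langle 0|\prod_{m}(\Phi^{x}_{m})^{l_{m}}\prod_{m'}(\Phi^{y}_{m'})^{l_{m'}}|0\rangle=\operatorname{perm}\widehat{T}_{\vec l}$, where $\widehat{T}_{\vec l}$ is the matrix whose $(m,m')$ block is the $l_{m}\times l_{m'}$ matrix with every entry $T_{mm'}$. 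Collecting the two channels (the powers of $-1$ cancel pairwise) one arrives at
\[
\langle 0|e^{U}e^{V}|0\rangle=\sum_{\vec l\in\mathbb{Z}_{\ge0}^{M}}\frac{1}{\prod_{m}l_{m}!}\,\operatorname{perm}\!\big(\widehat{(-T)}_{\vec l}\big),
\]
which by MacMahon's master theorem equals $\det\big(I-(-T)\big)^{-1}=\det(I+T)^{-1}$. (Equivalently, one may expand $\langle 0|e^{U}e^{V}|0\rangle=\sum_{j\ge0}(j!)^{-2}\langle 0|U^{j}V^{j}|0\rangle$, evaluate $\langle 0|U^{j}V^{j}|0\rangle=(-1)^{j}(j!)^{2}h_{j}$ of the eigenvalues of $T$ by a cycle count over $S_{j}$ together with $\sum_{\lambda\vdash j}z_{\lambda}^{-1}p_{\lambda}=h_{j}$, and then sum the series via $\sum_{j\ge0}(-1)^{j}h_{j}(\xi)=\prod_{i}(1+\xi_{i})^{-1}$.)

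Finally, the identity $\det(I+T)=\sum_{\mu\subseteq[N,M]}s_{\mu}\{x\}s_{\mu}\{y\}$ established in \S\ref{sub4} (cf. Theorem~\ref{th3}) yields \eqref{eq2}, and letting $M\to\infty$ and using the Cauchy identity $\sum_{l(\mu)\le N}s_{\mu}\{x\}s_{\mu}\{y\}=\prod_{i,j}(1-x_{i}y_{j})^{-1}$ gives \eqref{cauchy2}. The main obstacle is the middle step: one has to verify carefully that the two contraction channels genuinely decouple, so that the normalization factorials and the decisive sign $(-1)^{\sum_{m}l_{m}}$ coming from the negative-norm pair $\{\varphi^{*}_{m},\varphi_{-m}\}$ are accounted for correctly, and then to recognize the resulting sum of permanents of block matrices as an instance of MacMahon's master theorem (which, like the Cauchy identity, can be proved along the way).
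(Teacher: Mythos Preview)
Your proposal is essentially a reproof of Theorem~\ref{th2} (the finite $M,N$ identity \eqref{eq2}), not of the Corollary actually in question. The paper treats this Corollary as an immediate consequence of Theorem~\ref{th2}: once \eqref{eq2} and \eqref{cauchy2} are in hand, one simply lets $N\to\infty$ and, for the second formula, substitutes $x_i=y_i=z^{i-1/2}$ and regroups $\prod_{i,j\ge1}(1-z^{i+j-1})=\prod_{k\ge1}(1-z^{k})^{k}$. Your write-up stops at \eqref{cauchy2} and never performs either of these last steps, so strictly speaking it does not conclude the stated Corollary.

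As for the body of your argument, it is correct and tracks the paper's own proof of Theorem~\ref{th2} in Appendix~A quite closely. Both arguments start from the expressions of Theorem~\ref{th1} and Corollary~\ref{cor1}, isolate the same scalar contractions $T_{mm'}=\sum_{n}s_{(m,1^{n-1})}\{x\}s_{(m',1^{n-1})}\{y\}$, and reduce the vacuum expectation to $\det(I+T)^{-1}$ via MacMahon's master theorem (the paper even footnotes this identification). The only difference is packaging: the paper introduces auxiliary variables $z_1,\dots,z_M$ and evaluates a constant term through its Lemma~\ref{le1} and Proposition~A.1, whereas you organize the same combinatorics as a sum of permanents coming from Wick contractions in the $\Phi$-channel. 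Your remark about the crucial sign $(-1)^{\sum_m l_m}$ arising from $[\varphi^{*}_{m},\varphi_{-m}]=-1$ is exactly what produces the $\det(I+T)^{-1}$ rather than $\det(I-T)^{-1}$, and matches the sign bookkeeping in the paper's Appendix~A computation. So the substantive content is the same; you have simply reproved the theorem the Corollary rests on, and should add the two trivial closing steps (the $N\to\infty$ limit and the plane-partition specialization) to actually address the stated result.
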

\section{Neutral fermions}
This section is parallel to Section \ref{se1}. 
It is known that the usual KP tau functions can be formulated in terms of Schur functions, which are integral
combinations of the power sum $p_r=\sum_ix_i^r$. There is another family of well-known symmetric functions called Schur's Q-function $Q_{\nu}$, which
are orthogonal rational linear combination of the odd degree
power sum symmetric function $p_{2r-1}$. It turns out that the BKP tau functions can be realized by the Schur Q-functions \cite{You1990}.
Foda and Wheeler\cite{FWZ2007} used the QISM to show Bethe eigenvectors of the $i$-boson model are BKP tau functions (see Appendix C), we aim to give a new derivation of this beautiful result.

We start with the neutral fermions defined by infinite operators $\{\phi_m\}_{m\in \mathbb{Z}}$ with the relations
\begin{align}
\{\phi_m,\phi_n\}=2(-1)^m\delta_{m+n,0}.
\end{align}
\par The Fock space $\mathcal{F}_{\phi}$ and its dual $\mathcal{F}^{*}_{\phi}$ are the vector spaces generated by $|0\rangle$ and $\langle0|$ subject to
\begin{align}
\phi_m|0\rangle=\langle0|\phi_{-m}=0,~~m<0.
\end{align}

One can decompose $\mathcal{F}_{\phi}$ and $\mathcal{F}^*_{\phi}$ into a direct sum
\begin{align*}
\mathcal{F}_{\phi}=\bigoplus_{i\in\{0,1\}}\mathcal{F}^{(i)}_{\phi},~~\mathcal{F}^*_{\phi}=\bigoplus_{i\in\{0,1\}}\mathcal{F}^{*(i)}_{\phi},
\end{align*}
where $\mathcal{F}^{(i)}_{\phi}$ and $\mathcal{F}^{*(i)}_{\phi}$ are the subspaces generated by the action of all neutral fermion monomials of length $i \mod 2$ on $|0\rangle$ and $\langle0|$, respectively.
For fixed $M$, we introduce
\begin{align}
\label{ne1}\mathbf{\tilde{B}}(x)=&\exp\left(\frac{1}{2}x(-1)^M\phi_{-M+1}\phi_M\right)\exp\left(\frac{1}{2}x(-1)^{M-1}\phi_{-M+2}\phi_{M-1}\right)\cdots\\
\notag&\exp\left(\frac{1}{2}x(-1)^{-M+2}\phi_{M-1}\phi_{-M+2}\right)\exp\left(\frac{1}{2}x(-1)^{-M+1}\phi_M\phi_{-M+1}\right)\\
\notag=&\mathop{\overrightarrow\prod}\limits_{j\in [-M+1,M]}\exp\left(\frac{1}{2}x(-1)^j\phi_{-j+1}\phi_j\right),\\
\label{ne2}\mathbf{\tilde{C}}(x)=&\exp\left(\frac{1}{2}x(-1)^{-M}\phi_{M-1}\phi_{-M}\right)\exp\left(\frac{1}{2}x(-1)^{-M+1}\phi_{M-2}\phi_{-M+1}\right)\cdots \\ \notag&\exp\left(\frac{1}{2}x(-1)^{M-2}\phi_{-M+1}\phi_{M-2}\right)\exp\left(\frac{1}{2}x(-1)^{M-1}\phi_{-M}\phi_{M-1}\right)\\
\notag=&\mathop{\overleftarrow\prod}\limits_{i\in [-M+1,M]}\exp\left(\frac{1}{2}x(-1)^{i-1}\phi_{-i}\phi_{i-1}\right).
\end{align}
\par The vector $\tau\in \mathcal{F}_{\phi}$  is called a BKP tau function\cite{KRR2013,Whe2012} if
\begin{align*}
\sum_{i\in \mathbb{Z}}(-1)^{i}\phi_i\otimes \phi_{-i}(\tau\otimes \tau)=\phi_0\tau\otimes \phi_0\tau.
\end{align*}

It is clear that
$\mathbf{\tilde{B}}(x_1)\cdots\mathbf{\tilde{B}}(x_N)|0\rangle$ is a BKP tau function.
Using the similar method of Prop. \ref{pro1}, we have
\begin{proposition} For any fixed $M\in \mathbb N$, one has that
\begin{align}
\mathbf{\tilde{B}}(x)=\exp\left(\sum_{n\in 2\mathbb{N}+1}\frac{1}{n}x^n\Lambda_n\right),~~\mathbf{\tilde{C}}(x)=\exp\left(\sum_{n\in 2\mathbb{N}+1}\frac{1}{n}x^n\Lambda^*_n\right),
\end{align}
where
\begin{align}
\Lambda_n=\frac{1}{2}\sum^M_{-M+n}(-1)^j\phi_{-j+n}\phi_j,~~\Lambda^*_n=\frac{1}{2}\sum^{M-n}_{-M}(-1)^j\phi_{-j-n}\phi_j.
\end{align}
\end{proposition}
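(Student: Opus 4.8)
The plan is to mimic the proof of Proposition \ref{pro1} almost verbatim, replacing the $bc$-fermion bilinears $b(-i)c(i-1)$ by the neutral-fermion bilinears $\tfrac{1}{2}(-1)^j\phi_{-j+1}\phi_j$ and keeping careful track of two bookkeeping complications absent in the $bc$ case: the factor $(-1)^j$ attached to each exponent, and the fact that the product in \eqref{ne1} runs over the \emph{finite} window $j\in[-M+1,M]$ rather than $(-\infty,M]$, so the ``$m\to-\infty$'' limit at the end of Proposition \ref{pro1} is instead just the natural endpoint $m=-M+1$. The key structural input is that $[\phi_a\phi_b,\phi_c\phi_d]$ is again a linear combination of quadratic monomials in the $\phi$'s (since the $\phi$'s satisfy a Clifford/Heisenberg-type relation $\{\phi_m,\phi_n\}=2(-1)^m\delta_{m+n,0}$), so the BCH series closes on the span of the $\phi_a\phi_b$, exactly as the $b(i)c(j)$ spanned a Heisenberg-type Lie algebra in Section \ref{se1}.

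First I would record the basic commutator: from $\{\phi_m,\phi_n\}=2(-1)^m\delta_{m+n,0}$ one gets $[\phi_{-j+1}\phi_j,\phi_{-i+1}\phi_i]=2(-1)^{j}\bigl(\delta_{i,j-1}\,\phi_{-i+1}\phi_{j}-\delta_{i,j+1}\,\phi_{-j+1}\phi_{i}\bigr)$ or the analogous closed formula (I would compute this carefully once), which shows that, just as in Step~2 of Proposition \ref{pro1}, adjacent factors in the ordered product fail to commute only in a ``one-step'' fashion. Then I would prove by downward induction on $m$ the finite-window analogue of \eqref{bc5}, namely
\begin{align*}
\mathop{\overrightarrow\prod}\limits_{j\in[m,M]}\exp\!\left(\tfrac{1}{2}x(-1)^j\phi_{-j+1}\phi_j\right)=\exp\!\left(\sum_{n\geq 1}\frac{x^n}{n}\cdot\frac{1}{2}\sum_{j=?}^{M}(-1)^{?}\phi_{-j+n}\phi_j\right),
\end{align*}
with the summation ranges and signs to be pinned down so that the $n$th term matches $\tfrac12\sum(-1)^j\phi_{-j+n}\phi_j$ over the appropriate window; the inductive step is precisely the computation in Step~2 of Proposition \ref{pro1}, invoking Lemma \ref{le2} (the identities $\sum_j C_jF^j_n=\tfrac{1}{n+1}$) to resum the iterated-commutator coefficients $[X^{(s)},Y]$ into the single geometric-type series $\sum_n \tfrac{x^n}{n}$. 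Only odd $n$ survive because the neutral-fermion relation forces $\phi_{-j+n}\phi_j=-(-1)^{n}\phi_{-j+n}\phi_j$-type cancellations when $n$ is even (the two halves of the symmetric sum cancel, exactly as in \eqref{eq5} where $[\wedge_m,\wedge_n]=0$); making this parity cancellation explicit is the one genuinely new point relative to Proposition \ref{pro1}. Setting $m=-M+1$ then yields $\mathbf{\tilde B}(x)=\exp\bigl(\sum_{n\in 2\mathbb N+1}\tfrac1n x^n\Lambda_n\bigr)$ with $\Lambda_n=\tfrac12\sum_{-M+n}^{M}(-1)^j\phi_{-j+n}\phi_j$, and the statement for $\mathbf{\tilde C}(x)$ follows by the same argument (or by a symmetry/adjoint argument swapping the roles of the two endpoints, which is why $\Lambda^*_n$ runs from $-M$ to $M-n$).

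The main obstacle I anticipate is purely combinatorial rather than conceptual: getting every sign $(-1)^j$ and every index shift $-j+n$ to land correctly through the induction, since the neutral-fermion bilinears are \emph{antisymmetric} ($\phi_a\phi_b=-\phi_b\phi_a+2(-1)^a\delta_{a+b,0}$) unlike the $bc$ bilinears, so a commutator $[\phi_{-j+1}\phi_j,\,\cdot\,]$ produces terms of \emph{both} signs and one must verify they reorganize into the claimed single sum; and one must check the boundary terms at $j=M$ and $j=-M+n$ do not spoil the range. I would handle this by first doing the base case $m=M-1$ by hand (two factors, one nontrivial commutator, as in Proposition \ref{pro1}'s base case), then the inductive step with the iterated commutators $[X^{(s)},Y]$ written out explicitly and Lemma \ref{le2} applied term by term, and finally verifying the even-$n$ terms vanish by pairing $j\leftrightarrow n-j$ in the symmetric sum. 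Everything else is a transcription of Section \ref{se1}.
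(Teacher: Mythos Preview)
Your proposal is correct and takes essentially the same approach as the paper: the paper's own proof is the single sentence ``Using the similar method of Prop.~\ref{pro1}, we have'', so your plan to rerun the BCH induction of Proposition~\ref{pro1} with the neutral-fermion bilinears, invoking Lemma~\ref{le2} for the resummation and then checking the parity cancellation that kills even $n$, is exactly what is intended. Your identification of the combinatorial bookkeeping (extra $(-1)^j$, the finite symmetric window $[-M+1,M]$ in place of $(-\infty,M]$, and the antisymmetry $\phi_a\phi_b=-\phi_b\phi_a$ for $a+b\ne0$) as the only genuine new work is accurate; note in particular that the factor at $j$ and the factor at $1-j$ in \eqref{ne1} coincide, giving the product a palindromic structure that underlies the odd-$n$ restriction.
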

\begin{remark}\label{RE2}
Under $M\rightarrow\infty$, let $\lambda_n=\Lambda^*_n, ~\lambda_{-n}=\Lambda_n,~n\geq 1$, we have that
\begin{align}\label{e:comm}
[\lambda_m,\lambda_n]=2m\delta_{m,-n}.
\end{align}
\end{remark}
The following result is clear.
\begin{proposition} For any fixed $M$, we have that
\begin{align}
\mathbf{\tilde{B}}(x)\mathbf{\tilde{B}}(y)=\mathbf{\tilde{B}}(y)\mathbf{\tilde{B}}(x),~~~~\mathbf{\tilde{C}}(x)\mathbf{\tilde{C}}(y)=\mathbf{\tilde{C}}(y)\mathbf{\tilde{C}}(x).
\end{align}
\end{proposition}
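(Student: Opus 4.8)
The plan is to deduce the result from the exponential formulas of the preceding proposition, reducing everything to the vanishing of the commutators $[\Lambda_m,\Lambda_n]$ and $[\Lambda^*_m,\Lambda^*_n]$ for odd $m,n\geq 1$. By that proposition, $\mathbf{\tilde{B}}(x)=\exp\big(\sum_{n\in 2\mathbb N+1}\frac{1}{n}x^n\Lambda_n\big)$ and $\mathbf{\tilde{C}}(x)=\exp\big(\sum_{n\in 2\mathbb N+1}\frac{1}{n}x^n\Lambda^*_n\big)$; for fixed $M$ each $\Lambda_n$ is a finite sum and vanishes once $n>2M$, so each exponent is a finite linear combination of the $\Lambda_n$ (resp. $\Lambda^*_n$) and no convergence issue arises. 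Hence it suffices to show that any two $\Lambda_m,\Lambda_n$ commute (and likewise any two $\Lambda^*_m,\Lambda^*_n$): then the two exponents appearing in $\mathbf{\tilde{B}}(x)\mathbf{\tilde{B}}(y)$ commute, whence the claimed identity, and similarly for $\mathbf{\tilde{C}}$.

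To compute $[\Lambda_m,\Lambda_n]$ I would use the exact operator identity
\[
[\phi_a\phi_b,\phi_c\phi_d]=\{\phi_b,\phi_c\}\phi_a\phi_d-\{\phi_a,\phi_c\}\phi_b\phi_d+\{\phi_b,\phi_d\}\phi_c\phi_a-\{\phi_a,\phi_d\}\phi_c\phi_b
\]
together with $\{\phi_p,\phi_q\}=2(-1)^p\delta_{p+q,0}$. Expanding $[\Lambda_m,\Lambda_n]=\frac{1}{4}\sum_{i,j}(-1)^{i+j}[\phi_{m-i}\phi_i,\phi_{n-j}\phi_j]$ and using the four anticommutators to remove one summation index in each of the four resulting pieces, each piece collapses, after an obvious reindexing, to a scalar multiple of the single sum $S:=\sum_{k}(-1)^k\phi_{(m+n)-k}\phi_k$, where in all four pieces the index $k$ runs over the same interval $[m+n-M,M]$ (this is where the hypotheses $m,n\geq 1$ and the defining ranges of $\Lambda_m,\Lambda_n$ enter). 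Now $S=0$: since $m+n$ is even the interval $[m+n-M,M]$ is invariant under the reflection $k\mapsto (m+n)-k$, under which $(-1)^k\phi_{(m+n)-k}\phi_k$ goes to $(-1)^k\phi_k\phi_{(m+n)-k}=-(-1)^k\phi_{(m+n)-k}\phi_k$ (using $m+n\neq 0$), so $S=-S$. Alternatively, one checks directly that the four scalar coefficients are $+2,+2,-2,-2$ and cancel. Either way $[\Lambda_m,\Lambda_n]=0$, which is the exact analog of \eqref{eq5}. The vanishing $[\Lambda^*_m,\Lambda^*_n]=0$ follows either by the same computation with $n$ replaced by $-n$, or by applying the algebra automorphism $\phi_k\mapsto\phi_{-k}$, which preserves the relations $\{\phi_m,\phi_n\}=2(-1)^m\delta_{m+n,0}$ and carries $\Lambda_n$ to $\Lambda^*_n$.

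I do not expect a genuine obstacle here: the only step requiring any care is the bookkeeping of summation limits in the second paragraph, i.e.\ checking that intersecting each Kronecker-delta constraint with the intervals $[-M+m,M]$ and $[-M+n,M]$ produces the same interval $[m+n-M,M]$ in all four terms, so that $[\Lambda_m,\Lambda_n]$ is \emph{exactly} (not merely asymptotically, cf. Remark \ref{RE2}) zero for every fixed $M$. This is entirely routine, which is why the result is essentially immediate.
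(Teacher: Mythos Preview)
Your proof is correct and follows exactly the approach the paper intends: the paper states this proposition with no proof beyond ``The following result is clear,'' relying implicitly on the same argument it spells out for the $bc$-fermion analog (Proposition~2.2 via \eqref{eq5}), namely reducing to $[\Lambda_m,\Lambda_n]=0$ through the exponential formula of the preceding proposition. Your explicit verification of the commutator, including the careful check that all four Kronecker-delta pieces land on the common index interval $[m+n-M,M]$, simply fills in what the paper leaves to the reader.
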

A partition $\tilde{\nu}=(\tilde{\nu}_1, \ldots, \tilde{\nu}_l)$ is called {\it strict} if $\tilde{\nu}_1>\cdots>\tilde{\nu}_l$. Define the basis vectors
\begin{equation}\label{ch4-eq1}
|\tilde{\nu})=\begin{cases}
\phi_{\tilde{\nu}_1}\phi_{\tilde{\nu}_2}\cdots \phi_{\tilde{\nu}_{\ell}}|0\rangle ~~&~~ \ell\ \mathrm{even}\\
\phi_{\tilde{\nu}_1}\phi_{\tilde{\nu}_2}\cdots \phi_{\tilde{\nu}_{\ell}}\phi_0|0\rangle ~~&~~\ell\ \mathrm{odd},
\end{cases}
\end{equation}
\begin{equation}\label{ch4-eq2}
(\tilde{\nu}|=\begin{cases}
\langle0|\phi_{-\tilde{\nu}_1}\phi_{-\tilde{\nu}_2}\cdots \phi_{-\tilde{\nu}_{\ell}} ~~&~~ \ell\ \mathrm{even}\\
\langle0|\phi_0\phi_{-\tilde{\nu}_1}\phi_{-\tilde{\nu}_2}\cdots \phi_{-\tilde{\nu}_{\ell}} ~~&~~\ell\ \mathrm{odd}.
\end{cases}
\end{equation}

Recall that the Schur $Q$-function $Q_{\tilde{\nu}}$ associated to the strict partition $\tilde{\nu}$ \cite{Mac1995} is defined by
$$Q_{\tilde{\nu}}(x_1, \cdots, x_n)
=2^{l}\sum_{w\in S_n/S_{n-l}}x_{w(1)}^{\tilde{\nu}_1}\cdots
x_{w(n)}^{\tilde{\nu}_n} \prod_{i<j}\frac{x_{w(i)}+x_{w(j)}}{x_{w(i)}-x_{w(j)}},
$$
where $n\geq l=l(\tilde{\nu})$, $S_n$ acts on $x_1,\cdots, x_n$ and $S_{n-l}$ acts on the last $n-l$ variables.
\begin{theorem}\label{th5}  For any fixed $M$ and strict partition $\tilde{\nu}$ of length $l(\tilde{\nu})$, one has that
\begin{align}
\mathbf{\tilde{B}}(x)|\tilde{\nu})&=\sum_{\tilde{\nu}\prec \tilde{\mu}}2^{\#(\tilde{\mu}|\tilde{\nu})-l(\tilde{\mu})+l(\tilde{\nu})}x^{|\tilde{\mu}|-|\tilde{\nu}|}|\tilde{\mu}),\\
(\tilde{\nu}|\mathbf{\tilde{C}}(x)&=\sum_{\tilde{\nu}\prec \tilde{\mu}}2^{\#(\tilde{\mu}|\tilde{\nu})-l(\tilde{\mu})+l(\tilde{\nu})}x^{|\tilde{\mu}|-|\tilde{\nu}|}(\tilde{\mu}|,
\end{align}
where $\#(\tilde{\mu}|\tilde{\nu})$ denotes the number of parts in $\tilde{\mu}$ that are not in $\tilde{\nu}$.
\end{theorem}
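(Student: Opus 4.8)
The plan is to follow the method of Theorem~\ref{th4} and Corollary~\ref{cor2}: linearize each exponential factor by nilpotency, expand the ordered product, and read off which basis vectors $|\tilde{\mu})$ survive and with what coefficient. Two preliminary observations set this up. First, since $(-j+1)+j=1\ne 0$ we have $\{\phi_{-j+1},\phi_j\}=0$, while $\phi_m^2=(-1)^m\delta_{m,0}$; hence $(\phi_{-j+1}\phi_j)^2=-\phi_{-j+1}^2\phi_j^2=0$ for \emph{every} $j$ (for $j=0,1$ one of the two squares equals $1$, but the other vanishes). So $\exp\!\big(\tfrac12 x(-1)^j\phi_{-j+1}\phi_j\big)=1+\tfrac12 x(-1)^j\phi_{-j+1}\phi_j$, and $\mathbf{\tilde{B}}(x)|\tilde{\nu})$ becomes a finite sum over strictly decreasing index strings $M\ge j_1>\cdots>j_s\ge -M+1$ of the monomials $(x/2)^s(-1)^{j_1+\cdots+j_s}\,\phi_{-j_1+1}\phi_{j_1}\cdots\phi_{-j_s+1}\phi_{j_s}\,|\tilde{\nu})$. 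Second, anticommuting $\phi_{1-j}$ past $\phi_j$ and using $(-1)^{1-j}=-(-1)^{-j}$ shows that the factors of $\mathbf{\tilde{B}}(x)$ indexed by $j$ and by $1-j$ are \emph{the same} operator; since $j\mapsto 1-j$ is a fixed-point-free involution of $[-M+1,M]$, the $2M$ factors fall into $M$ identical pairs placed symmetrically about the centre of the product, and this pairing is what will produce the powers of $2$.

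Next I would work out the action of a single bilinear $\phi_{-j+1}\phi_j$ on a basis vector. For $j\ge 2$ it slides one part from $j-1$ up to $j$ (annihilating the vector when there is no part $j-1$), producing the scalar $\{\phi_{-j+1},\phi_{j-1}\}=2(-1)^{1-j}$ from the contraction together with a reordering sign. The indices $j=0,1$ are exceptional because $\phi_0^2=1$: there the bilinear instead inserts the part $1$, with a sign depending on $l(\tilde{\nu})\bmod 2$ through the two cases of \eqref{ch4-eq1}. Iterating in the order prescribed by the string, a block of chosen indices that reaches $2$ through one of $\{0,1\}$ creates a new part and slides it up, a block of consecutive indices avoiding $\{0,1\}$ slides one part of $\tilde{\nu}$ up by several boxes, and separated blocks act on separated parts. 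Hence a monomial survives exactly when it carries $\tilde{\nu}$ to some strict $\tilde{\mu}$ with $\tilde{\nu}\prec\tilde{\mu}$, which forces $s=|\tilde{\mu}|-|\tilde{\nu}|$; this pins down the support of $\mathbf{\tilde{B}}(x)|\tilde{\nu})$ and the exponent of $x$.

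It remains to sum the contributions to a fixed $|\tilde{\mu})$ with $\tilde{\nu}\prec\tilde{\mu}$. An elementary count of common parts gives $\#(\tilde{\mu}|\tilde{\nu})-l(\tilde{\mu})+l(\tilde{\nu})=\#(\tilde{\nu}|\tilde{\mu})$, so after extracting the factor $(x/2)^{|\tilde{\mu}|-|\tilde{\nu}|}$ the statement reduces to the identity
\[
\sum_{\text{strings}\,\to\,\tilde{\mu}}(\pm 1)\,2^{\,r}\;=\;2^{\,\#(\tilde{\nu}|\tilde{\mu})+|\tilde{\mu}|-|\tilde{\nu}|},
\]
where $r$ denotes the number of fermionic contractions performed by the string. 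This I expect to be the main obstacle. The intended bookkeeping: the skew strip $\tilde{\mu}/\tilde{\nu}$ breaks into horizontal runs, and a string reaching $\tilde{\mu}$ amounts to a choice, made independently on each run, of the ordered block of indices sliding one part through that run; for a run that moves an existing part of $\tilde{\nu}$ the smallest index of its block may be replaced by its mirror under $j\mapsto 1-j$ (a free binary choice), while for a run that creates a new part the binary choice is whether its block starts at $0$ or at $1$. Each box of $\tilde{\mu}/\tilde{\nu}$ then contributes exactly one contraction, hence a factor $2$, except the creation box of each new part, whose factor $2$ comes instead from that binary choice; and each of the $\#(\tilde{\nu}|\tilde{\mu})$ runs that carry a part of $\tilde{\nu}$ contributes one further factor $2$ from its binary choice — altogether $2^{|\tilde{\mu}|-|\tilde{\nu}|}\cdot 2^{\#(\tilde{\nu}|\tilde{\mu})}$, as wanted. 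The last and most delicate step is to check that the product of $(-1)^{\sum j_k}$ with all contraction and reordering signs is $+1$ on every surviving string; this, with the precise matching of strings to runs, is just the fermionic shadow of Macdonald's branching (Pieri) rule for Schur's $Q$-functions. Finally, the formula for $(\tilde{\nu}|\mathbf{\tilde{C}}(x)$ follows from that for $\mathbf{\tilde{B}}(x)$ by the transpose argument, exactly as Corollary~\ref{cor2} followed from Theorem~\ref{th4}.
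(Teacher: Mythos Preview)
The paper does not write out a proof of this theorem; it is stated in Section~4 with the implicit understanding that one argues as in Theorem~\ref{th4}. Your plan follows exactly that route, and your preliminary observations are correct: each exponential linearises because $(\phi_{-j+1}\phi_j)^2=0$ for all $j$, and the involution $j\mapsto 1-j$ does pair the $2M$ factors into $M$ identical ones. So the overall strategy is the right one.

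There is, however, a genuine gap in your counting. You assert that the strings reaching a fixed $\tilde\mu$ are parametrised by \emph{independent} binary choices, one for each run, and that the number of runs moving an existing part equals $\#(\tilde\nu|\tilde\mu)$. Neither claim is correct when parts of $\tilde\nu$ are ``tight''. Take $\tilde\nu=(2,1)$ and $\tilde\mu=(3,2)$: both parts of $\tilde\nu$ move, so there are two existing-part runs, yet $\#(\tilde\nu|\tilde\mu)=1$. And the choices are not independent: the slide $2\to 3$ via $j=3$ acts \emph{after} either option for the slide $1\to 2$, so one would momentarily need two copies of $\phi_2$, which is zero. A direct check shows only the two subsets $\{-2,2\}$ and $\{-2,-1\}$ survive and each contributes $x^2|3,2)$, giving $2x^2$ as required. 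The correct mechanism is that a run moving a part $\tilde\nu_i$ with $\tilde\nu_i\in\tilde\mu$ (i.e.\ $\tilde\nu_i=\tilde\mu_{i+1}$) is \emph{forced} to use the mirrored index for its first step, while a run with $\tilde\nu_i\notin\tilde\mu$ genuinely has the binary choice; this is what produces the factor $2^{\#(\tilde\nu|\tilde\mu)}$, not an independent choice per run. Your accounting happens to land on the right exponent, but for the wrong reason, and the bijection between strings and choices needs to be redone with this constraint in mind.

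Finally, you flag the sign verification as the delicate step and leave it open. That is honest, but it is a real gap: in the neutral case the contraction $\{\phi_{-j+1},\phi_{j-1}\}=2(-1)^{1-j}$ carries its own sign, the two mirrored options for a step have opposite $(-1)^j$, and the parity of $l(\tilde\nu)$ enters through the $\phi_0$ in \eqref{ch4-eq1}. You will need to track these carefully (and the forcing phenomenon above interacts with the signs), rather than appeal to the Pieri rule you are effectively trying to prove.
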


Our result is summarized as follows.
\begin{theorem} For a fixed $M\in\mathbb N$, $\{x\}=\{x_1,\cdots, x_{N}\}$ and $\{y\}=\{y_1,\cdots, y_{N}\}$, we have that
\begin{align}
&\mathbf{\tilde{B}}(x_1)\cdots\mathbf{\tilde{B}}(x_N)|0\rangle=\sum_{\tilde{\mu}\in [N,M]}2^{-\ell(\tilde{\mu})}Q_{\tilde{\mu}}\{x\}|\tilde{\mu}),\\
&\langle 0|\mathbf{\tilde{C}}(x_N)\cdots\mathbf{\tilde{C}}(x_1)=\sum_{\tilde{\mu}\in [N,M]}2^{-\ell(\tilde{\mu})}Q_{\tilde{\mu}}\{x\}(\tilde{\mu}|,\\
\label{eq6}&\langle 0|\mathbf{\tilde{C}}(x_N)\cdots\mathbf{\tilde{C}}(x_1)\mathbf{\tilde{B}}(y_1)\cdots\mathbf{\tilde{B}}(y_N)|0\rangle=
\sum_{\tilde{\mu}\in [N,M]}2^{-\ell(\tilde{\mu})}Q_{\tilde{\mu}}\{x\}Q_{\tilde{\mu}}\{y\},
\end{align}
where $\tilde{\mu}$ runs through all strict partitions inside the box $[N, M]$ of height $N$ and width $M$. 
\end{theorem}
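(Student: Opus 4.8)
The plan is to prove the three formulas in parallel with the $bc$-fermion case treated in Section \ref{se1}, following the blueprint of Theorem \ref{th4}, Corollary \ref{cor2}, Proposition \ref{e:genSchur} and Theorem \ref{th3}, but with Schur functions replaced by Schur $Q$-functions. First I would establish the action formula $\mathbf{\tilde{B}}(x_1)\cdots\mathbf{\tilde{B}}(x_N)|0\rangle=\sum_{\tilde{\mu}\in[N,M]}2^{-\ell(\tilde{\mu})}Q_{\tilde{\mu}}\{x\}|\tilde{\mu})$ by induction on $N$, using Theorem \ref{th5} as the one-step relation: applying $\mathbf{\tilde{B}}(x_N)$ to $\sum 2^{-\ell(\tilde{\nu})}Q_{\tilde{\nu}}\{\bar x\}|\tilde{\nu})$ and collecting the coefficient of a fixed $|\tilde{\mu})$ produces $\sum_{\tilde{\nu}\prec\tilde{\mu}}2^{-\ell(\tilde{\nu})}2^{\#(\tilde{\mu}|\tilde{\nu})-\ell(\tilde{\mu})+\ell(\tilde{\nu})}Q_{\tilde{\nu}}\{\bar x\}x_N^{|\tilde{\mu}|-|\tilde{\nu}|}$, which must equal $2^{-\ell(\tilde{\mu})}Q_{\tilde{\mu}}\{x\}$; this is exactly the branching rule for Schur $Q$-functions, i.e. $Q_{\tilde{\mu}}\{x_1,\dots,x_N\}=\sum_{\tilde{\nu}\prec\tilde{\mu}}2^{\#(\tilde{\mu}|\tilde{\nu})}Q_{\tilde{\nu}}\{x_1,\dots,x_{N-1}\}x_N^{|\tilde{\mu}|-|\tilde{\nu}|}$, which is standard (Macdonald III.8, Ex.\ 3). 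The dual statement for $\mathbf{\tilde{C}}(x)$ follows by the same induction using the second half of Theorem \ref{th5}, or by transposing.

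Next, for \eqref{eq6} I would combine the two expansions with the bilinear pairing. One needs the orthogonality of the basis vectors $\{|\tilde{\mu})\}$ and $\{(\tilde{\nu}|\}$ with respect to $\langle\ |\ \rangle$; because neutral fermions satisfy $\{\phi_m,\phi_n\}=2(-1)^m\delta_{m+n,0}$, the pairing is not simply $\delta_{\tilde{\mu},\tilde{\nu}}$ but carries a normalization factor — computing $(\tilde{\mu}|\tilde{\mu})$ by moving the $\phi_{-\tilde{\mu}_i}$'s past the $\phi_{\tilde{\mu}_j}$'s yields $2^{\ell(\tilde{\mu})}$ (with an extra factor from the $\phi_0$'s in the odd-length case that one must track carefully, using $\phi_0^2=1$ up to sign). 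Plugging the two sums into $\langle 0|\mathbf{\tilde{C}}(x_N)\cdots\mathbf{\tilde{C}}(x_1)\mathbf{\tilde{B}}(y_1)\cdots\mathbf{\tilde{B}}(y_N)|0\rangle$ then gives $\sum_{\tilde{\mu}\in[N,M]}2^{-\ell(\tilde{\mu})}Q_{\tilde{\mu}}\{x\}\cdot 2^{-\ell(\tilde{\mu})}Q_{\tilde{\mu}}\{y\}\cdot 2^{\ell(\tilde{\mu})}=\sum_{\tilde{\mu}\in[N,M]}2^{-\ell(\tilde{\mu})}Q_{\tilde{\mu}}\{x\}Q_{\tilde{\mu}}\{y\}$, which is the claim.

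The main obstacle I expect is bookkeeping the powers of $2$ and the sign/degeneracy issues coming from $\phi_0$: the definitions \eqref{ch4-eq1}--\eqref{ch4-eq2} distinguish the parity of $\ell(\tilde{\mu})$, and when $\ell(\tilde{\nu})$ and $\ell(\tilde{\mu})$ have different parities the insertion of a new part toggles the presence of $\phi_0$, so the exponent $\#(\tilde{\mu}|\tilde{\nu})-\ell(\tilde{\mu})+\ell(\tilde{\nu})$ in Theorem \ref{th5} must be reconciled with the $2^{\#(\tilde{\mu}|\tilde{\nu})}$ in the $Q$-function branching rule precisely through the $2^{-\ell}$ normalization in the basis vectors. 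I would verify this matches by checking the two parity cases separately (new part interlaced at the end vs.\ in the interior), and confirm the $[N,M]$ truncation is automatic since $Q_{\tilde{\mu}}\{x_1,\dots,x_N\}=0$ once $\ell(\tilde{\mu})>N$ and $\mathbf{\tilde{B}}(x)$ only creates modes with index $\le M$. Once these are pinned down the computation is routine and parallels Section \ref{se1} verbatim.
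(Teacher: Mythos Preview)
Your proposal is correct and follows exactly the approach the paper intends: Section 4 is explicitly declared parallel to Section \ref{se1}, and this theorem is presented as a summary without a written-out proof, the argument being the neutral-fermion analogue of Proposition \ref{e:genSchur} and Theorem \ref{th3}. Your inductive use of Theorem \ref{th5} together with the Schur $Q$-function branching rule, followed by the pairing computation for \eqref{eq6}, is precisely the intended route; your caveat about tracking the powers of $2$ and the $\phi_0$ parity carefully is well placed, since with the definitions \eqref{ch4-eq1}--\eqref{ch4-eq2} the pairing $(\tilde{\mu}|\tilde{\mu})$ also carries sign factors $(-1)^{\sum \tilde{\mu}_i}$ from $\{\phi_{-m},\phi_m\}=2(-1)^m$ that must be reconciled with the coefficients in the expansion.
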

\begin{remark} The last identity \eqref{eq6} can also be realized as hypergeometric tau functions in \cite[Eq. (1.2.9)]{Or2003}.
\end{remark}
\begin{remark} Using the Heisenberg relation \eqref{e:comm}, it is easy to derive that under $M\rightarrow\infty$
\begin{align}
\langle 0|\mathbf{\tilde{C}}(x_N)\cdots\mathbf{\tilde{C}}(x_1)\mathbf{\tilde{B}}(y_1)\cdots\mathbf{\tilde{B}}(y_N)|0\rangle
=\prod^N_{i,j=1}\frac{1+x_iy_j}{1-x_iy_j},
\end{align}
which together with \eqref{eq6} gives another proof of the well-known Schur's identity \cite{Mac1995}:
\begin{align}
\sum_{l(\tilde{\mu})\leq N}2^{-\ell(\tilde{\mu})}Q_{\tilde{\mu}}\{x\}Q_{\tilde{\mu}}\{y\}=\prod^N_{i,j=1}\frac{1+x_iy_j}{1-x_iy_j},
\end{align}
where $\tilde{\mu}$ runs through all strict partitions with length $\leq N$, $\{x\}=\{x_1,\cdots, x_{N}\}$, and $\{y\}=\{y_1,\cdots, y_{N}\}$. 
\end{remark}
\begin{appendix}
\section{} This appendix gives a detailed proof of Theorem \ref{th2}. We start with an easy lemma.
\begin{lemma}\label{le1}
The constant term of the function $(1-\frac{a_1}{z})^{-1}\cdots (1-\frac{a_n}{z})^{-1}(1-bz)^{-1}$
is $(1-a_1b)^{-1}\cdots (1-a_nb)^{-1}$, and the constant term of
$(c_1+\frac{a_1}{z})^{-1}\cdots (c_n+\frac{a_n}{z})^{-1}(1+bz)^{-1}$ is
$(c_1-a_1b)^{-1}\cdots (c_n-a_nb)^{-1}$.
\end{lemma}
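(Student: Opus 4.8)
The plan is to read ``constant term'' as the coefficient of $z^0$ in the Laurent expansion valid in a suitable annulus, and then to reduce everything to the elementary geometric series $(1-w)^{-1}=\sum_{k\ge 0}w^k$. The one point that needs care---and which I would state explicitly at the outset---is the choice of expansion region, since the two types of factors must be developed in opposite directions: each $(1-a_i/z)^{-1}$ is expanded in nonnegative powers of $1/z$, while $(1-bz)^{-1}$ is expanded in nonnegative powers of $z$. Concretely I would work in the annulus $\max_i|a_i|<|z|<|b|^{-1}$, where all $n+1$ geometric series converge simultaneously, so that the product is a single absolutely convergent Laurent series whose $z^0$-coefficient is unambiguous.

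For the first identity I would write $(1-a_i/z)^{-1}=\sum_{k_i\ge0}a_i^{k_i}z^{-k_i}$ and $(1-bz)^{-1}=\sum_{m\ge0}b^m z^m$, multiply the $n+1$ series together, and record that the general monomial carries the factor $z^{m-(k_1+\cdots+k_n)}$. The coefficient of $z^0$ is then the sum, over all $(k_1,\dots,k_n)$ with $m=k_1+\cdots+k_n$, of $a_1^{k_1}\cdots a_n^{k_n}b^{m}$. The key observation is that this single linear constraint lets the sum factor over the index $i$: it equals $\prod_{i=1}^n\sum_{k_i\ge0}(a_ib)^{k_i}=\prod_{i=1}^n(1-a_ib)^{-1}$, which is exactly the claimed value.

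For the second identity I would proceed identically after normalizing each factor as $(c_i+a_i/z)^{-1}=c_i^{-1}\sum_{k_i\ge0}(-a_i/c_i)^{k_i}z^{-k_i}$ and $(1+bz)^{-1}=\sum_{m\ge0}(-b)^m z^m$; here the two sign conventions combine per index, since $(-a_i/c_i)^{k_i}(-b)^{k_i}=(a_ib/c_i)^{k_i}$, so extracting $z^0$ gives $\prod_i c_i^{-1}\prod_i(1-a_ib/c_i)^{-1}=\prod_i(c_i-a_ib)^{-1}$. Alternatively, and perhaps more cleanly, I would deduce the second statement from the first by the substitution $a_i\mapsto-a_i/c_i,\ b\mapsto-b$ followed by rescaling both sides by $\prod_i c_i^{-1}$, which turns $(1-a_i/z)^{-1}$ into $(c_i+a_i/z)^{-1}$, turns $(1-bz)^{-1}$ into $(1+bz)^{-1}$, and turns $(1-a_ib)^{-1}$ into $c_i(c_i-a_ib)^{-1}$.

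Since every manipulation is an absolutely convergent rearrangement of geometric series in the chosen annulus, there is no genuine analytic obstacle here; the only thing one must get right is the bookkeeping of expansion directions, which is precisely why I would pin down the annulus before expanding. If a reader prefers an integral formulation, the same conclusion follows from $\mathrm{const.\ term}=\frac1{2\pi i}\oint_{|z|=r}f(z)\,\frac{dz}{z}$ with $\max_i|a_i|<r<|b|^{-1}$ and a residue computation, but the direct series expansion is shorter and, unlike the residue route, does not require the $a_i$ to be distinct.
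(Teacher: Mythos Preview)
Your argument is correct: expanding each factor as a geometric series in the appropriate direction and collecting the $z^0$-coefficient gives the claimed products, and your care about the annulus of convergence (or, equivalently, a formal Laurent series convention) is exactly what makes the bookkeeping legitimate. The paper does not actually prove this lemma---it is introduced as ``an easy lemma'' and stated without proof---so your direct series computation is precisely the sort of elementary verification the authors are implicitly invoking.
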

\begin{proposition}\footnote{Professor Stanley informed us that the proposition may be seen as a special case of the MacMahon Master Theorem.
 We provide a direct proof for completeness.}
Let $T=(T_{ij})_{1\leq i,j\leq n}$ and $Z_{k}=\sum_{1\leq i\leq n}T_{ik}\frac{z_k}{z_i}~(1\leq k\leq n)$. Then the constant term $CT(A)$ of the multivariate function $A$ in the $z_i: A=\prod^n_{k=1}\frac{1}{Z_k}$
is equal to ${\det T}^{-1}$.
\end{proposition}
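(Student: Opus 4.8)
The plan is to prove the identity by induction on $n$, peeling off one variable at a time with Lemma~\ref{le1} and finishing with the Schur‑complement formula for determinants. Throughout, I would fix the expansion convention that each factor $1/Z_k$ is expanded with the diagonal term $T_{kk}$ dominant, i.e.
\begin{align*}
\frac{1}{Z_k}=\frac{1}{T_{kk}}\sum_{m\ge 0}(-1)^m\Bigl(\frac{1}{T_{kk}}\sum_{i\ne k}T_{ik}\frac{z_k}{z_i}\Bigr)^{m},
\end{align*}
so that $A=\prod_k 1/Z_k$ is a genuine formal power series in the off‑diagonal entries with Laurent‑polynomial coefficients in the $z_i$, and $CT(A)$ is a well‑defined element of the localization of the polynomial ring at the diagonal entries; the claimed equality with $1/\det T$ is then an identity in that ring. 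The base case $n=1$ is immediate: $Z_1=T_{11}$, so $CT(A)=1/T_{11}=1/\det T$.

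For the inductive step I would single out $z_1$. The variable $z_1$ occurs in the numerator only in $1/Z_1$, where $Z_1=T_{11}+z_1\sum_{i=2}^{n}T_{i1}/z_i$, and in the denominator only in the factors $1/Z_j$ with $j\ge 2$, where $Z_j=c_j+a_j/z_1$ with $c_j=\sum_{i=2}^{n}T_{ij}z_j/z_i$ and $a_j=T_{1j}z_j$; moreover $1/Z_1$ is expanded in nonnegative powers of $z_1$ and each $1/Z_j$ ($j\ge 2$) in nonpositive powers of $z_1$, which is exactly the setup of Lemma~\ref{le1}. Pulling the scalar $1/T_{11}$ out of $1/Z_1$ and applying the second part of that lemma with $z=z_1$, $b=T_{11}^{-1}\sum_{i=2}^{n}T_{i1}/z_i$ yields
\begin{align*}
CT_{z_1}(A)=\frac{1}{T_{11}}\prod_{j=2}^{n}\frac{1}{\,c_j-a_jb\,},\qquad
c_j-a_jb=\sum_{i=2}^{n}\Bigl(T_{ij}-\frac{T_{i1}T_{1j}}{T_{11}}\Bigr)\frac{z_j}{z_i}.
\end{align*}
Writing $S=(S_{ij})_{2\le i,j\le n}$ with $S_{ij}=T_{ij}-T_{i1}T_{1j}/T_{11}$ for the Schur complement of $T_{11}$ in $T$, we recognize $c_j-a_jb=\sum_{i=2}^{n}S_{ij}z_j/z_i$; that is, $CT_{z_1}(A)$ equals $1/T_{11}$ times the exact analogue of $A$ built from the $(n-1)\times(n-1)$ matrix $S$ in the variables $z_2,\dots,z_n$.

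Applying the induction hypothesis to $S$ gives $CT_{z_2,\dots,z_n}\bigl(CT_{z_1}(A)\bigr)=\dfrac{1}{T_{11}\det S}$, and the Schur‑complement identity $\det T=T_{11}\det S$ then yields $CT(A)=1/\det T$, completing the induction. The step I expect to demand the most care is the bookkeeping around the meaning of "constant term": one must keep the diagonal‑dominant expansion fixed throughout, verify that the hypotheses of Lemma~\ref{le1} are literally met by the factors (numerator vs.\ denominator occurrence of $z_1$), and make sure iterating the one‑variable constant term is legitimate — which is why it is convenient to work over the localization at the diagonal entries, where the rational entries of $S$ cause no trouble. A conceptually cleaner alternative, which I would mention but not adopt, is the substitution $w_i=1/z_i$, under which $A=(w_1\cdots w_n)\big/\prod_k(T^{\top}w)_k$ and $CT(A)$ becomes the Grothendieck residue $\mathrm{Res}_0\,\dfrac{dw_1\wedge\cdots\wedge dw_n}{\prod_{k}(T^{\top}w)_k}=1/\det T$, which is in the same spirit as the MacMahon Master Theorem noted in the footnote.
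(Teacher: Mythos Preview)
Your proof is correct and follows essentially the same route as the paper's: induction on $n$, eliminate one variable via Lemma~\ref{le1}, identify the resulting $(n-1)\times(n-1)$ system with the Schur complement, and conclude by the Schur-complement determinant identity. The only cosmetic difference is that you peel off $z_1$ while the paper peels off $z_n$; your added remarks on the expansion convention and the residue interpretation are welcome clarifications but do not change the argument.
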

\begin{proof}
We use induction on n. $n=1$ is clear. 
For $1\leq k\leq n$, set $Z^*_{k}=\sum_{1\leq i\leq n-1}T_{ik}\frac{1}{z_i}$, then $$Z_{k}=\left\{
\begin{aligned}
Z^*_{k}z_k+\frac{T_{nk}z_k}{z_n},~~~~& ~~~~1\leq k\leq n-1\\
T_{nn}\left(1+\frac{Z^*_nz_n}{T_{nn}}\right),~~~~&~~~~k=n
\end{aligned}
\right.
$$
so $A$ can be written as
\begin{align}
A=T^{-1}_{nn}\left(Z^*_1z_1+\frac{T_{n1}z_1}{z_n}\right)^{-1}\cdots \left(Z^*_{n-1}z_{n-1}+\frac{T_{nn-1}z_{n-1}}{z_n}\right)^{-1}\left(1+\frac{Z^*_{n}}{T_{nn}}z_n\right)^{-1}.
\end{align}
\par As a function of $z_n$, it follows from Lemma \ref{le1} that CT(A) is
\begin{align}
A^{\prime}=T^{-1}_{nn}\left(Z^{*}_{1}z_1-\frac{T_{n1}z_1Z^{*}_{n}}{T_{nn}}\right)^{-1}\left(Z^{*}_{2}z_2-\frac{T_{n2}z_2Z^{*}_{n}}{T_{nn}}\right)^{-1}\cdots \left(Z^{*}_{n-1}z_{n-1}-\frac{T_{nn-1}z_{n-1}Z^{*}_{n}}{T_{nn}}\right)^{-1}.
\end{align}
Put $T=(T_{ij})_{1\leq i,j\leq n}$ and $T^{\prime}_{ij}=T_{ij}-\frac{T_{in}T_{nj}}{T_{nn}}~(1\leq i,j\leq n-1)$ and $Z^{\prime}_{k}=\sum_{1\leq i\leq n-1}T^{\prime}_{ik}\frac{z_k}{z_i}$, then
\begin{align}
A^{\prime}=T^{-1}_{nn}\prod^{n-1}_{k=1}\frac{1}{Z^{\prime}_k}.
\end{align}
\par Using the induction hypothesis, we have that
\begin{align}
CT(A)=\frac{1}{T_{nn}}CT(A^{\prime})=\frac{1}{T_{nn}}\frac{1}{\det T^{\prime}},
\end{align}
where $T^{\prime}=(T^{\prime}_{ij})_{1\leq i,j\leq n-1}$. Clearly $\det T=T_{nn}\det T^{\prime}$, 
we have shown
the proposition.
\end{proof}
For a positive integer $M$, let
\begin{align*}
&D_m=\sum_{1\leq n\leq N}s_{(m,1^{n-1})}\{x\}\varphi_{n-1},~1\leq m\leq M, ~\{x\}=\{x_1,\dots,x_N\},\\
&F_m=\sum_{1\leq n\leq N}s_{(m,1^{n-1})}\{y\}\varphi^*_{1-n},1\leq m\leq M, ~\{y\}=\{y_1,\dots,y_N\}.
\end{align*}
Then
\begin{align*}
T_{ij}=[D_i,F_j]=\sum_{1\leq n\leq N}s_{(i,1^{n-1})}\{x\}s_{(j,1^{n-1})}\{y\},
\end{align*}

Let $T=(T_{ij})_{1\leq i,j\leq M}$, and consider the functions $Z_{j}=\sum_{1\leq i\leq n}(\delta_{ij}+T_{ij})\frac{z_j}{z_i}$.
By (\ref{B3}), (\ref{B2}) and (\ref{bc4}), we have that
\begin{align}
\notag&\langle 0|\mathbf{C}^{*}(x_N)\cdots\mathbf{C}^{*}(x_1)\mathbf{B}^{*}(y_1)\cdots \mathbf{B}^{*}(y_N)|0\rangle\\
\notag=&\langle 0|\sum_{k_m\geq 0}\prod^{M}_{m=1}\frac{\left(D_m(-1)^{m-1}\varphi^*_m\right)^{k_m}}{k_m!}\sum_{l_m\geq 0}\prod^{M}_{m=1}\frac{\left(F_m(-1)^{m-1}\varphi_{-m}\right)^{l_m}}{l_m!}|0\rangle\\
\notag=&\sum_{k_m\geq 0}\langle 0|\prod^{M}_{m=1}\frac{(D_m)^{k_m}}{k_m!}\prod^{M}_{m=1}\frac{(F_m)^{k_m}}{k_m!}|0\rangle(-1)^{\sum^{M}_{m=1} k_m}\prod^{M}_{m=1}k_{m}!\\
\notag=&\sum_{k_m\geq 0}\langle 0|\prod^{M}_{m=1}\frac{(-D_m)^{k_m}}{k_m!}\prod^{M}_{m=1}(F_m)^{k_m}|0\rangle\\
\notag=&CT\left(\langle 0|\prod^{M}_{i=1}\exp\left(-\frac{D_i}{z_i}\right)\prod^{M}_{j=1}\frac{1}{1-z_jF_j}|0\rangle\right)=CT\left(\prod^M_{j=1}\frac{1}{Z_j}\right)=\frac{1}{\det (I+T)}\\
\notag=&\frac{1}{\sum\limits_{\mu\subseteq[N,M]}s_\mu\{x\}s_\mu\{y\}}.
\end{align}

\section{The phase model}

 The vector space $\mathcal{V}$ (resp. dual space $\mathcal{V}^*$)
 is defined as the linear span of all states of the whole lattice with the bases
 \begin{align}
\text{Basis}(\mathcal{V})&=\Big{\{}|\uline{n}\rangle=|n_0\rangle_0\otimes |n_1\rangle_1\otimes\cdots\otimes|n_M\rangle_M; n_i\in\mathbb Z_+\Big{\}},\\
\text{Basis}(\mathcal{V}^*)&=\Big{\{}\langle \uline{m}|=\langle m_0|_0\otimes \langle m_1|_1\otimes\cdots\otimes\langle m_M|_M; m_i\in\mathbb Z_+\Big{\}}.
\end{align}
The \textit{phase algebra} \cite{Bog2005,BIK1997} is generated by $\{\pi_i,\psi^\dag_i,\mathcal{N}_i,\psi_i\}_{0\leq i\leq M}$ subject to the following relations:   
 \begin{gather}
 [\psi_i,\psi^\dag_j]=\delta_{i,j}\pi_i,~[\mathcal{N}_i,\psi_j]=-\delta_{i,j}\psi_i,~[\mathcal{N}_i,\psi^\dag_j]=\delta_{i,j}\psi^\dag_i,~\psi_i\pi_i=\pi_i\psi^\dag_i=0, ~ 0\leq i,j\leq M.
 \end{gather}

 They act on the Fock space $\mathcal V$ naturally as annihilation ($\psi_i$) and creation ($\psi_i^{\dagger}$) operators as follows.
 \begin{align*}
&\psi_i|n_0\rangle_0\otimes\cdots\otimes|n_M\rangle_M=\delta_{n_i-1\geq 0}|n_0\rangle_0\otimes\cdots\otimes|n_i-1\rangle_i\otimes\cdots\otimes|n_M\rangle_M,\\
&\psi^\dag_i|n_0\rangle_0\otimes\cdots\otimes|n_M\rangle_M=|n_0\rangle_0\otimes\cdots\otimes|n_i+1\rangle_i\otimes\cdots\otimes|n_M\rangle_M,\\
&\mathcal{N}_i|n_0\rangle_0\otimes\cdots\otimes|n_M\rangle_M=n_i|n_0\rangle_0\otimes\cdots\otimes|n_M\rangle_M,\\
&\pi_i|n_0\rangle_0\otimes\cdots\otimes|n_M\rangle_M=\delta_{n_i,0}|n_0\rangle_0\otimes\cdots\otimes|n_M\rangle_M,
\end{align*}
where $\delta_{n_i-1\geq 0}=1$ if $n_i-1\geq 0$ and $0$ otherwise. Similarly the action on the dual space $\mathcal V^*$ can be defined
accordingly.
The $L$-matrix for the phase model on the space $\mathcal V_1\otimes \mathcal V_2$ ($\mathcal V_i\simeq \mathcal V$)
has the form
\begin{align}
L_{im}(x)=\begin{pmatrix}
x^{-\frac12}&\psi^\dag_m\\
\psi_m&x^{\frac12}
\end{pmatrix},
\end{align}
which satisfies the RLL equation $R(x, y)L_{1m}(x)L_{2m}(y)=L_{2m}(y)L_{1m}(x)R(x, y)$, where the R-matrix is given by
\begin{align}\label{e:Rmat1}
R(x,y)=\begin{pmatrix}
x&0&0&0\\
0&0&x^{\frac12}y^{\frac12}&0\\
0&x^{\frac12}y^{\frac12}&x-y&0\\
0&0&0&x
\end{pmatrix}.
\end{align}

The monodromy matrix $T(x)$ also satisfies the RLL relation and is of the form
\begin{align}
T(x)=L_M(x)\dots L_0(x)=
\begin{pmatrix}
A(x)&B(x)\\
C(x)&D(x)
\end{pmatrix},
\end{align}
where $L_{m}=L_{im}$ ($i=1, 2$) and the off-diagonal operators satisfy
\begin{align}
[B(x),B(y)]=[C(x),C(y)]=0.
\end{align}

Denote $\mathbb{B}(x)=x^{\frac{M}{2}}B(x)$ and $\mathbb{C}(x)=x^{\frac{M}{2}}C(\frac{1}{x})$. Let
$\mathcal{M}_{bc}:\mathcal{V}\rightarrow \mathcal{F}^{(0)}$ and $\mathcal{M}^*_{bc}:\mathcal{V}^*\rightarrow \mathcal{F}^{*(0)}$ be the linear maps defined by $
\mathcal{M}_{bc}(|\uline{n}\rangle)=|\nu), \ \ \mathcal{M}^*_{bc}(\langle \uline{n}|)=(\nu|$,
where $|\nu)=|M^{n_M},\dots,1^{n_1})\in \mathcal{F}^{(0)}$ (cf. \ref{bc14}) and $(\nu|=(M^{n_M},\dots,1^{n_1}|\in \mathcal{F}^{(0)}$(cf. \ref{bc15}). Note that these maps are not one-to-one since they are insensitive to the value of $n_0$.
\begin{proposition}\cite{Bog2005,Whe2012}\label{pro4}
If $\mathcal{M}_{bc}(|\uline{n}\rangle)=|\nu)$, $\mathcal{M}^*_{bc}(\langle \uline{n}|)=(\nu|$, then
\begin{align}
\label{ch4-ph-eq2}\mathcal{M}_{bc}\mathbb{B}(x)|\uline{n}\rangle=\sum_{\nu\prec \mu\subseteq [l+1,M]}x^{|\mu|-|\nu|}|\mu),\\
\label{ch4-ph-eq3}\langle \uline{n}|\mathbb{C}(x)\mathcal{M}^*_{bc}=\sum_{\nu\prec \mu\subseteq [l+1,M]}x^{|\mu|-|\nu|}(\mu|.
\end{align}
\end{proposition}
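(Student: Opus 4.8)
The plan is to deduce Proposition \ref{pro4} from Theorem \ref{th4} and Corollary \ref{cor2} by establishing a commutative diagram that identifies the phase-model operators $\mathbb{B}(x),\mathbb{C}(x)$ with the $bc$-fermionic operators $\mathbf{B}(x),\mathbf{C}(x)$ through the maps $\mathcal{M}_{bc},\mathcal{M}^*_{bc}$. Concretely, writing $|\nu)=\mathcal{M}_{bc}(|\uline{n}\rangle)$ and $(\nu|=\langle\uline{n}|\mathcal{M}^*_{bc}$, I would first prove the intertwining relations
\[
\mathcal{M}_{bc}\bigl(\mathbb{B}(x)|\uline{n}\rangle\bigr)=\mathbf{B}(x)|\nu),\qquad
\bigl(\langle\uline{n}|\mathbb{C}(x)\bigr)\mathcal{M}^*_{bc}=(\nu|\mathbf{C}(x),
\]
and then simply invoke \eqref{bc7} and Corollary \ref{cor2} to get the interlacing expansions on the right-hand sides.

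To establish the first relation I would expand the monodromy matrix $T(x)=L_M(x)\cdots L_0(x)$ and read off $B(x)=T(x)_{12}$ as a sum over paths in the two-dimensional auxiliary space: each path from channel $1$ on the left to channel $2$ on the right yields a monomial $\psi^{\dag}_{i_1}\psi_{j_1}\psi^{\dag}_{i_2}\cdots\psi^{\dag}_{i_k}$ with strictly nested indices $M\geq i_1>j_1>\cdots>j_{k-1}>i_k\geq 0$, weighted by a power of $x^{\pm 1/2}$ coming from the ``stay'' moves; after multiplying by $x^{M/2}$ this weight collapses to $x^{(i_1+\cdots+i_k)-(j_1+\cdots+j_{k-1})}$. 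Using the phase-algebra relations $\psi_i\pi_i=\pi_i\psi^{\dag}_i=0$ and $[\psi_i,\psi^{\dag}_j]=\delta_{ij}\pi_i$, when such a monomial acts on $|\uline{n}\rangle$ it raises the occupations $n_{i_1},\dots,n_{i_k}$ and lowers $n_{j_1},\dots,n_{j_{k-1}}$, and survives precisely when every lowering is legal; because the indices are strictly nested, the net move on $\nu$ is exactly an interlacing (horizontal-strip) move $\nu\prec\mu$ with $|\mu|-|\nu|=(i_1+\cdots+i_k)-(j_1+\cdots+j_{k-1})$, the site-$0$ contribution $\psi^{\dag}_0$ being identified with the trivial move $\mu=\nu$ of weight $x^0$. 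On the other side, $\mathbf{B}(x)=\mathop{\overrightarrow\prod}\limits_{i}\bigl(1+xb(-(i-1))c(i-2)\bigr)$ acts on $|\nu)$ by the very same horizontal-strip moves, as was worked out in the proof of Theorem \ref{th4}, where the bilinears $b(-j)c(j-1)$ perform the box-adding operations. Matching the two sides term by term gives the commutativity, and then \eqref{bc7} finishes the first formula; the $\mathbb{C}(x)$ statement follows from the transposed computation on $\mathcal{V}^*$ (with the substitution $x\mapsto 1/x$ absorbed into the $x^{M/2}$ prefactor) together with Corollary \ref{cor2}.

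The main obstacle will be the bookkeeping in the second step: extracting the closed form of $B(x)$ (and of $C(x)$) from the $2\times 2$ monodromy product, and above all checking that the $x$-grading produced by the ``stay'' factors $x^{\pm 1/2}$ together with the prefactor $x^{M/2}$ reassembles exactly into $x^{|\mu|-|\nu|}$, while keeping track of the fact that $\mathcal{M}_{bc}$ is insensitive to $n_0$, so that distinct phase-model states get identified and the corresponding monomials must be summed consistently. Once the dictionary ``$\psi^{\dag}_i\leftrightarrow$ box of size $i$'' together with the vanishing conditions coming from $\psi_i\pi_i=0$ is aligned with the combinatorial analysis in the proof of Theorem \ref{th4}, the remaining verification is routine.
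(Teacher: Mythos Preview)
Your detailed computation is correct, but the logical packaging is inverted relative to the paper, and this makes your argument look circular even though it is not.

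In the paper, Proposition~\ref{pro4} is not proved at all: it is quoted from \cite{Bog2005,Whe2012}. The paper then \emph{uses} Proposition~\ref{pro4} together with Theorem~\ref{th4} and Corollary~\ref{cor2} to deduce the commutative diagrams (the proposition immediately following). So the flow is
\[
\text{Prop.~\ref{pro4} (cited)} \ + \ \text{Thm.~\ref{th4}, Cor.~\ref{cor2}} \ \Longrightarrow \ \text{commutative diagrams},
\]
whereas you propose to run it backwards: first establish the intertwining $\mathcal{M}_{bc}\mathbb{B}(x)=\mathbf{B}(x)\mathcal{M}_{bc}$, then apply Theorem~\ref{th4}. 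The problem is that your way of establishing the intertwining is precisely to expand $T(x)_{12}$ as a sum over paths, read off that each surviving monomial $\psi^{\dag}_{i_1}\psi_{j_1}\cdots\psi^{\dag}_{i_k}$ implements an interlacing move $\nu\prec\mu$ with weight $x^{|\mu|-|\nu|}$, and then observe that $\mathbf{B}(x)$ does the same thing. But the first half of that sentence, once you apply $\mathcal{M}_{bc}$, \emph{is} Proposition~\ref{pro4}. The comparison with $\mathbf{B}(x)$ and the appeal to \eqref{bc7} are superfluous: you have already reached the right-hand side of \eqref{ch4-ph-eq2} before ever mentioning Theorem~\ref{th4}.

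So: the substantive content of your proof --- the path expansion of the monodromy matrix, the nested-index structure $M\ge i_1>j_1>\cdots>i_k\ge 0$, the weight bookkeeping $x^{M/2}\cdot x^{(s_2-s_1)/2}=x^{\sum i_a-\sum j_b}$, and the identification of the $i_k=0$ term with $\psi^{\dag}_0$ as the ``do nothing to $\nu$'' contribution --- is exactly the direct argument of the cited references, and it is correct. Just drop the detour through Theorem~\ref{th4}: state the path expansion, apply $\mathcal{M}_{bc}$, and you are done. The commutative diagram is then a corollary, as in the paper, not an input.
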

The following result gives a correspondence between $\mathbf{B}(x), \mathbf{C}(x)$ and $\mathbb B(x), \mathbb C(x)$. It can be seen
from Theorem \ref{th4}, Corollary \ref{cor2} and Proposition \ref{pro4}.

\begin{proposition}
The following commutative diagrams hold.
\begin{displaymath}
\xymatrix{\mathcal{V} \ar[r]^{\mathcal{M}_{bc}} \ar[d]_{\mathbb{B}(x)}& \mathcal{F}^{(0)} \ar[d]^{\mathbf{B}(x)} \\
          \mathcal{V} \ar[r]_{\mathcal{M}_{bc}} & \mathcal{F}^{(0)} }
\xymatrix{\mathcal{V}^* \ar[r]^{\mathcal{M}^*_{bc}} \ar[d]_{\mathbb{C}(x)}& \mathcal{F}^{*(0)} \ar[d]^{\mathbf{C}(x)} \\
          \mathcal{V}^* \ar[r]_{\mathcal{M}^*_{bc}} & \mathcal{F}^{*(0)} }
\end{displaymath}
\end{proposition}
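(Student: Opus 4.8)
The plan is to verify each square by evaluating its two composite paths on an arbitrary basis vector and matching the outputs, all of the needed formulas being already available: Theorem \ref{th4}, Corollary \ref{cor2}, and Proposition \ref{pro4}.

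For the left diagram, take $|\uline{n}\rangle = |n_0\rangle_0\otimes\cdots\otimes|n_M\rangle_M\in\mathcal V$ and put $|\nu) = \mathcal M_{bc}(|\uline{n}\rangle) = |M^{n_M},\dots,1^{n_1})$, a vector of $\mathcal F^{(0)}$ with $\nu_1\le M$ and length $l = l(\nu)$. Along the path that first applies the top edge $\mathcal M_{bc}$ and then the right edge $\mathbf B(x)$ one obtains $\mathbf B(x)\,\mathcal M_{bc}(|\uline{n}\rangle) = \mathbf B(x)|\nu)$; since $M\ge\nu_1$, Theorem \ref{th4} evaluates this as $\sum_{\nu\prec\mu\subseteq[l+1,M]}x^{|\mu|-|\nu|}|\mu)$. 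Along the path that first applies the left edge $\mathbb B(x)$ and then the bottom edge $\mathcal M_{bc}$ one obtains $\mathcal M_{bc}\,\mathbb B(x)|\uline{n}\rangle$, which is precisely $\sum_{\nu\prec\mu\subseteq[l+1,M]}x^{|\mu|-|\nu|}|\mu)$ by Proposition \ref{pro4}. The two expressions agree on every basis vector, hence on all of $\mathcal V$, which is the commutativity of the first square.

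The right diagram is handled dually. For $\langle\uline{n}|\in\mathcal V^*$ set $(\nu| = \mathcal M^*_{bc}(\langle\uline{n}|)$. The path through the top and right edges produces $(\nu|\mathbf C(x)$, which Corollary \ref{cor2} identifies with $\sum_{\nu\prec\mu\subseteq[l+1,M]}x^{|\mu|-|\nu|}(\mu|$; the path through the left and bottom edges produces $\mathcal M^*_{bc}\bigl(\langle\uline{n}|\mathbb C(x)\bigr)$, written $\langle\uline{n}|\mathbb C(x)\mathcal M^*_{bc}$ in Proposition \ref{pro4}, which equals the same sum. Equality of the two composites follows, establishing the second square.

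There is essentially no obstacle here, since everything reduces to comparing two previously proved generating-function expansions; the content of the proposition is really just the observation that Theorem \ref{th4} and Corollary \ref{cor2} produce exactly the formulas appearing in Proposition \ref{pro4}. The only points deserving a word of care are: (i) that the hypothesis $M\ge\nu_1$ of Theorem \ref{th4} is automatically met for every $|\nu)$ in the image of $\mathcal M_{bc}$ — indeed such a $\nu$ has all parts $\le M$ by the very definition of $\mathcal M_{bc}$; and (ii) that although $\mathcal M_{bc}$ and $\mathcal M^*_{bc}$ are not injective (they forget the occupation number $n_0$ of the $0$-th lattice site), this does not obstruct the commutativity, since the diagrams are checked on basis vectors and all the resulting elements of $\mathcal F^{(0)}$ and $\mathcal F^{*(0)}$ are manifestly independent of $n_0$.
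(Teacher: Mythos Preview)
Your proof is correct and follows exactly the approach the paper indicates: the paper merely states that the proposition ``can be seen from Theorem \ref{th4}, Corollary \ref{cor2} and Proposition \ref{pro4},'' and you have simply spelled out this comparison on basis vectors. Your additional remarks about the hypothesis $M\ge\nu_1$ being automatic and about the non-injectivity of $\mathcal M_{bc}$ being harmless are reasonable clarifications but not essential.
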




\begin{remark} Using Propositions \ref{pro1}-\ref{pro2} we get another proof of the following result, which is
fundamental in previous study of the phase model in the literature.
\begin{align}
\lim_{M\rightarrow \infty}\mathcal{M}_{bc}\mathbb{B}(x)|\uline{n}\rangle=\exp\left(\sum^{\infty}_{n=1}\frac{x^n}{n}H_{-n}\right)|\nu),
\end{align}
where $\mathcal{M}_{bc}(|\uline{n}\rangle)=|\nu)$.
\end{remark}

\section{The $i$-boson model}
 The vector space $\tilde{\mathcal{V}}$ and its dual $\tilde{\mathcal{V}}^*$ are defined respectively as the linear spans of the bases
 \begin{align}
\text{Basis}(\tilde{\mathcal{V}})&=\Big{\{}|\uline{\tilde{n}}\rangle=|n_0\rangle_0\otimes |n_1\rangle_1\otimes\cdots\otimes|n_M\rangle_M; n_0\geq 0, n_i=0,1\Big{\}},\\
\text{Basis}(\tilde{\mathcal{V}}^*)&=\Big{\{}\langle \uline{\tilde{m}}|=\langle m_0|_0\otimes \langle m_1|_1\otimes\cdots\otimes\langle m_M|_M;
m_0\geq 0, m_i=0,1\Big{\}},
\end{align}

The \textit{$i$-boson algebra} \cite{Whe2012} is generated by $\{\tilde{\psi}^\dag_i,\tilde{\mathcal{N}}_i,\tilde{\psi}_i\}_{0\leq i\leq M}$ satisfying the relations:
\begin{align}
[\tilde{\psi}_i,\tilde{\psi}^\dag_j]=\delta_{i,j}(-1)^{\mathcal{N}_i},~~[\tilde{\mathcal{N}}_i,\tilde{\psi}_j]=-\delta_{i,j}\tilde{\psi}_i,~~[\tilde{\mathcal{N}}_i,\tilde{\psi}^\dag_j]
=\delta_{i,j}\tilde{\psi}^\dag_i
\end{align}
for all $0\leq i,j\leq M$.
The $i$-boson algebra naturally acts on $\tilde{\mathcal{V}}$ by
 \begin{align*}
&\tilde{\psi}_j|n_0\rangle_0\otimes\cdots\otimes|n_M\rangle_M=\frac{1}{\sqrt{2}}\delta_{n_j-1\geq 0}|n_0\rangle_0\otimes\cdots\otimes|n_j-1\rangle_j\otimes\cdots\otimes|n_M\rangle_M,~1\leq j\leq M,\\
&\tilde{\psi}_0|n_0\rangle_0\otimes\cdots\otimes|n_M\rangle_M=\frac{1-(-1)^{n_0}}{\sqrt{2}}|n_0-1\rangle_0\otimes|n_1\rangle_1\otimes\cdots\otimes|n_M\rangle_M,
\\
&\tilde{\psi}^\dag_j|n_0\rangle_0\otimes\cdots\otimes|n_M\rangle_M=\frac{1+(-1)^{n_j}}{\sqrt{2}}|n_0\rangle_0\otimes\cdots\otimes|n_j+1\rangle_i
\otimes\cdots\otimes|n_M\rangle_M,~1\leq j\leq M,\\
&\tilde{\psi}^\dag_0|n_0\rangle_0\otimes\cdots\otimes|n_M\rangle_M=\frac{1}{\sqrt{2}}|n_0+1\rangle_0\otimes\otimes|n_1\rangle_1\otimes\cdots\otimes|n_M\rangle_M,\\
&\tilde{\mathcal{N}}_j|n_0\rangle_0\otimes\cdots\otimes|n_M\rangle_M=n_j|n_0\rangle_0\otimes\cdots\otimes|n_M\rangle_M,~0\leq j\leq M.
\end{align*}
The action on $\tilde{\mathcal{V}}^*$ is defined similarly.

The $R$-matrix for the $i$-boson model is given by
\begin{align}
\tilde{R}(x,y)=\begin{pmatrix}
x+y&0&0&0\\
0&y-x&2x^{\frac12}y^{\frac12}&0\\
0&2x^{\frac12}y^{\frac12}&x-y&0\\
0&0&0&x+y
\end{pmatrix}.
\end{align}

The RLL equation $\tilde{R}(x, y)\tilde{L}_1(x)\tilde{L}_2(y)=\tilde{L}_2(y)\tilde{L}_1(x)\tilde{R}(x, y)$ on $\tilde{\mathcal V}\otimes \tilde{\mathcal V}$
is satisfied by the $L$-matrix:
\begin{align}
\tilde{L}_{im}(x)=\begin{pmatrix}
x^{-\frac12}&\sqrt{2}\tilde{\psi}^\dag_m\\
\sqrt{2}\tilde{\psi}_m&x^{\frac12}
\end{pmatrix}, \qquad i=1, 2.
\end{align}

The monodromy matrix $\tilde{T}(x)$ has the form (also satisfying the RLL relation)
\begin{align}
\tilde{T}(x)=\tilde{L}_M(x)\dots \tilde{L}_0(x)=
\begin{pmatrix}
\tilde{A}(x)&\tilde{B}(x)\\
\tilde{C}(x)&\tilde{D}(x)
\end{pmatrix},
\end{align}
where $\tilde{L}_{m}=\tilde{L}_{im}$ and the off-diagonal operators satisfy
\begin{align}
[\tilde{B}(x),\tilde{B}(y)]=[\tilde{C}(x),\tilde{C}(y)]=0.
\end{align}

Denote $\tilde{\mathbb{B}}(x)=x^{\frac{M}{2}}\tilde{B}(x)$ and $\tilde{\mathbb{C}}(x)=x^{\frac{M}{2}}\tilde{C}(\frac{1}{x})$. Let $\mathcal{M}_\phi:\tilde{\mathcal{V}}\rightarrow \mathcal{F}_\phi^{(0)}$ and $\mathcal{M}^*_\phi:\tilde{\mathcal{V}}^*\rightarrow \mathcal{F}_\phi^{*(0)}$ be the two linear maps defined by
\begin{align}
\mathcal{M}_\phi(|\tilde{\uline{n}}\rangle)=2^{-l(\tilde{\nu})}|\tilde{\nu}),~~~~~\mathcal{M}^*_\phi(\langle \tilde{\uline{n}}|)=2^{-l(\tilde{\nu})}(\tilde{\nu}|,
\end{align}
where $|\tilde{\nu})=|M^{n_M},\dots,1^{n_1})$ if $\sum_in_i$ is even and $|\tilde{\nu})=|M^{n_M},\dots,1^{n_1},0)$ if $\sum_in_i$ is odd.
The dual vectors $(\tilde{\nu}|$ is defined similarly.
\begin{proposition}\cite{Bog2005,Whe2012}\label{pro5}
If $\mathcal{M}_\phi(|\tilde{\uline{n}}\rangle)=|\tilde{\nu})$, $\mathcal{M}^*_\phi(\langle \tilde{\uline{n}}|)=(\tilde{\nu}|$, then
\begin{align}
\label{ch4-ph-eq2b}\mathcal{M}_\phi\tilde{\mathbb{B}}(x)|\tilde{\uline{n}}\rangle=\sum_{\tilde{\nu}\prec \tilde{\mu}}2^{\#(\tilde{\mu}|\tilde{\nu})-l(\tilde{\mu})+l(\tilde{\nu})}x^{|\tilde{\mu}|-|\tilde{\nu}|}|\tilde{\mu}),\\
\label{ch4-ph-eq3b}\langle \tilde{\uline{n}}|\tilde{\mathbb{C}}(x)\mathcal{M}^*_\phi=\sum_{\tilde{\nu}\prec \tilde{\mu}}2^{\#(\tilde{\mu}|\tilde{\nu})-l(\tilde{\mu})+l(\tilde{\nu})}x^{|\tilde{\mu}|-|\tilde{\nu}|}(\tilde{\mu}|.
\end{align}
\end{proposition}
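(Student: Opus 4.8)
The plan is to follow the template of the proof of Theorem~\ref{th4}, the new ingredients being the weights $\tfrac12(-1)^j$, the relation $\{\phi_m,\phi_n\}=2(-1)^m\delta_{m+n,0}$, and the powers of $2$ these produce.

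First I would linearize each exponential in \eqref{ne1}. Writing $a_j=\tfrac12 x(-1)^j\phi_{-j+1}\phi_j$, the relation gives $\{\phi_j,\phi_{-j+1}\}=2(-1)^j\delta_{1,0}=0$, hence $a_j^2\propto-\phi_{-j+1}^2\phi_j^2$; since $\phi_{-j+1}^2\neq 0$ only at $j=1$ while $\phi_j^2\neq 0$ only at $j=0$, this vanishes for every $j$ and $\exp(a_j)=1+a_j$. I would also record the symmetry behind the factors of $2$: the index set $[-M+1,M]$ is stable under $j\mapsto 1-j$, and since $\phi_{1-j}=\phi_{-j+1}$ and $\{\phi_j,\phi_{-j+1}\}=0$ one checks $a_{1-j}=a_j$. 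Thus each elementary operator occurs \emph{twice} in
\[
\mathbf{\tilde{B}}(x)=\mathop{\overrightarrow\prod}\limits_{j\in[-M+1,M]}(1+a_j),
\]
a finite product as in Theorem~\ref{th4}.

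Second I would read off the action of a single block on a basis vector $|\tilde\nu)$. From $[\phi_{-j+1}\phi_j,\phi_k]=2(-1)^j\delta_{k,-j}\phi_{-j+1}+2(-1)^j\delta_{k,j-1}\phi_j$ one sees that $\phi_j$ installs a part of size $j$ while $\phi_{-j+1}=\phi_{-(j-1)}$ deletes an existing part $j-1$ through the contraction $\{\phi_{-(j-1)},\phi_{j-1}\}=2(-1)^{j-1}$; the net move pushes a part up by one box, except near the distinguished index $0$ (where $\phi_0^2=1$), which nucleates a genuinely new part and toggles the length--parity distinguishing the two cases of \eqref{ch4-eq1}. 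Expanding the product and retaining exactly $|\tilde\mu|-|\tilde\nu|$ active blocks then yields, precisely as in Theorem~\ref{th4}, the vectors $|\tilde\mu)$ with $\tilde\nu\prec\tilde\mu$ (necessarily $\tilde\mu_1\le M$), the box count being recorded by $x^{|\tilde\mu|-|\tilde\nu|}$.

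The crux, and the step I expect to be the main obstacle, is the scalar coefficient. Four effects combine: the $\tfrac12$ carried by each block, the factor $2(-1)^{j-1}$ produced whenever an existing part is deleted, the identity $\phi_0^2=1$ at the boundary, and the doubling $a_j=a_{1-j}$ which offers two active blocks for each move. I would organize these by induction on $|\tilde\mu|-|\tilde\nu|$. Relocating an existing part to a vacant value is where a new entry of $\tilde\mu$ is born and contributes a full factor $2$ (the $\tfrac12$ of the block cancels the contraction $2$, and the pairing restores a further $2$), accounting for the term $\#(\tilde\mu|\tilde\nu)$ in the exponent; appending a brand--new smallest part routes through $\phi_0$, where the even/odd convention of \eqref{ch4-eq1} and $\phi_0^2=1$ swallow one of these factors, which is exactly the correction $-l(\tilde\mu)+l(\tilde\nu)$. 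Reconciling the signs and the parity conventions at the index $0$ is the delicate part; a careful check on the two elementary moves (illustrated already by $\tilde\nu=\emptyset$, where all coefficients reduce to $1$) feeds the induction to give $2^{\#(\tilde\mu|\tilde\nu)-l(\tilde\mu)+l(\tilde\nu)}$.

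Finally, the converse --- that every strict $\tilde\mu$ with $\tilde\nu\prec\tilde\mu$ does occur, with the stated multiplicity --- follows by writing down, for each such $\tilde\mu$, the explicit ordered chains of blocks carrying $|\tilde\nu)$ to $|\tilde\mu)$, exactly mirroring the operators $E_i$ of Theorem~\ref{th4} and counting the chains that coincide. The identity for $(\tilde\nu|\mathbf{\tilde{C}}(x)$ is then immediate by running the same computation on the oppositely ordered product \eqref{ne2}, or equivalently by passing to the dual pairing, since the blocks defining $\mathbf{\tilde{C}}(x)$ are the transposes of those of $\mathbf{\tilde{B}}(x)$.
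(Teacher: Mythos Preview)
There is a genuine gap: you are proving the wrong statement. Proposition~\ref{pro5} concerns the operator $\tilde{\mathbb{B}}(x)=x^{M/2}\tilde B(x)$, where $\tilde B(x)$ is the $(1,2)$ entry of the monodromy matrix $\tilde T(x)=\tilde L_M(x)\cdots\tilde L_0(x)$ built from the $i$-boson $L$-operators $\tilde L_m(x)=\left(\begin{smallmatrix}x^{-1/2}&\sqrt{2}\tilde\psi_m^\dag\\ \sqrt{2}\tilde\psi_m&x^{1/2}\end{smallmatrix}\right)$ acting on the lattice space $\tilde{\mathcal V}$, followed by the map $\mathcal M_\phi$ into the neutral-fermion Fock space. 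Your entire argument instead unpacks the product \eqref{ne1}, i.e.\ the operator $\mathbf{\tilde B}(x)=\prod_j\exp\bigl(\tfrac12x(-1)^j\phi_{-j+1}\phi_j\bigr)$ acting directly on $|\tilde\nu)\in\mathcal F_\phi^{(0)}$. That is the content of Theorem~\ref{th5}, not of Proposition~\ref{pro5}. The two results have the same right-hand side, but the left-hand sides live on different spaces and involve different operators; they are only linked \emph{a posteriori} by the commutative diagram, which the paper deduces \emph{from} Theorem~\ref{th5} and Proposition~\ref{pro5} together. Your approach therefore neither proves the stated proposition nor can be combined with the diagram without circularity.

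For what it is worth, the paper itself gives no proof of Proposition~\ref{pro5}: it is quoted from \cite{Bog2005,Whe2012}, where it is obtained by expanding the monodromy product $\tilde L_M(x)\cdots\tilde L_0(x)$ and tracking how the $i$-boson creation/annihilation operators $\tilde\psi_m^\dag,\tilde\psi_m$ (with their $(-1)^{\mathcal N_i}$ commutation and the $\tfrac{1}{\sqrt 2}$ normalizations) act on $|\tilde{\uline n}\rangle$. If you want to supply a proof here, that is the computation you need to carry out; the neutral-fermion manipulations you wrote belong to Theorem~\ref{th5}. Separately, even as a sketch for Theorem~\ref{th5}, your derivation of the exponent $\#(\tilde\mu|\tilde\nu)-l(\tilde\mu)+l(\tilde\nu)$ is not yet an argument: the phrase ``counting the chains that coincide'' hides exactly the bookkeeping that produces this specific power of $2$, and the interaction of the doubled factors $a_j=a_{1-j}$ with the $\phi_0$ boundary needs to be made explicit.
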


The following can be seen from Theorem \ref{th5} and Proposition \ref{pro5}.
 \begin{proposition}
The following commutative diagrams hold.
\begin{displaymath}
\xymatrix{\tilde{\mathcal{V}} \ar[r]^{\mathcal{M}_\phi} \ar[d]_{\tilde{\mathbb{B}}(x)}& \tilde{\mathcal{F}}^{(0)} \ar[d]^{\tilde{\mathbf{B}}(x)} \\
          \tilde{\mathcal{V}} \ar[r]_{\mathcal{M}_\phi} & \tilde{\mathcal{F}}^{(0)} }
\xymatrix{\tilde{\mathcal{V}}^* \ar[r]^{\mathcal{M}^*_\phi} \ar[d]_{\tilde{\mathbb{C}}(x)}& \tilde{\mathcal{F}}^{*(0)} \ar[d]^{\tilde{\mathbf{C}}(x)} \\
          \tilde{\mathcal{V}}^* \ar[r]_{\mathcal{M}^*_\phi} & \tilde{\mathcal{F}}^{*(0)} }
\end{displaymath}
\end{proposition}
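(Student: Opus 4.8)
The statement is a diagram chase combining Theorem~\ref{th5} with Proposition~\ref{pro5}, so my plan is simply to assemble these two ingredients. Commutativity of the left square means $\mathcal{M}_\phi\circ\tilde{\mathbb{B}}(x)=\tilde{\mathbf{B}}(x)\circ\mathcal{M}_\phi$ as linear maps $\tilde{\mathcal{V}}\to\tilde{\mathcal{F}}^{(0)}$, and by linearity it suffices to evaluate both composites on an arbitrary basis vector $|\tilde{\uline{n}}\rangle\in\tilde{\mathcal{V}}$. Writing $|\tilde{\nu})=\mathcal{M}_\phi(|\tilde{\uline{n}}\rangle)$ for the corresponding basis vector of $\mathcal{F}_\phi^{(0)}$, I would first apply Proposition~\ref{pro5} to the ``up then right'' route, obtaining
\[
\mathcal{M}_\phi\,\tilde{\mathbb{B}}(x)\,|\tilde{\uline{n}}\rangle=\sum_{\tilde{\nu}\prec\tilde{\mu}}2^{\#(\tilde{\mu}|\tilde{\nu})-l(\tilde{\mu})+l(\tilde{\nu})}\,x^{|\tilde{\mu}|-|\tilde{\nu}|}\,|\tilde{\mu}),
\]
and then apply Theorem~\ref{th5} to the ``left then down'' route, obtaining
\[
\tilde{\mathbf{B}}(x)\,\mathcal{M}_\phi(|\tilde{\uline{n}}\rangle)=\tilde{\mathbf{B}}(x)\,|\tilde{\nu})=\sum_{\tilde{\nu}\prec\tilde{\mu}}2^{\#(\tilde{\mu}|\tilde{\nu})-l(\tilde{\mu})+l(\tilde{\nu})}\,x^{|\tilde{\mu}|-|\tilde{\nu}|}\,|\tilde{\mu}).
\]
The two right-hand sides coincide term by term, so the left square commutes.

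For the right square I would run the same argument on the dual side, where commutativity reads $\mathcal{M}^*_\phi\big(\langle\tilde{\uline{n}}|\tilde{\mathbb{C}}(x)\big)=\big(\mathcal{M}^*_\phi\langle\tilde{\uline{n}}|\big)\tilde{\mathbf{C}}(x)$ for every basis covector $\langle\tilde{\uline{n}}|\in\tilde{\mathcal{V}}^*$. Setting $(\tilde{\nu}|=\mathcal{M}^*_\phi(\langle\tilde{\uline{n}}|)$, the identity \eqref{ch4-ph-eq3b} of Proposition~\ref{pro5} computes the left composite and the second displayed formula of Theorem~\ref{th5} computes the right composite, and both equal $\sum_{\tilde{\nu}\prec\tilde{\mu}}2^{\#(\tilde{\mu}|\tilde{\nu})-l(\tilde{\mu})+l(\tilde{\nu})}x^{|\tilde{\mu}|-|\tilde{\nu}|}(\tilde{\mu}|$; linearity of $\mathcal{M}^*_\phi$ then closes this square as well.

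The only real point of care --- and the step I expect to be the main (if modest) obstacle --- is keeping track of the powers of $2$: the maps $\mathcal{M}_\phi,\mathcal{M}^*_\phi$ are built with normalization $2^{-l(\tilde{\nu})}$, whereas the interlacing sums above carry the combinatorial weight $2^{\#(\tilde{\mu}|\tilde{\nu})-l(\tilde{\mu})+l(\tilde{\nu})}$, so one must verify that these are mutually consistent as $\tilde{\nu}$ grows to $\tilde{\mu}$, i.e.\ that the action of $\tilde{\mathbf{B}}(x)$ on $\mathcal{F}_\phi^{(0)}$ exactly reproduces the normalization that $\mathcal{M}_\phi$ installs on $\tilde{\mathcal{V}}$. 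Once this short bookkeeping is done the two squares commute by inspection, which is why the paper dispatches the proof in one line.
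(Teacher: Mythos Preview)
Your proposal is correct and follows exactly the paper's approach: the paper dispatches the proof in one line by invoking Theorem~\ref{th5} and Proposition~\ref{pro5}, and your argument is precisely the diagram chase that unpacks this. One minor remark: the normalization bookkeeping you flag as the ``main obstacle'' is in fact already absorbed into the statements of Theorem~\ref{th5} and Proposition~\ref{pro5} as written in the paper---both display identical right-hand sides $\sum_{\tilde{\nu}\prec\tilde{\mu}}2^{\#(\tilde{\mu}|\tilde{\nu})-l(\tilde{\mu})+l(\tilde{\nu})}x^{|\tilde{\mu}|-|\tilde{\nu}|}|\tilde{\mu})$---so no further $2$-power accounting is required beyond quoting the two results.
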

\end{appendix}

\end{document}